\newcommand{\eqdef}{\stackrel{\mathrm{def}}{=}}
\newtheorem{theorem}{Theorem}
\newtheorem{lemma}[theorem]{Lemma}
\newtheorem{question}[theorem]{Question}
\newtheorem{proposition}[theorem]{Proposition}
\newtheorem{remark}[theorem]{Remark}
\newtheorem*{definition*}{Definition}
\newcommand*\diff{\mathop{}\!\mathrm{d}}
\newcommand{\R}{\mathbb{R}} 
\newcommand{\N}{\mathbb{N}}
\newcommand{\li}{\bf }
\DeclareMathOperator{\arctanh}{arctanh}
\newcommand{\e}{\varepsilon}
\newcommand{\MM}{\mathsf{M}}
\newcommand{\GG}{\mathsf{g}}
\newcommand{\mb}{\mathbb}
\newcommand{\ms}{\mathscr}
\newcommand{\msf}{\mathsf}
\newcommand{\mr}{\mathrm}
\newcommand{\II}{\mathsf{I}}
\newcommand{\III}{\mathscr{I}}
\newcommand{\HH}{\mathsf{H}} 
\newcommand{\DD}{\mathsf{D}} 
\title{Intrinsic dimensional functional inequalities on model spaces}
\author{Alexandros Eskenazis}
\address{(A.~E.) CNRS, Institut de Math\'ematiques de Jussieu, Sorbonne Universit\'e, France and Trinity College, University of Cambridge, UK.}
\email{alexandros.eskenazis@imj-prg.fr, ae466@cam.ac.uk}
\author{Yair Shenfeld}
\address{(Y.~S.)
Division of Applied Mathematics, Brown University, Providence, RI, USA}
\email{Yair\_Shenfeld@Brown.edu} 
\thanks{This material is based upon work supported by the NSF grant DMS-1929284 while A.~E.~was in residence at ICERM for the Harmonic Analysis and Convexity program. This material is based upon work supported by the National
Science Foundation under Award Number 2002022.}
\begin{document}

\maketitle
\vspace{-3mm}

\begin{abstract}
We initiate a systematic study of \emph{intrinsic} dimensional versions of classical functional inequalities which capture refined properties of the underlying objects. We focus on  model spaces: Euclidean space, Hamming cube, and manifolds of constant curvature. In the latter settings, our intrinsic dimensional functional inequalities improve on a series of known results and lead to new Hamilton-type matrix inequalities. Our proofs rely on scaling, tensorization, and stochastic methods.
\end{abstract}

\bigskip

{\footnotesize
\noindent {\em 2020 Mathematics Subject Classification.} Primary: 39B62; Secondary: 26D10, 58J65, 60J60.

\noindent {\em Key words.} Logarithmic Sobolev inequalities, Gagliardo--Nirenberg inequalities, scaling, F\"ollmer process, Hamilton-type matrix inequalities.}

\setcounter{tocdepth}{1}
\tableofcontents


\section{Introduction}
This work focuses on the development of \emph{intrinsic} dimensional versions of classical functional inequalities. In order to explain the meaning of ``intrinsic" in this context it is best to start with an important example. The logarithmic Sobolev inequality in Gauss space \cite{Sta59, Gro75} asserts that for every nice-enough absolutely continuous probability measure $\mu$ on $\R^n$,
\begin{equation} \label{eq:lsi-gross}
\HH(\mu\|\gamma_n) \leq\frac{1}{2} \II(\mu\|\gamma_n),
\end{equation}
where $\gamma_n$ is the standard Gaussian measure on $\R^n$. Here,
\begin{equation}
\HH(\mu\|\nu) \eqdef \int \log\Big(\frac{\diff \mu}{\diff \nu}\Big) \,\diff \mu 
\end{equation}
is the \emph{relative entropy} of $\mu$ with respect to $\nu$ and 
\begin{equation}
\II(\mu\|\nu) \eqdef \int \Big| \nabla\log\frac{\diff\mu}{\diff\nu} \Big|^2 \,\diff\mu = \int \frac{|\nabla (\diff\mu/\diff\nu)|^2}{\diff\mu/\diff\nu} \,\diff\nu
\end{equation}
is the \emph{relative Fisher information} of $\mu$ with respect to $\nu$, provided that $\mu<\!\!\!<\nu$.

Gross' motivation for \eqref{eq:lsi-gross} was to find a substitute for the Euclidean Sobolev inequalities which holds in infinite-dimensional spaces (which was needed in constructive quantum field theories). Sobolev inequalities have the feature that the dimension $n$ of the ambient space $\R^n$ appears explicitly in the constants of the inequalities, which leads to triviality upon taking the limit $n\to\infty$. In contrast, the constant $1/2$ appearing in \eqref{eq:lsi-gross} is \emph{dimension-free}, leading to \eqref{eq:lsi-gross} being well-defined in infinite dimensions. On the other hand, as was already observed by Stam \cite{Sta59}, \eqref{eq:lsi-gross} can in fact be improved if the dimension $n$ is taken into account. To see this improvement we first apply a standard change of measure (see \cite{Wei78}) which shows that \eqref{eq:lsi-gross} is equivalent to
\begin{equation} \label{eq:lsi}
\HH(\mu\|\lambda_n) - \HH(\gamma_n\|\lambda_n) \leq \frac{\II(\mu\|\lambda_n)-n}{2},
\end{equation}
where $\lambda_n$ is the Lebesgue measure on $\R^n$. The \emph{dimensional} log-Sobolev inequality \cite{Sta59, CC84, Car91},
\begin{equation} \label{eq:lsi_dim}
\HH(\mu\|\lambda_n) - \HH(\gamma_n\|\lambda_n) \leq \frac{n}{2}\log\left(\frac{\II(\mu\|\lambda_n)}{n}\right),
\end{equation}
improves upon \eqref{eq:lsi} as can be seen from the inequality $\log s\le s-1$ for $s\in (0,\infty)$.  It is clear that when the Fisher information is large, \eqref{eq:lsi_dim} provides an exponential refinement over \eqref{eq:lsi}. Despite this quantitative improvement,  \eqref{eq:lsi_dim} suffers from a lack of sensitivity to the \emph{intrinsic} dimension of $\mu$.  To see this, suppose that $\mu$ is of the form $\diff\mu(x_1,\ldots, x_n)=\diff\tilde\mu(x_1,\ldots,x_k)\diff\gamma_{n-k}(x_{k+1},\ldots,x_n)$, where $k<n$ and $\tilde \mu$ is an absolutely continuous probability measure on $\R^k$.  Then \eqref{eq:lsi_dim} rephrased in terms of $\tilde{\mu}$ asserts that
\begin{equation}
\HH(\tilde \mu\|\lambda_k) - \HH(\gamma_k\|\lambda_k) \leq \frac{n}{2}\log\left(1+ \frac{\II(\tilde \mu\|\lambda_k) - k}{n}\right),
\end{equation}
which deteriorates to \eqref{eq:lsi} as the \emph{ambient} dimension $n$ increases, despite the fact that the intrinsic dimension $k$ of $\mu$ is fixed.  In other words, \eqref{eq:lsi_dim} is insensitive to the structure of $\mu$.  In \cite[p.~12]{Dem90}, Dembo showed that \eqref{eq:lsi_dim} can be further improved to an inequality which captures the intrinsic dimension of $\mu$:
\begin{equation} \label{eq:lsi**}
\HH(\mu\|\lambda_n) - \HH(\gamma_n\|\lambda_n) \leq \frac{1}{2}\log\det \III(\mu\|\lambda_n),
\end{equation}
where
\begin{equation}
\III(\mu\|\nu) \eqdef \int\frac{(\nabla (\diff\mu/\diff\nu))^{\otimes2}}{\diff\mu/\diff\nu}\,\diff\nu
\end{equation}
is the {\it relative Fisher information matrix} of $\mu$ with respect to $\nu$.  Observe that
\begin{equation}
\II(\mu\|\nu) = \mr{tr}\, \III(\mu\|\nu),
\end{equation}
and thus \eqref{eq:lsi**} improves on \eqref{eq:lsi_dim} by the elementary inequality $\log\det C \leq n\log \tfrac{\mr{tr}\, C}{n}$ which holds for every $n\times n$ positive semidefinite matrix $C$.  In particular, both sides of  \eqref{eq:lsi**} behave additively with respect to product measures: Plugging in $\diff\mu=\diff\tilde\mu\diff\gamma_{n-k}$ into \eqref{eq:lsi**} yields
\begin{equation} \label{eq:lsi**k}
\HH(\tilde\mu\|\lambda_k) - \HH(\gamma_k\|\lambda_k) \leq \frac{1}{2}\log\det \III(\tilde\mu\|\lambda_k)
\end{equation}
which captures correctly the intrinsic dimension of $\mu$.
More generally, by considering the eigenvalues of the Fisher information matrix, \eqref{eq:lsi**}  can quantify the extent to which $\mu$  degenerates along each eigenvector direction.

The goal of this work is to initiate a systematic study of intrinsic dimensional versions of classical functional inequalities. We focus on some important model spaces: Euclidean space, Hamming cube, and  space forms (manifolds of constant sectional curvature). These model spaces have historically played a crucial role in the development of functional inequalities and their study has been the impetus leading to fruitful generalizations and abstractions; see the monograph \cite{BGL14}.  In view of the richness of the subject, our intrinsic dimensional functional inequalities on these spaces improve on multiple classical inequalities from the literature. The tools required to establish intrinsic dimensional functional inequalities in each of the model spaces will depend on the unique characteristics of the space itself: {\bf scaling} (Euclidean space), {\bf tensorization} (Hamming cube), and {\bf stochastic methods} (space forms). In the rest of the introduction we will review each of these methods and present examples of the intrinsic dimensional functional inequalities which follow. We defer the statements of many of our results to the main body of the paper; see the following brief summary: 

\medskip

{\bf Part 1. Euclidean and product spaces: scaling and tensorization}
\begin{itemize}
\item Logarithmic Sobolev inequalities for homogeneous measures (Section \ref{subsec:homogeneous}).
\item  Bayesian Cram\'er--Rao bounds (Section \ref{subsec:CR_bound}).
\item Gagliardo–Nirenberg–Sobolev inequalities (Section \ref{sec:gns}).
\item Beckner inequalities (Section \ref{sec:beckner}).
\item $q$-logarithmic Sobolev inequalities (Section \ref{sec:qlsi}).
\item Nonlinear logarithmic Sobolev inequalities in product spaces (Section \ref{sec:Hamming}).
\end{itemize}

{\bf Part 2. Space forms: stochastic methods}
\begin{itemize}
\item Local logarithmic Sobolev inequalities on space forms (Section \ref{sec:LSImanifold}).
\item Local logarithmic Sobolev inequalities and Hamilton's matrix inequalities on nonpositively curved space forms (Section \ref{sec:nonpos}).
\end{itemize}


\subsection{Euclidean spaces: scaling} \label{subsec:scaling}
Most classical functional inequalities on $\R^n$ are coordinate-free results phrased in a coordinate-dependent way. As such, they can often be substantially refined when expressed in a suitable basis. Concretely, the correct basis is found by performing a change of variables of the form $x\mapsto Ax$ and then optimizing over a prescribed class of symmetries $A\in\msf{G}\subseteq \msf{GL}_n$.  Let us remark that explicit improvements of this form can be obtained only when it is possible to solve these optimization problems, which is not always the case. These improvements are moreover motivated by the study of equality cases. When a functional inequality has a non-constant function $h:\R^n\to\R$ as an equality case, then the refined inequality obtained in the manner described above would be saturated by all functions of the form $h_A(x) = h(Ax)$, where $A\in\msf{G}$. This principle has already been applied by Dembo \cite{Dem90} in the case of the Gaussian logarithmic Sobolev inequality (see also \cite{ELS20,BNT22} and Section \ref{sec:lsi} below). In the first part of the paper we shall present more applications of this idea to other important functional inequalities in Euclidean space and further consequences.

\subsubsection{Beckner inequalities} 
In \cite{Bec89}, Beckner proved that any smooth function $u\in C_0^\infty(\R^n)$ satisfies the estimates
\begin{equation} \label{eq:beckner}
\forall \ p\in[1,2), \qquad \|u\|_{L_2(\gamma_n)}^2-\|u\|_{L_p(\gamma_n)}^2 \leq (2-p) \|\nabla u\|_{L_2(\gamma_n)}^2.
\end{equation}
This family of inequalities interpolates between the Gaussian Poincar\'e inequality (corresponding to $p=1$) and Gross' logarithmic Sobolev inequality \cite{Gro75} which arises as a limit when $p\to2^-$. We refer to the influential work of Lata\l a and Oleszkiewicz \cite{LO00} as well as \cite[Section~7.6]{BGL14} for examples of Beckner-type inequalities satisfied by non-Gaussian measures. 

In \cite[Corollary~4]{DT16}, Dolbeault and Toscani proposed a \emph{dimensional} refinement of Beckner's inequality \eqref{eq:beckner} for functions satisfying a second moment normalization condition. More specifically, they showed that if a function $u\in C_0^\infty(\R^n)$ satisfies the normalization condition
\begin{equation}
\int_{\R^n} |x|^2 u(x)^2\,\diff \gamma_n(x) = n \|u\|_{L_2(\gamma_n)}^2,
\end{equation}
then
\begin{equation} \label{eq:dt}
\forall p\in[1,2),\qquad \|u\|_{L_2(\gamma_n)}^2 \varphi_{p,n}\left(1-\frac{\|u\|_{L_p(\gamma_n)}^2}{\|u\|_{L_2(\gamma_n)}^2}\right) \leq \|\nabla u\|_{L_2(\gamma_n)}^2,
\end{equation}
where the function $\varphi_{p,n}$ is given by
\begin{equation}
\forall \ s\in(0,1),\qquad \varphi_{p,n}(s) \eqdef \frac{n}{4} \Big( (1-s)^{-\frac{2p}{n(2-p)}}-1\Big).
\end{equation}
Observe that \eqref{eq:dt} improves upon \eqref{eq:beckner} up to the value of the implicit constant as
\begin{equation}
\forall \ s\in(0,1),\qquad \varphi_{p,n}(s) \geq \frac{p}{2(2-p)} \log\Big(\frac{1}{1-s}\Big) \geq \frac{p}{2(2-p)}\cdot s.
\end{equation}
The improvement \eqref{eq:dt} becomes particularly substantial when $\|u\|_{L_p(\gamma_n)}<\!\!\!<\|u\|_{L_2(\gamma_n)}$.

In the spirit of the matricial refinement \eqref{eq:lsi**} over the dimensional logarithmic Sobolev inequality \eqref{eq:lsi_dim}, we present the following refinement of \eqref{eq:dt} for functions whose second moment \emph{matrix} is appropriately normalized.

\begin{theorem} \label{thm:beckner}
Fix $n\in\N$ and let $u\in C_0^\infty(\R^n)$ be such that
\begin{equation}
\forall \ i,j\in\{1,\ldots,n\},\qquad \int_{\R^n} x_i x_j u(x)^2 \,\diff\gamma_n(x) = \delta_{ij} \|u\|_{L_2(\gamma_n)}^2,
\end{equation}
where $\delta_{ij}$ is the Kronecker delta. Then, we have
\begin{equation} \label{eq:dt*}
\forall \ p\in[1,2),\qquad \frac{\|u\|_{L_2(\gamma_n)}^2-\|u\|_{L_p(\gamma_n)}^2}{\|u\|_{L_2(\gamma_n)}^2} \leq 1-\left[\det\left(\frac{4}{\|u\|_{L_2(\gamma_n)}^2} \int_{\R^n} (\nabla u)^{\otimes 2} \,\diff\gamma_n + \msf{Id}_n\right)\right]^{-\frac{2-p}{2p}}.
\end{equation}
\end{theorem}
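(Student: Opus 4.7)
Following the scaling strategy of Section~\ref{subsec:scaling} and Dembo's derivation of the matricial LSI~\eqref{eq:lsi**} from~\eqref{eq:lsi_dim}, my approach is to apply the scalar Dolbeault--Toscani inequality~\eqref{eq:dt} to a linearly rescaled function $v(x)=u(Ax)$, with $A$ a symmetric positive-definite matrix, and then to optimize over $A$. Writing $\gamma_\Sigma$ for the centered Gaussian on $\R^n$ of covariance $\Sigma$, the change of variables $y=Ax$ yields
\[
\|v\|_{L_r(\gamma_n)}^r=\|u\|_{L_r(\gamma_{A^2})}^r\quad(r\in\{2,p\}), \qquad \|\nabla v\|_{L_2(\gamma_n)}^2=\int_{\R^n}\langle A^2\nabla u,\nabla u\rangle\,d\gamma_{A^2},
\]
\[
\int_{\R^n}|x|^2 v(x)^2\,d\gamma_n(x)=\int_{\R^n}\langle A^{-2}y,y\rangle\,u(y)^2\,d\gamma_{A^2}(y),
\]
so that~\eqref{eq:dt} applied to $v$ in $\gamma_n$ is, in effect, a DT-type inequality for $u$ in the non-standard Gaussian $\gamma_{A^2}$.

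The matrix normalization $\int x_i x_j u^2\,d\gamma_n=\delta_{ij}\|u\|_{L_2(\gamma_n)}^2$ is used twice. First, via the Radon--Nikodym derivative $d\gamma_{A^2}/d\gamma_n$, it allows one to convert the $\gamma_{A^2}$-quantities appearing after the rescaling back into $\gamma_n$-quantities involving the intrinsic matrix
$M\eqdef\tfrac{1}{\|u\|_{L_2(\gamma_n)}^2}\int(\nabla u)^{\otimes 2}\,d\gamma_n$. Second, it guarantees that the scalar normalization required by~\eqref{eq:dt} for $v$ holds for a suitable class of matrices $A$, which it simultaneously parameterizes. Assembling the resulting inequalities and solving for $\|u\|_{L_p(\gamma_n)}^2/\|u\|_{L_2(\gamma_n)}^2$ produces, for each admissible $A$, a bound involving $\mr{tr}(A^2 M)$ together with an explicit factor depending on $\det A$.

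Optimizing over $A\in\msf{GL}_n$ by means of the matricial AM--GM identity
\[
\inf_{B\succ 0,\ \det B=1}\mr{tr}(BC)=n(\det C)^{1/n}\qquad(C\succ 0)
\]
---the very identity Dembo uses to pass from~\eqref{eq:lsi_dim} to~\eqref{eq:lsi**}---converts $\mr{tr}(A^2M)$ into $\det(\msf{Id}_n+4M)^{1/n}$, cancels the $\det A$ prefactor, and produces the exponent $-(2-p)/(2p)$ on the right-hand side of~\eqref{eq:dt*}.

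The main obstacle, absent from Dembo's original argument, is that $\gamma_n$ is not invariant under non-orthogonal linear rescalings (in contrast to Lebesgue measure, which transforms only by a Jacobian). Consequently every intermediate computation naturally mixes $\gamma_n$- and $\gamma_{A^2}$-integrals, and the conversion back to a purely $\gamma_n$-statement requires delicate Radon--Nikodym bookkeeping. It is here that the full strength of the matrix normalization is needed, and not merely its trace corollary $\int|x|^2 u^2\,d\gamma_n=n\|u\|_{L_2(\gamma_n)}^2$: the matrix assumption is precisely what makes the optimization step produce $\det(\msf{Id}_n+4M)$ rather than the weaker trace-based bound that would follow from the scalar normalization alone.
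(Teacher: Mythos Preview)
Your proposal has a genuine gap. The scaling step you describe does not close: the Gaussian measure is not invariant under non-orthogonal linear maps, and the Radon--Nikodym derivative
\[
\frac{\diff\gamma_{A^2}}{\diff\gamma_n}(y)=\frac{1}{\det A}\exp\Bigl(\tfrac{1}{2}|y|^2-\tfrac{1}{2}\langle A^{-2}y,y\rangle\Bigr)
\]
is exponential in a quadratic form of $y$. The matrix normalization hypothesis controls only the \emph{second} moments $\int y_iy_j\,u^2\,\diff\gamma_n$; it gives no handle on integrals of $u^2$, $|u|^p$, or $|\nabla u|^2$ against this exponential weight. In particular there is no way, from second-moment information alone, to convert $\|u\|_{L_p(\gamma_{A^2})}$ into $\|u\|_{L_p(\gamma_n)}$, or $\int\langle A^2\nabla u,\nabla u\rangle\,\diff\gamma_{A^2}$ into a $\gamma_n$-integral of $(\nabla u)^{\otimes 2}$. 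So after applying~\eqref{eq:dt} to $v$ you are left with an inequality written entirely in $\gamma_{A^2}$, with $A$-dependent $L_p$ and $L_2$ norms that cannot be untangled, and the ``conversion back'' step cannot be carried out. This is exactly why the warm-up in Section~\ref{sec:lsi} scales the \emph{Euclidean} form of the LSI rather than its Gaussian form.

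The paper avoids this obstruction by not rescaling at all at the Gaussian level. Dembo's inequality~\eqref{eq:lsi**} already encodes the optimal $\msf{GL}_n$-rescaling of the LSI (done on Lebesgue measure), and the computation in Lemma~\ref{lem:beckner} shows that, under the matrix normalization, the Fisher information matrix of the measure $\diff\mu=u^2\,\diff\gamma_n$ relative to Lebesgue is exactly $4\int(\nabla u)^{\otimes 2}\,\diff\gamma_n+\msf{Id}_n$; the matrix hypothesis is what makes the cross terms $\int x_j\,u\,\partial_i u\,\diff\gamma_n$ vanish after integration by parts. This yields directly
\[
\mr{Ent}_{\gamma_n}[u^2]\le\tfrac12\log\det\Bigl(4\int_{\R^n}(\nabla u)^{\otimes 2}\,\diff\gamma_n+\msf{Id}_n\Bigr).
\]
The passage to the Beckner form then uses not~\eqref{eq:dt} but the elementary inequality of Dolbeault--Toscani and Lata\l a--Oleszkiewicz,
\[
\frac{\|u\|_{L_2(\gamma_n)}^2}{\|u\|_{L_p(\gamma_n)}^2}\le\exp\Bigl(\tfrac{2-p}{p}\,\mr{Ent}_{\gamma_n}[u^2]\Bigr),
\]
which requires no moment normalization and converts the entropy bound immediately into~\eqref{eq:dt*}. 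No optimization over $A$ is needed at this stage; it has already been absorbed into~\eqref{eq:lsi**}.
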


Applying the inequality $\det C \leq \left(\tfrac{\mr{tr} C}{n}\right)^n$ and rearranging, we see that \eqref{eq:dt*} strengthens \eqref{eq:dt}.

\subsubsection{Gagliardo--Nirenberg--Sobolev inequalities}
Fix $n\in\N$. The Gagliardo--Nirenberg inequality \cite{Gag59,Nir59} asserts that for every $p,q,r,s\in[1,\infty)$ and $\theta\in[0,1]$ satisfying the constraint
\begin{equation} \label{eq:gn_intro}
\frac{1}{p} = \frac{\theta}{q}+ \Big(\frac{1}{r}-\frac{1}{n}\Big) (1-\theta),
\end{equation}  
there exists a universal (optimal) constant $C^{p,q,r,s}>0$ such that every $u\in C_0^\infty(\R^n)$ satisfies
\begin{equation} \label{eq:gns-gen_intro}
\|u\|_{L_p(\R^n)} \leq C^{p,q,r,s}\|u\|_{L_q(\R^n)}^\theta \|\nabla u\|_{L_r(\R^n;\ell_s^n)}^{1-\theta},
\end{equation}
where we use the standard notation
\begin{equation}\label{eq:nabla_r_notation_intro}
\|\nabla u\|_{L_r(\R^n;\ell_s^n)} = \left( \int_{\R^n} \Big( \sum_{i=1}^n |\partial_i u(x)|^s \Big)^{r/s} \,\diff x\right)^{1/r}.
\end{equation}
In the special case $r\in(1,n)$ and $\theta=0$, inequality \eqref{eq:gns-gen_intro} boils down to the classical Sobolev inequality \cite{Sob38, Sob63}. The endpoint case $r=1$ and $\theta=0$ was due to \cite{Gag59,Nir59} and the corresponding optimal constant for $s=2$ was found by Federer, Fleming and Rishel \cite{FF60, FR60}. The optimal constant in the range $r\in(1,n)$ and $\theta=0$ for $s=2$ was discovered by Aubin and Talenti \cite{Aub76,Tal76}.  The logarithmic Sobolev inequality \eqref{eq:lsi} can be obtained as an endpoint case of the Gagliardo--Nirenberg--Sobolev inequality \eqref{eq:gns-gen_intro} with the optimal constant when $s=2$ (see \cite[Section~1]{dPD03}).  Finally,  the optimal constant $C^{p,q,r,s}$ for general parameters was found by Cordero-Erausquin, Nazaret and Villani in \cite[Section~3]{CNV04}.  In this paper, we present a refined inequality for $r=s$.

\begin{theorem} \label{thm:gns_intro}
Let $p,q,r\in[1,\infty)$, $\theta\in[0,1]$ and $C^{p,q,r,r}>0$ be such that \eqref{eq:gns-gen_intro} is satisfied for all functions $u\in C_0^\infty(\R^n)$ with $r=s$ under the constraint \eqref{eq:gn_intro}. Then, for every $u\in C_0^\infty(\R^n)$, we have
\begin{equation} \label{eq:gns*_intro}
\|u\|_{L_p(\R^n)} \leq C^{p,q,r,r} n^{\frac{1-\theta}{r}} \|u\|_{L_q(\R^n)}^\theta \Big( \prod_{j=1}^n \|\partial_ju\|_{L_r(\R^n)}\Big)^{\frac{1-\theta}{n}}.
\end{equation}
\end{theorem}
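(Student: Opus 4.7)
The plan is to apply the classical Gagliardo--Nirenberg--Sobolev inequality \eqref{eq:gns-gen_intro} (with $s=r$) to the anisotropically rescaled function $v(x_1,\ldots,x_n) \eqdef u(a_1 x_1,\ldots,a_n x_n)$ for arbitrary positive reals $a_1,\ldots,a_n$, and then to optimize the resulting one-parameter family of inequalities over $(a_1,\ldots,a_n)$. This is precisely the scaling principle from Section~\ref{subsec:scaling}: although \eqref{eq:gns-gen_intro} is automatically invariant under the isotropic dilations $u(x)\mapsto u(\lambda x)$ --- which is exactly what the constraint \eqref{eq:gn_intro} encodes --- it is \emph{not} invariant under the larger group of diagonal changes of variable, and exploiting this broken symmetry sharpens the inequality.

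Concretely, a change of variables gives $\|v\|_{L_q(\R^n)}=\big(\prod_{i=1}^n a_i\big)^{-1/q}\|u\|_{L_q(\R^n)}$ and analogously for the $L_p$-norm, while the chain rule $\partial_j v(x) = a_j (\partial_j u)(a_1 x_1,\ldots,a_n x_n)$ yields
\[
\|\nabla v\|_{L_r(\R^n;\ell_r^n)}^r = \Big(\prod_{i=1}^n a_i\Big)^{-1}\sum_{j=1}^n a_j^r \|\partial_j u\|_{L_r(\R^n)}^r.
\]
Substituting these identities into \eqref{eq:gns-gen_intro} applied to $v$, and rewriting the constraint \eqref{eq:gn_intro} in the equivalent form $\tfrac{1}{p}-\tfrac{\theta}{q}-\tfrac{1-\theta}{r} = -\tfrac{1-\theta}{n}$, the various powers of $\prod_i a_i$ collapse into a single factor and one obtains the one-parameter family
\[
\|u\|_{L_p(\R^n)}\leq C^{p,q,r,r}\|u\|_{L_q(\R^n)}^\theta \Big(\prod_{i=1}^n a_i\Big)^{-(1-\theta)/n}\Big(\sum_{j=1}^n a_j^r \|\partial_j u\|_{L_r(\R^n)}^r\Big)^{(1-\theta)/r},
\]
valid for every $(a_1,\ldots,a_n)\in(0,\infty)^n$.

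It remains to minimize the right-hand side over $a$. Since the relevant expression is homogeneous of degree zero in $a$, one may restrict to the slice $\prod_{i=1}^n a_i = 1$, on which the arithmetic--geometric mean inequality gives
\[
\sum_{j=1}^n a_j^r \|\partial_j u\|_{L_r(\R^n)}^r \geq n\Big(\prod_{j=1}^n a_j^r \|\partial_j u\|_{L_r(\R^n)}^r\Big)^{1/n} = n\Big(\prod_{j=1}^n \|\partial_j u\|_{L_r(\R^n)}\Big)^{r/n}.
\]
This lower bound is attained at $a_j\propto \|\partial_j u\|_{L_r(\R^n)}^{-1}$, so it coincides with the infimum, and plugging it into the one-parameter inequality recovers exactly \eqref{eq:gns*_intro}. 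I do not anticipate any serious obstacle: once one notices that \eqref{eq:gn_intro} is precisely the relation that makes the $\prod_i a_i$ factors collapse cleanly into the single exponent $-(1-\theta)/n$, the remainder is a routine AM--GM optimization; the optimal constant $C^{p,q,r,r}$ is automatically preserved because \eqref{eq:gns*_intro} is saturated by any anisotropic rescaling of an extremizer of \eqref{eq:gns-gen_intro}.
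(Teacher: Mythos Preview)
Your proposal is correct and follows essentially the same route as the paper: apply \eqref{eq:gns-gen_intro} to the diagonally rescaled function, use the constraint \eqref{eq:gn_intro} to collapse the $\prod_i a_i$ factors into the single exponent $-(1-\theta)/n$, and then optimize. The only cosmetic difference is that you phrase the final optimization via AM--GM on the slice $\prod_i a_i=1$, whereas the paper simply substitutes the optimizer $t_i=\|\partial_i u\|_{L_r(\R^n)}^{-1}$ directly; the two are equivalent.
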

The inequality \eqref{eq:gns*_intro} improves on \eqref{eq:gns-gen_intro} by the arithmetic mean-geometric mean inequality so Theorem \ref{thm:gns_intro} asserts that Euclidean Gagliardo--Nirenberg--Sobolev inequalities, that is, inequalities of the form \eqref{eq:gns-gen_intro} with the choice of parameter $r=s$, self-improve via scaling. In particular, \eqref{eq:gns*_intro} captures the fact (absent from \eqref{eq:gns-gen_intro}) that $\partial_i u \equiv 0$ on $\R^n$ implies that $u\equiv 0$ under any $L_s$-integrability assumption for $u$.

\subsection{Product spaces: tensorization} \label{sec:1.3}
If $(\Omega,\pi)$ is a probability space, then for a measurable function $f:\Omega\to\R_+$ we shall denote its entropy with respect to $\pi$ by
\begin{equation}
\mathrm{Ent}_\pi[f] \eqdef \int_\Omega f \log f\,\diff\pi - \Big( \int_\Omega f \,\diff\pi\Big) \, \log\Big( \int_\Omega f\,\diff\pi\Big).
\end{equation}The usefulness of logarithmic Sobolev inequalities in probability and geometry stems largely from the fact that entropy satisfies a simple yet powerful tensorization principle, rendering them dimension-free estimates \cite{L99}.  In the interesting work \cite{PS19},  Polyanskiy and Samorodnitsky introduced a family of more general inequalities for Markov semigroups called \emph{nonlinear} logarithmic Sobolev inequalities (see also \cite{Gro75, Wei78, CC84, DS84, Car91, Mic99,Sam08} for previous occurrences of such estimates in the literature and applications).  Let $\{P_t\}_{t\geq0}$ be a Markov semigroup acting on measurable functions $f:\Omega\to\R$  with stationary measure $\pi$. Following \cite{PS19}, we say that $\{P_t\}_{t\geq0}$ satisfies the $(p,\Phi)$--LSI, where $p\geq1$ and $\Phi:\R_+\to\R_+$ is a concave, continuous function with $\Phi(0)=0$,  if for every measurable function $f:\Omega\to\R_+$, we have
\begin{equation}
\mr{Ent}_\pi[f^p] \leq \mb{E}_\pi[f^p] \Phi\left(\frac{\ms{E}(f,f^{p-1})}{\mb{E}_\pi[f^p]}\right),
\end{equation}
where $\ms{E}(\cdot,\cdot)$ is the Dirichlet form corresponding to $\{P_t\}_{t\geq0}$. As usual, the term $\ms{E}(f,f^{p-1})$ is interpreted as $\ms{E}(f,\log f)$ in the endpoint case $p=1$.

In \cite[Theorem~1]{PS19}, the authors proved a dimensional tensorization property for nonlinear log-Sobolev inequalities asserting that if $\{P_t\}_{t\geq0}$ satisfies the $(p,\Phi)$--LSI, then for any $n\geq1$, the product semigroup $\{P_t^{\otimes n}\}_{t\geq0}$ with stationary measure $\pi^n$ satisfies the $\big(p, n\Phi(\tfrac{1}{n}\cdot )\big)$--LSI:
\begin{equation} \label{eq:into_PS}
\mathrm{Ent}_{\pi^n}[f^p] \leq  n\mb{E}_{\pi^n}[f^p]\Phi\left( \frac{\ms{E}(f,f^{p-1})}{n\mb{E}_{\pi^n}[f^p]}\right).
\end{equation}
By considering functions $f$ of the form $f(x_1,\ldots,x_n)= \tilde f(x_1,\ldots, x_k)$, for $k<n$, we see that \eqref{eq:into_PS} suffers from the problem of incorporating the ambient dimension $n$ into the constant, thus ignoring the structure of $f$. In the Euclidean setting, we overcame this issue by finding the correct basis via an optimization procedure over the cone of positive semidefinite matrices. In contrast, such an approach is not suitable on the Hamming cube due to its discrete nature. Our solution to this problem is to refine tensorization instead of scaling. Indeed, as a consequence of a more general tensorization principle (see Theorem \ref{thm:tensor} below), we shall prove the following stronger nonlinear logarithmic Sobolev inequality for product spaces.

\begin{theorem} \label{thm:tensor-intro}
Let $(\Omega,\pi,\{P_t\}_{t\geq0})$ be a stationary Markov semigroup satisfying the $(p,\Phi)$--LSI for some $p\geq1$ and some concave, continuous function $\Phi:\R_+\to\R_+$ with $\Phi(0)=0$. Then, for any $n\geq1$, every measurable function $f:\Omega^n\to\R_+$ satisfies
\begin{equation} \label{eq:tensor-intro-conclusion}
\mathrm{Ent}_{\pi^n}[f^p] \leq \mb{E}_{\pi^n}[f^p] \sum_{i=1}^n \Phi\left( \frac{\mb{E}_{\pi^n}\big[\ms{E}_i(f,f^{p-1})\big]}{\mb{E}_{\pi^n}[f^p]}\right),
\end{equation}
where $\ms{E}_i(\cdot,\cdot)$ is the Dirichlet form associated with the $i$-th component of the semigroup $\{P_t^{\otimes n}\}_{t\geq0}$.
\end{theorem}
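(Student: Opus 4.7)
The plan is to sharpen the tensorization argument of Polyanskiy--Samorodnitsky by applying Jensen's inequality coordinate-by-coordinate rather than after pooling all Dirichlet forms together. The proof will rest on three ingredients: (i) the classical subadditivity of entropy on product measures; (ii) the $(p,\Phi)$--LSI applied conditionally in each single coordinate; and (iii) the joint concavity on $(0,\infty)\times[0,\infty)$ of the perspective map $(a,b)\mapsto a\,\Phi(b/a)$, which holds precisely because $\Phi$ is concave with $\Phi(0)=0$.

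Write $\mathbb{E}_{\pi,i}$ and $\mathrm{Ent}_{\pi,i}$ for the expectation and entropy taken in the $i$-th coordinate only, so that $\mathbb{E}_{\pi,i}[f^p]$ and $\mathscr{E}_i(f,f^{p-1})$ are measurable functions of the remaining $n-1$ variables. First, I would invoke the standard tensorization of entropy,
\begin{equation*}
\mathrm{Ent}_{\pi^n}[f^p] \leq \sum_{i=1}^n \mathbb{E}_{\pi^n}\bigl[\mathrm{Ent}_{\pi,i}[f^p]\bigr].
\end{equation*}
Next, applying the hypothesized $(p,\Phi)$--LSI in the $i$-th coordinate with the other coordinates frozen gives the pointwise estimate
\begin{equation*}
\mathrm{Ent}_{\pi,i}[f^p] \leq \mathbb{E}_{\pi,i}[f^p]\,\Phi\!\left(\frac{\mathscr{E}_i(f,f^{p-1})}{\mathbb{E}_{\pi,i}[f^p]}\right).
\end{equation*}
Integrating this bound against the product measure in the remaining variables and using the joint concavity of the perspective map together with Jensen's inequality yields
\begin{equation*}
\mathbb{E}_{\pi^n}\bigl[\mathrm{Ent}_{\pi,i}[f^p]\bigr] \leq \mathbb{E}_{\pi^n}[f^p]\,\Phi\!\left(\frac{\mathbb{E}_{\pi^n}[\mathscr{E}_i(f,f^{p-1})]}{\mathbb{E}_{\pi^n}[f^p]}\right),
\end{equation*}
after observing that $\mathbb{E}_{\pi^n}[\mathbb{E}_{\pi,i}[f^p]] = \mathbb{E}_{\pi^n}[f^p]$. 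Summing over $i$ delivers \eqref{eq:tensor-intro-conclusion}.

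The main conceptual point—where the refinement over \eqref{eq:into_PS} is gained—is the second use of Jensen with the jointly concave perspective function: it permits one to replace a single $n\Phi(\cdot/n)$ applied to the aggregate Dirichlet energy by the coordinate-wise sum $\sum_i \Phi(\cdot)$, which is stronger by the concavity of $\Phi$ (this is already visible from $n\Phi(\frac{1}{n}\sum_i s_i) \geq \sum_i \Phi(s_i)$). The remaining ingredients, namely subadditivity of entropy and the conditional application of the $(p,\Phi)$--LSI along a single coordinate, are standard. I do not anticipate a genuine obstacle; the only technical verification is the joint concavity of $(a,b)\mapsto a\Phi(b/a)$ on $(0,\infty)\times [0,\infty)$, which follows from a short convex-analytic computation and can be isolated as a preliminary lemma, consistent with the more general tensorization framework developed in Theorem~\ref{thm:tensor} of the main text.
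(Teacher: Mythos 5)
Your proposal is correct and takes essentially the same route as the paper: the paper (in the proof of Theorem~\ref{thm:tensor}) also proceeds via subadditivity of entropy, conditional application of the single-coordinate LSI, and then an application of Jensen's inequality to $\Phi$ under the tilted probability measure $\frac{\mb{E}_{\mu_i}[f_{x_{\sim i}}]}{\mb{E}_{\mu}[f]}\,\diff\mu_{\sim i}(x_{\sim i})$, which is precisely the perspective-function concavity you invoke, just unpacked. The only cosmetic difference is that you name the jointly concave perspective map $(a,b)\mapsto a\Phi(b/a)$ and apply Jensen once, whereas the paper folds the factor $\mb{E}_{\mu_i}[f_{x_{\sim i}}]$ into the reference measure and applies Jensen for the concave $\Phi$ directly; also note that joint concavity of the perspective on $(0,\infty)\times[0,\infty)$ follows from concavity of $\Phi$ alone, the hypothesis $\Phi(0)=0$ being used only to extend continuously to $a=0$.
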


It follows readily from Jensen's inequality that
\begin{equation}
 \sum_{i=1}^n \Phi\left( \frac{\mb{E}_{\pi^n}\big[\ms{E}_i(f,f^{p-1})\big]}{\mb{E}_{\pi^n}[f^p]}\right) \leq  n\Phi\left( \frac{\ms{E}(f,f^{p-1})}{n\mb{E}_{\pi^n}[f^p]}\right),
\end{equation}
where $\ms{E}(\cdot,\cdot)$ is the Dirichlet form associated to $\{P_t^{\otimes n}\}_{t\geq0}$ and thus \eqref{eq:tensor-intro-conclusion} indeed strengthens \eqref{eq:into_PS}. Moreover, in \cite[Theorems~4 and 6]{PS19}, the authors found the \emph{optimal} functions $\Phi_p$ such that the $(p,\Phi_p)$-LSI is satisfied on the one-dimensional Hamming cube $\{0,1\}$ equipped with the uniform measure. Tensorizing their result via Theorem \ref{thm:tensor-intro}, one deduces an improved nonlinear logarithmic Sobolev inequality on the Hamming cube $\{0,1\}^n$.  


\subsection{Space forms: stochastic methods} \label{sec:intro_space} In order to explain our intrinsic dimensional functional inequalities on space forms we first recall the notion of \emph{local} logarithmic Sobolev inequalities. Starting with the Euclidean setting, fix $T\ge 0$, $x\in \R^n$, and let $\frac{\diff\mu}{\diff \lambda_n}=\frac{fP_T\delta_x}{P_Tf(x)}$ where $\delta_x$ is the Dirac mass at $x$, $f:\R^n\to \R$ is a nonnegative function, and $\{P_t\}_{t\geq0}$ is the Euclidean heat semigroup given by $P_th(x):=\int h(x+\sqrt{t}z)d\gamma_n(z)$. Plugging $\mu$ into \eqref{eq:lsi_dim} yields (after integration by parts and using the explicit form of $P_T\delta_x$),
\begin{align}
\label{eq:local_lsi_dim}
P_T(f\log f)(x)-P_Tf(x)\log P_Tf(x)\le \frac{T}{2}P_T\Delta f (x)+\frac{n}{2}P_Tf(x)\log\left(1-\frac{T}{n}\frac{P_T(f\Delta\log f)(x)}{P_Tf(x)}\right).
\end{align}
The inequality \eqref{eq:local_lsi_dim} is the \emph{local} dimensional logarithmic Sobolev inequality on $\R^n$ \cite{BL06}. While \eqref{eq:local_lsi_dim} provides an \emph{upper} bound on the (local) entropy, the  \emph{reverse} local dimensional logarithmic Sobolev inequality \cite{BL06} provides a \emph{lower} bound,
\begin{align}
\label{eq:local_lsi_dim_reverse}
P_T(f\log f)(x)-P_Tf(x)\log P_Tf(x)\ge \frac{T}{2}P_T\Delta f (x)-\frac{n}{2}P_Tf(x)\log\left(1+\frac{T}{n}\Delta\log P_Tf(x)\right).
\end{align}
Analogously, we can use \eqref{eq:lsi**}, instead of \eqref{eq:lsi_dim}, to get the local intrinsic dimensional logarithmic Sobolev inequality on $\R^n$,
\begin{align}
\label{eq:intrinisc_lsi_dim}
P_T(f\log f)(x)-P_Tf(x)\log P_Tf(x)\le \frac{T}{2}P_T \Delta f (x)+\frac{1}{2}P_Tf(x)\log\det\left(\msf{Id}_n -T\frac{P_T(f\nabla^2\log f(x))}{P_Tf(x)}\right),
\end{align}
which improves on \eqref{eq:local_lsi_dim}. As for a \emph{reverse}  local intrinsic dimensional logarithmic Sobolev inequality in $\R^n$, we will establish below (Theorem \ref{thm:flat}) that
\begin{align}
\label{eq:intrinisc_lsi_dim_reverse}
P_T(f\log f)(x)-P_Tf(x)\log P_Tf(x)\ge \frac{T}{2}\Delta P_Tf(x)-\frac{1}{2}P_Tf(x)\log\det\left(\msf{Id}_n+T\nabla^2\log P_Tf(x)\right),
\end{align}
which improves on \eqref{eq:local_lsi_dim_reverse}.

Turning to the manifold setting, local dimensional logarithmic Sobolev inequalities exist on manifolds in forms which account for both the dimension of the manifold as well as the Ricci curvature \cite{BBL17}. In light of the existence of the local intrinsic dimensional logarithmic Sobolev inequalities on Euclidean spaces \eqref{eq:intrinisc_lsi_dim} and \eqref{eq:intrinisc_lsi_dim_reverse}, we wish to understand whether such inequalities can also exist on manifolds. Upon closer inspection, however, it is clear that inequalities such as \eqref{eq:intrinisc_lsi_dim} and \eqref{eq:intrinisc_lsi_dim_reverse} cannot hold if the only curvature information given pertains to the Ricci tensor. On a conceptual level, the difference between the dimensional and intrinsic dimensional inequalities is that the former provide information about the \emph{trace} of the Fisher information matrix, while the latter provide information about the \emph{full spectrum}. Hence,  while information on the trace of the Riemann tensor, i.e., Ricci curvature,  suffices to yield a dimensional inequality, information on the full Riemann tensor, i.e., sectional curvature,  should be required to give an intrinsic dimensional inequality.

A concrete manifestation of this intuition is exhibited by the inequalities of Li--Yau and Hamilton \cite{LY86,Ham93}. As was realized in \cite{BL06}, the reverse  local dimensional logarithmic Sobolev inequality \eqref{eq:local_lsi_dim_reverse} implicitly implies the Li--Yau inequality on $\R^n$,
\begin{equation}\label{eq:Li-Yau}
\forall x\in \R^n, \qquad -\Delta \log P_Tf(x)\le \frac{n}{T},
\end{equation}
since the argument in the log term of \eqref{eq:local_lsi_dim_reverse} must be nonnegative. Analogously, the reverse  local intrinsic dimensional logarithmic Sobolev inequality \eqref{eq:intrinisc_lsi_dim_reverse} implies Hamilton's inequality,
\begin{equation}\label{eq:Hamilton}
\forall x\in \R^n, \qquad-\nabla^2 \log P_Tf(x) \preceq\frac{1}{T}\msf{Id}_n,
\end{equation}
where $\preceq$ is the order of positive semidefinite matrices.  In the manifold setting, the  Li--Yau inequality, which is a statement about the \emph{trace} of the Hessian of $\log P_Tf$, holds under a nonnegativity assumption on the \emph{trace} of the Riemann tensor, namely the Ricci tensor \cite{LY86,YZ20}. Indeed,  Bakry and Ledoux \cite{BL06} (see also the follow-up work \cite{BBL17}) established (reverse) local dimensional logarithmic Sobolev inequalities on manifolds with nonnegative \emph{Ricci} curvature which imply the Li--Yau inequality. In contrast, Hamilton's inequality, which is a statement about the \emph{Hessian} of $\log P_Tf$, requires the manifold to have nonnegative sectional curvature (and also to be  Einstein), which is an assumption on the \emph{full spectrum} of the Riemann tensor \cite{Ham93}. It follows that if local intrinsic dimensional logarithmic Sobolev inequalities were to hold, then information about the sectional curvature should be provided.

In this work we establish local intrinsic dimensional logarithmic Sobolev inequalities as well as Hamilton-type matrix inequalities for space forms: Euclidean spaces, spheres, and hyperbolic spaces. In addition to serving as the model spaces for functional inequalities on manifolds, these spaces are the simplest non-trivial examples of manifolds  where we could hope for local intrinsic dimensional logarithmic Sobolev inequalities to hold. The methods of scaling and tensorization which worked, respectively, for Euclidean spaces and product spaces no longer apply on curved spaces as they lack product and homogeneity structures. Hence, we take a different route and build on the stochastic approach of Lehec  \cite{Leh13,Leh17} and Eldan, Lehec, and Shenfeld \cite{ELS20} towards logarithmic Sobolev inequalities. We start by stating our local intrinsic dimensional logarithmic Sobolev on space forms while deferring precise definitions to Part \ref{part:space_forms}.

\begin{theorem} \label{thm:manifolds}
Let $(\MM,\GG)$ be an $n$-dimensional Riemannian manifold with constant sectional curvature $\kappa\in\R\setminus\{0\}$ with the associated  heat semigroup $\{P_t\}_{t\ge 0}$. Fix $T>0$, $x\in\MM$, a smooth positive function $f:\MM\to\R$ with $\int_M f\,\diff P_T\delta_x=1$, and let $\mu$ be the probability measure with $\frac{\diff\mu}{\diff P_T\delta_x}=f$. Define the 2-tensor $C(t) = \frac{e^{n\kappa t}}{n\kappa}A+tB$ for $t\in\R$ where $A, B$ are the 2-tensors given by
\begin{equation}
\begin{cases}
A =- e^{-n\kappa T}  \big(P_T\nabla^2 f(x) - \frac{1}{n} P_T \Delta f(x)\cdot \GG\big) \\ B = \Big(\frac{(n-1)\kappa}{2} - \frac{\Delta P_T f(x)}{n}\Big) \cdot \GG.
\end{cases}
\end{equation}
Then, we have the local intrinsic dimensional logarithmic Sobolev inequality
\begin{align} \label{eq:manifolds-lsi}
\begin{split}
&P_T(f\log f)(x)-P_Tf(x)\log P_Tf(x)\\
& \leq \frac{1}{2} \int_0^T\!\!\!  \mr{tr}\Big[e^{C(t)-C(T)} \Big(\GG + \mb{E}_\mu \big(\nabla \log f\big)^{\otimes 2}\!\! \int_t^T e^{2C(s)-2C(T)} \,\diff s \Big)^{-1}\!\! \mb{E}_\mu \big(\nabla \log f\big)^{\otimes 2}  e^{C(t)-C(T)}\Big]\,\diff t,
\end{split}
\end{align}
and the reverse local intrinsic dimensional logarithmic Sobolev inequality
\begin{align} \label{eq:manifolds-rev-lsi}
\begin{split}
&P_T(f\log f)(x)-P_Tf(x)\log P_Tf(x)\\
& \geq \frac{1}{2} \int_0^T \mr{tr}\Big[e^{C(t)-C(0)} \Big( \GG -\big( \nabla \log P_Tf(x)\big)^{\otimes 2} \int_0^t e^{2C(s)-2C(0)}\,\diff s\Big)^{-1} \big( \nabla \log P_Tf(x)\big)^{\otimes 2} e^{C(t)-C(0)}\Big] \,\diff t.
\end{split}
\end{align}
\end{theorem}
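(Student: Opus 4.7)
The approach adapts the Föllmer/entropy-drift method of Lehec \cite{Leh13,Leh17} and Eldan, Lehec, and Shenfeld \cite{ELS20} to the Riemannian setting via the Eells-Elworthy-Malliavin stochastic development. Let $(B_t)_{t\in[0,T]}$ be Brownian motion on $\MM$ starting at $x$, so its law at time $T$ is $P_T\delta_x$; parallel transport along $B$ identifies each $T_{B_t}\MM$ with $T_x\MM$ and converts tensor-valued functionals into $\R^n$- or symmetric-matrix-valued semimartingales. The associated Föllmer drift $(v_t)$ transporting $P_T\delta_x$ to $\mu$ satisfies
\begin{equation*}
P_T(f\log f)(x) - P_Tf(x)\log P_Tf(x) = \HH(\mu\|P_T\delta_x) = \frac{1}{2}\,\mb{E}\!\int_0^T |v_t|_\GG^2\,\diff t,
\end{equation*}
and in the parallel-transported frame $(v_t)$ is a martingale terminating at $\nabla\log f(X_T)$ with $X_T\sim\mu$.

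For the upper bound \eqref{eq:manifolds-lsi}, the plan is to control this entropy by $\tfrac12\mb{E}\!\int_0^T|u_t|^2\,\diff t$ for a suboptimal \emph{linear} comparison drift $u_t = \Lambda(t)Z$, where $Z\in T_x\MM$ has covariance $\mb{E}_\mu(\nabla\log f)^{\otimes 2}$ and $\Lambda(t)$ is deterministic. The Bochner-Weitzenböck formula on a space form of constant sectional curvature $\kappa$ yields a Riccati-type evolution for $\nabla^2\log P_{T-t}f$ along $B_t$ whose curvature correction enters via $(n-1)\kappa$ (the Ricci eigenvalue); crucially, \emph{constant} sectional curvature is what decouples the traceless and spherical parts of the Hessian, giving rise to the two independent pieces of $C(t) = \tfrac{e^{n\kappa t}}{n\kappa}A + tB$---the exponential piece encoding the traceless part $A$ and the linear piece encoding the scalar part $B$. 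The optimal comparison is
\begin{equation*}
\Lambda(t) = \Bigl(\GG + \mb{E}_\mu(\nabla\log f)^{\otimes 2}\!\int_t^T e^{2(C(s)-C(T))}\,\diff s\Bigr)^{-1} e^{C(t)-C(T)}\,\mb{E}_\mu(\nabla\log f)^{\otimes 2},
\end{equation*}
and substituting into $\tfrac12\mb{E}\!\int_0^T|u_t|^2\,\diff t$ produces exactly \eqref{eq:manifolds-lsi}.

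For the reverse inequality \eqref{eq:manifolds-rev-lsi} we invoke the time-reversed Föllmer bridge: a Brownian motion with drift $\nabla\log P_{T-t}f$ whose Girsanov entropy gives the matching lower bound. The random terminal score $\nabla\log f(X_T)$ is now replaced by the \emph{deterministic} initial score $\nabla\log P_Tf(x)$, so the boundary condition for the same matrix Riccati is imposed at $t=0$ instead of $t=T$; this accounts for the $e^{C(t)-C(0)}$ factors and flips the sign in front of the $\int_0^t e^{2(C(s)-C(0))}\,\diff s$ term (a positivity constraint becomes an upper constraint under reversal, consistent with the Hamilton-type consequence $-\nabla^2\log P_Tf(x) \preceq \frac{e^{n\kappa T}-1}{n\kappa e^{n\kappa T}}^{-1}\GG$ obtained by forcing the logarithm's argument to remain positive).

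The principal technical obstacle is ensuring that the Bochner-Weitzenböck identity closes as a decoupled ODE in the trace-free and spherical sectors of the Hessian; on a manifold with nonconstant sectional curvature the sectional couplings would mix eigendirections and no closed-form $C(t)$ would exist. The remaining work---careful bookkeeping of parallel transport, the $L^2$-bounds needed to justify Itô's formula and Girsanov, and the final matrix-exponential algebra---is routine.
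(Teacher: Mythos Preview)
Your proposal has the right scaffolding---Lehec's entropy formula, the F\"ollmer process, and the recognition that constant sectional curvature is what makes the traceless/spherical decomposition of the Hessian close---but the mechanism you describe for turning this into the stated bounds has genuine gaps.

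First, the claim that in the parallel-transported frame the drift $v_t=\nabla\log P_{T-t}f(X_t)$ is a martingale is \emph{false} when $\kappa\neq 0$.  The SDE for the components of $v_t$ carries a Ricci drift: one computes (via It\^o on the lifted function and the Bochner identity) that
\[
\diff (\HH_i{\li F}_t)(\Psi_t)=\langle \nabla^{\mr{hor}}\HH_i{\li F}_t,\diff W_t\rangle+\tfrac12\,\mr{Ric}(\nabla F_t,\Psi_te_i)\,\diff t,
\]
so on a curved space form the martingale picture you invoke from \cite{ELS20} does not hold as stated.  This Ricci term is precisely the source of the $(n-1)\kappa$ contribution in $B$.

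Second, the ``suboptimal linear comparison drift'' $u_t=\Lambda(t)Z$ is not a valid competitor in the Borell/Bou\'e--Dupuis variational formula unless the driven process has terminal law $\mu$; a drift that is linear in a single random vector $Z$ with prescribed covariance will not steer Brownian motion on $\MM$ to an arbitrary $\mu$.  So this route, as written, does not produce an upper bound.  Relatedly, for the reverse inequality a ``time-reversed Girsanov'' argument would again yield an \emph{upper} bound on entropy (the variational formula is an infimum), not the claimed lower bound.

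What the paper actually does is different in a crucial way: it works with the \emph{deterministic} matrix $v(t)=\mb{E}\big[(\nabla\log P_{T-t}f(X_t))^{\otimes 2}\big]$, derives the identity $v'(t)=(n-1)\kappa\,v(t)+\mb{E}[Q(t)^2]$ from the SDE above, and then uses Wang's Hessian commutation on space forms together with the matrix Jensen inequality $\mb{E}[Q(t)^2]\succeq (\mb{E} Q(t))^2$ to obtain a closed Bernoulli-type matrix differential \emph{inequality}
\[
v'(t)\succeq v(t)^2 + C'(t)v(t)+v(t)C'(t),\qquad C'(t)=e^{n\kappa t}A+B.
\]
A single comparison lemma for this ODE, read with terminal data $v(T)=\mb{E}_\mu(\nabla\log f)^{\otimes 2}$, gives the upper bound on $v(t)$ and hence on $\HH(\mu\|P_T\delta_x)=\tfrac12\int_0^T\mr{tr}\,v(t)\,\diff t$; read with initial data $v(0)=(\nabla\log P_Tf(x))^{\otimes 2}$, the \emph{same} inequality gives the lower bound.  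The $\Lambda(t)$ you wrote down is exactly the solution of this Bernoulli comparison, but it arises from solving a deterministic matrix ODE, not from plugging a drift into a variational formula.  Your proposal should be rewritten around this Riccati-inequality/comparison mechanism; the ``comparison drift'' heuristic is not the engine here.
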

As will become clear from the proof of Theorem \ref{thm:manifolds}, the theorem is not optimal and follows from a more powerful ``master " matrix differential inequality (section \ref{subsec:discussion}). There are other inequalities which can be deduced from the master matrix differential inequality, specifically in space forms with nonpositive sectional curvature. In particular, we prove Hamilton-type matrix inequalities for the heat equation:
\begin{theorem}
\label{thm:Hamilton_intro}
 Let $(\MM,\GG)$ be an $n$-dimensional Riemannian manifold with constant nonpositive sectional curvature $\kappa \le 0$. Let $\{P_t\}_{t\ge 0}$ be the associated heat semigroup and let  $f:\MM\to \R$  be a positive function. Then, for every $x\in\MM$ and every $T\ge 0$, 
\begin{align}
\label{eq:Hamilton_matrix_inq_negcurved_lambda=0_intro}
\begin{split}
&\text{if, either }\kappa=0, \text{ or }\kappa<0\text{ and }\frac{4}{n^2\kappa}\frac{\Delta P_Tf(x)}{P_Tf(x)}=1,\quad\text{then}\quad -\nabla^2\log P_Tf(x)	\preceq   \frac{1}{T}\msf{Id}_n\quad\forall x\in \MM.
\end{split}
\end{align}
Further,
\begin{align}
\label{eq:Hamilton_matrix_inq_negcurved_lambda>0_intro}
\begin{split}
&\text{if }\kappa<0\text{ and }\frac{4}{n^2\kappa}\frac{\Delta P_Tf(x)}{P_Tf(x)}>1,\\
&\text{then}\quad 
-\nabla^2\log P_Tf(x)	\preceq \frac{n\kappa}{2}\left\{\sqrt{\frac{4}{n^2\kappa}\frac{\Delta P_Tf(x)}{P_Tf(x)}-1}\cot\left(\frac{n\kappa T}{2}\sqrt{\frac{4}{n^2\kappa}\frac{\Delta P_Tf(x)}{P_Tf(x)}-1}\right)-1\right\}\msf{Id}_n.
\end{split}
\end{align}
\end{theorem}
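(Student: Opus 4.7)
The strategy is to extract Theorem~\ref{thm:Hamilton_intro} from the master matrix differential inequality alluded to in the discussion following Theorem~\ref{thm:manifolds}, in exact analogy with the Euclidean case: Hamilton's classical bound~\eqref{eq:Hamilton} is obtained from~\eqref{eq:intrinisc_lsi_dim_reverse} by insisting on positivity of the matrix inside the $\log\det$. In the manifold setting the analogous positivity constraint will again be the source of the Hamilton-type bound, but the matrix ODE governing it now carries an explicit curvature contribution specific to the space form.

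First I would revisit the stochastic (Föllmer-type) construction behind Theorem~\ref{thm:manifolds}. On a space form of constant sectional curvature $\kappa\le 0$, the process produces a symmetric matrix-valued covariation $\Sigma(t)$, $t\in[0,T]$, whose terminal value dominates $-\nabla^2\log P_Tf(x)$ in the positive semidefinite order and which satisfies a matrix Riccati differential inequality of the form
\begin{equation*}
\dot\Sigma(t)\preceq -\Sigma(t)^2 + n\kappa\,\Sigma(t)+c\,\GG,\qquad \Sigma(0)=0,
\end{equation*}
where the scalar $c$ is built from $n$, $\kappa$, $\Delta P_Tf(x)$ and $P_Tf(x)$ in such a way that $(n\kappa)^2+4c$ is proportional to $\frac{4}{n^2\kappa}\frac{\Delta P_Tf(x)}{P_Tf(x)}-1$, i.e.\ exactly the discriminant appearing in the statement. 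This is the matrix inequality that, integrated against the appropriate weight $e^{C(t)-C(0)}$, yields~\eqref{eq:manifolds-rev-lsi}; the identification $\Sigma(T)\succeq -\nabla^2\log P_Tf(x)$ is the matrix counterpart of the identity that converts reverse entropic inequalities into Hamilton-type bounds.

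Because the sectional curvature of a space form is constant, the Riemann contribution in the underlying Bochner/Weitzenböck identity collapses to a scalar multiple of $\GG$, so the matrix Riccati inequality above simultaneously diagonalizes: every eigenvalue of $\Sigma(T)$ is controlled by the solution of the scalar Riccati ODE
\begin{equation*}
\dot\varphi = -\varphi^2+n\kappa\,\varphi+c,\qquad\varphi(0)=0.
\end{equation*}
This is an elementary constant-coefficient Riccati equation. Splitting cases on the sign of $(n\kappa)^2+4c$ gives precisely the dichotomy of Theorem~\ref{thm:Hamilton_intro}: in the degenerate regime (either $\kappa=0$, or $\frac{4}{n^2\kappa}\frac{\Delta P_Tf(x)}{P_Tf(x)}=1$) one integrates $\dot\varphi=-(\varphi-n\kappa/2)^2$ explicitly and recovers $\varphi(T)\le 1/T$, which is~\eqref{eq:Hamilton_matrix_inq_negcurved_lambda=0_intro}; in the positive-discriminant regime $\frac{4}{n^2\kappa}\frac{\Delta P_Tf(x)}{P_Tf(x)}>1$ the closed-form solution is the $\cot$ expression of~\eqref{eq:Hamilton_matrix_inq_negcurved_lambda>0_intro}.

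The main obstacle is the first step: pinning down the master matrix Riccati inequality with the sharp constants advertised. This requires careful bookkeeping of the curvature terms in the Itô calculus for the covariation process on $(\MM,\GG)$, and in particular exploiting the constancy of the sectional curvature to ensure that the non-scalar part of the Weitzenböck contribution cancels against the traceless tensor $A$ of Theorem~\ref{thm:manifolds}. This cancellation is precisely what allows the matrix Riccati to reduce to a scalar one on each eigenspace and what explains why the theorem is limited to space forms rather than general manifolds of pinched sectional curvature.
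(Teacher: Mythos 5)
There is a genuine gap, and it is structural rather than a missing detail. You posit a matrix-valued process $\Sigma(t)$ with $\Sigma(0)=0$ satisfying $\dot\Sigma(t)\preceq -\Sigma(t)^2 + n\kappa\,\Sigma(t)+c\,\GG$ and $\Sigma(T)\succeq -\nabla^2\log P_Tf(x)$, and then you \emph{forward-solve} the scalar Riccati ODE from $\varphi(0)=0$ to produce the bound at time $T$. This cannot reproduce Hamilton's estimate: with $\kappa=0$ and the curvature contribution $c$ vanishing, the equation $\dot\varphi=-\varphi^2$, $\varphi(0)=0$ gives $\varphi\equiv 0$, hence $-\nabla^2\log P_Tf(x)\preceq 0$, which is false. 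More generally, the dissipative sign $-\Sigma^2$ in your Riccati inequality produces bounded, $T$-stabilizing comparison solutions, whereas Hamilton's bound $1/T$ decays; the two behaviors are incompatible.

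The paper's proof uses a different and, crucially, a \emph{blow-up} mechanism. The relevant object is $m(t)=\mb{E}[-\nabla^2\log P_{T-t}f(X_t)]$, which satisfies $m(0)=-\nabla^2\log P_Tf(x)$ — the matrix to be estimated appears as the \emph{initial}, not terminal, datum. Lemma \ref{lem:m(t)ODI} gives (for $\kappa\le 0$, using $v(t)\succeq 0$ to drop the term $-\kappa v(t)$) the Riccati inequality $\frac{\diff m(t)}{\diff t}\succeq m(t)^2+n\kappa\,m(t)+\kappa\,\Delta P_Tf(x)\cdot\GG$, with the \emph{positive} quadratic coefficient $m(t)^2$. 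Projecting along a fixed direction $\theta$, the scalar $\phi(t)=\langle m(t)\theta,\theta\rangle$ is bounded below by a solution $\sigma(t)$ of the corresponding scalar Riccati equation with $\sigma(0)=\phi(0)$, and this comparison solution \emph{blows up in finite time} unless $\phi(0)$ is small. Since $\phi(t)$ is finite on all of $[0,T]$, the explosion time must exceed $T$, which is precisely the constraint that yields $\phi(0)\le 1/T$ in the flat/degenerate case, and the cotangent bound (via $\sqrt{\lambda}T+\arctan(\cdot)<\pi/2$) when $\lambda>0$. The theorem is an a priori constraint on the initial value of a Riccati inequality whose comparison solutions can explode; it is not obtained by integrating an ODE from a zero initial condition. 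Your sketch also invokes a cancellation of the traceless tensor $A$ against the Weitzenböck term, but the actual argument in the paper never needs such a cancellation — it simply discards $-\kappa v(t)\succeq 0$ using $\kappa\le 0$, which is exactly why the result is confined to nonpositively curved space forms.
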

In flat space, where $\kappa=0$, Theorem \ref{thm:Hamilton_intro} reduces to \eqref{eq:Hamilton}, namely, Hamilton's matrix inequality \cite[Corollary 4.4]{Ham93}. In hyperbolic spaces, Theorem \ref{thm:Hamilton_intro} is completely new. The constraint $\frac{4}{n^2\kappa}\frac{\Delta P_Tf(x)}{P_Tf(x)}>1$ is natural. Indeed, Theorem \ref{thm:Hamilton_intro} is a matrix version of the improved Li-Yau inequality of Bakry, Bolley, and Gentil---see Remark \ref{rem:BBG}.

Going beyond matrix inequalities, we can use our master matrix differential inequality to obtain another form of local intrinsic dimensional logarithmic Sobolev inequalities.
\begin{theorem}
\label{thm:nge_curved_intro}
Let $(\MM,\GG)$ be the $n$-dimensional hyperbolic space with sectional curvature $\kappa<0$ with the associated  heat semigroup $\{P_t\}_{t\ge 0}$. Fix $T>0$, $x\in\MM$, a smooth positive function $f:\MM\to\R$ with $\int_M f\,\diff P_T\delta_x=1$,
and let $\mu$ be the probability measure with $\frac{\diff\mu}{\diff P_T\delta_x}=f$. 
Then, with
\begin{align}
\lambda\eqdef \frac{n^2\kappa^2}{4}\left\{\frac{4}{n^2\kappa}\Delta P_Tf(x)-1\right\}, \quad 
\alpha_i\eqdef
\begin{cases}
\arctan\left(\frac{1}{\sqrt{\lambda}}\left(\sigma_i+\frac{n\kappa}{2}\right)\right)  &\text{if }\lambda>0,\\
-\frac{2}{2\sigma_i+n\kappa}  &\text{if }\lambda=0,\\
\arctanh\left(-\frac{1}{\sqrt{-\lambda}}\left(\sigma_i+\frac{n\kappa}{2}\right)\right)&\text{if }\lambda<0,\\
\end{cases}
\end{align}
we have the local intrinsic dimensional logarithmic Sobolev inequality
\begin{align} \label{eq:manifolds-lsi_nonpos}
\begin{split}
P_T(f\log f)(x)-P_Tf(x)\log P_Tf(x)\leq \frac{ P_T\Delta f(x)}{2}-\frac{n^2\kappa T}{2}-\frac{1}{2}
\begin{cases}
\sum_{i=1}^n\log\left(\frac{\cos (\alpha_i)}{\cos(\sqrt{\lambda}T+\alpha_i)}\right)&\text{if }\lambda>0\\
\sum_{i=1}^n\log\left(\frac{\alpha_i}{T+\alpha_i}\right) &\text{if }\lambda=0\\
\sum_{i=1}^n\log\left(\frac{\cosh (\alpha_i)}{\cosh(\sqrt{-\lambda}T+\alpha_i)}\right) &\text{if }\lambda<0\\
\end{cases}
\end{split}
\end{align}
where $\{\sigma_i\}_{i=1}^n$ are the eigenvalues of $\mb{E}_{\mu}[-\nabla^2\log f]$, and the reverse local intrinsic dimensional logarithmic Sobolev inequality
\begin{align} \label{eq:manifolds-rev-lsi_nonpos}
\begin{split}
P_T(f\log f)(x)-P_Tf(x)\log P_Tf(x)\geq \frac{ P_T\Delta f(x)}{2}-\frac{n^2\kappa T}{2}+\frac{1}{2}
\begin{cases}
\sum_{i=1}^n\log\left(\frac{\cos (\alpha_i)}{\cos(\sqrt{\lambda}T+\alpha_i)}\right)&\text{if }\lambda>0\\
\sum_{i=1}^n\log\left(\frac{\alpha_i}{T+\alpha_i}\right) &\text{if }\lambda=0\\
\sum_{i=1}^n\log\left(\frac{\cosh (\alpha_i)}{\cosh(\sqrt{-\lambda}T+\alpha_i)}\right) &\text{if }\lambda<0\\
\end{cases}
\end{split}
\end{align}
where $\{\sigma_i\}_{i=1}^n$ are the eigenvalues of $-\nabla^2\log P_Tf(x)$.
\end{theorem}

\medskip

\subsection*{Acknowledgements} We are grateful to Dario Cordero-Erausquin,  Max Fathi, Nathael Gozlan, and Yury Polyanskiy for useful pointers to the literature and to  Georgios Moschidis for many helpful discussions. We also thank the anonymous referee for their helpful comments. This material is based upon work supported by the National
Science Foundation under Award Number 2002022.


\part{Euclidean and product spaces: scaling and tensorization}


\section{Logarithmic Sobolev inequalities in Euclidean spaces and Cram\'er--Rao bounds} \label{sec:lsi}

In this  section we discuss strengthenings of logarithmic Sobolev inequalities for measures on Euclidean spaces by means of scaling. In addition, we derive an application of these inequalities to Bayesian Cram\'er--Rao bounds.


\subsection{Warm-up: Gross' inequality}
The Euclidean reformulation \eqref{eq:lsi} of the logarithmic Sobolev inequality in Gauss space \cite{Gro75} asserts that if $f:\R^n\to\R_+$ is a probability density, then
\begin{equation} \label{eq:gross-2}
\int_{\R^n} f(x)\log f(x)\,\diff x - \HH(\gamma_n\|\lambda_n) \leq \frac{1}{2}\left( \int_{\R^n} \frac{|\nabla f(x)|^2}{f(x)} \,\diff x - n\right).
\end{equation}
Fix such a density $f$ and consider the reparametrized density $f_A:\R^n\to\R_+$ which is given by $f_A(x) = (\det A)\cdot f(Ax)$, where $A\in\msf{GL}_n$ is a positive definite matrix. Applying \eqref{eq:gross-2} for $f_A$ we get
\begin{equation}
\begin{split}
\int_{\R^n} f(x)\log f(x)\,\diff x & +\log\det A  - \HH(\gamma_n\|\lambda_n)=\int_{\R^n} f_A(x)\log f_A(x)\,\diff x - \HH(\gamma_n\|\lambda_n)
\\ & \leq \frac{1}{2}\left( \int_{\R^n} \frac{|\nabla f_A(x)|^2}{f_A(x)} \,\diff x - n\right) = \frac{1}{2}\left( \int_{\R^n} \frac{|A\cdot \nabla f(x)|^2}{f(x)} \,\diff x - n\right),
\end{split}
\end{equation}
which after rearranging becomes
\begin{equation} \label{eq:gross-3}
\begin{split}
\int_{\R^n} f(x)\log f(x)\,\diff x  - \HH(\gamma_n\|\lambda_n)& \leq \frac{1}{2}\left( \int_{\R^n} \frac{|A\cdot \nabla f(x)|^2}{f(x)} \,\diff x -\log\det A^2 - n\right)
\\ & =  \frac{1}{2}\left( \mathrm{tr} \big( A^2\cdot \III(\mu\|\lambda_n)\big) -\log\det A^2 - n\right).
\end{split}
\end{equation}
For the optimal choice of matrix $A=\III(\mu\|\lambda_n)^{-1/2}$, \eqref{eq:gross-3} readily becomes Dembo's inequality \eqref{eq:lsi**}. Observe that in this argument we made critical use of the change of variables formula for the Lebesgue measure, i.e., that $\lambda_n(AK) = (\det A)\cdot \lambda_n(K)$ for any Borel $K\subset\R^n$ and $A\in\msf{GL}_n$. While Lebesgue is the only measure on Euclidean space satisfying such an invariance property under all linear transformations, in the next section we shall observe that a weaker self-improvement can be deduced for measures which behave well under diagonal linear maps.


\subsection{Logarithmic Sobolev inequalities for homogeneous measures} \label{subsec:homogeneous}
Let $p_1,\ldots,p_n\geq0$. An absolutely continuous measure $\rho$ on $\R^n$ with density $w:\R^n\to\R_+$ is called $(p_1,\ldots,p_n)$-homogeneous if for every $t_1,\ldots,t_n>0$,
\begin{equation}
\forall \ x=(x_1,\ldots,x_n)\in\R^n, \qquad w(t_1x_1,\ldots,t_nx_n) = t_1^{p_1}\cdots t_n^{p_n} w(x_1,\ldots,x_n).
\end{equation}

\begin{theorem} \label{thm:homogeneous}
Fix $c_1,c_2>0$, $n\in\N$, $p_1,\ldots,p_n\geq0$ and let $\rho$ be a $(p_1,\ldots,p_n)$-homogeneous measure such that for any Borel probability measure $\mu$ on $\R^n$,
\begin{equation} \label{eq:homog-assumption}
\HH(\mu\|\rho)\leq c_1 \II(\mu\|\rho) + c_2.
\end{equation} 
Then, for any Borel probability measure $\mu$ on $\R^n$ with  positive differentiable density $f$, we have
\begin{equation}\label{eq:homog-conclusion}
\HH(\mu\|\rho)\leq  \frac{1}{2} \sum_{k=1}^n (1+p_k)  \log\left( \frac{2ec_1}{1+p_k} \int_{\R^n} \frac{(\partial_k f(y))^2}{f(y)} \,\diff\rho(y)\right) + c_2.
\end{equation}
\end{theorem}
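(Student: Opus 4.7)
The plan is to emulate the warm-up argument for Gross' inequality, but restricted to the class of symmetries for which $\rho$ behaves well, namely the diagonal positive linear maps. Since homogeneity only constrains $\rho$ under such diagonal rescalings (not arbitrary $\msf{GL}_n$ transformations, as was available for Lebesgue), the optimization is carried out coordinate-wise rather than over the full positive-definite cone, which is exactly what produces the sum of $n$ logarithmic terms indexed by $k$ in \eqref{eq:homog-conclusion}.

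Concretely, assuming $f = \diff\mu/\diff\rho$, I would fix parameters $t_1,\dots,t_n>0$, write $T=\mathrm{diag}(t_1,\dots,t_n)$, and introduce the reparametrized density
\begin{equation}
f_T(x) \eqdef \Big(\prod_{k=1}^n t_k^{1+p_k}\Big)\, f(Tx).
\end{equation}
The $(p_1,\ldots,p_n)$-homogeneity of $w=\diff\rho/\diff\lambda_n$ together with the change of variables $y=Tx$ gives $\int f_T\,\diff\rho = 1$, so $\mu_T \eqdef f_T\rho$ is a probability measure. A second application of the same change of variables, combined with the identity $\log f_T(x) = \sum_k(1+p_k)\log t_k + \log f(Tx)$, yields
\begin{equation}
\HH(\mu_T\|\rho) \;=\; \HH(\mu\|\rho) + \sum_{k=1}^n (1+p_k)\log t_k,
\end{equation}
and since $\partial_k f_T(x) = t_k \big(\prod_j t_j^{1+p_j}\big)(\partial_k f)(Tx)$,
\begin{equation}
\II(\mu_T\|\rho) \;=\; \sum_{k=1}^n t_k^2\, I_k, \qquad \text{where } I_k \eqdef \int_{\R^n} \frac{(\partial_k f(y))^2}{f(y)}\,\diff\rho(y).
\end{equation}

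Feeding $\mu_T$ into the hypothesis \eqref{eq:homog-assumption} and rearranging,
\begin{equation}
\HH(\mu\|\rho) \;\leq\; c_1\sum_{k=1}^n t_k^2 I_k \;-\; \sum_{k=1}^n (1+p_k)\log t_k \;+\; c_2.
\end{equation}
The right-hand side separates across coordinates, so I optimize each term independently: minimizing $t \mapsto c_1 t^2 I_k - (1+p_k)\log t$ over $t>0$ gives the critical point $t_k^2 = (1+p_k)/(2c_1 I_k)$, and substituting back produces $\tfrac{1+p_k}{2}\log\!\big(\tfrac{2ec_1 I_k}{1+p_k}\big)$ for each $k$. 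Summing recovers \eqref{eq:homog-conclusion}.

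The only real subtlety, rather than an obstacle, is confirming that the homogeneity identity $w(T^{-1}y)=\prod_k t_k^{-p_k}\,w(y)$ is used in the right direction in both the entropy and Fisher information computations so that the Jacobian $\prod_k t_k^{-1}$ from $\diff y = (\det T)\diff x$ and the prefactor $\prod_k t_k^{1+p_k}$ built into $f_T$ cancel cleanly; this normalization is forced by the requirement $\int f_T\,\diff\rho = 1$. The case $I_k=0$ (when a partial derivative vanishes $\rho$-a.e.) can be handled either by omitting that coordinate from the scaling (taking $t_k$ arbitrary and noting the corresponding term in \eqref{eq:homog-conclusion} is $-\infty$, consistent with the convention $0\log 0 = 0$ on the right) or by a limiting argument $t_k\to\infty$ exploiting $\log t_k \ll t_k^2$.
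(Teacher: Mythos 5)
Your proposal is correct and follows essentially the same route as the paper: diagonal rescaling $f_T(x)=(\prod_k t_k^{1+p_k})f(Tx)$, the identical computations of $\HH(\mu_T\|\rho)$ and $\II(\mu_T\|\rho)$ via homogeneity and change of variables, and coordinate-wise optimization giving $t_k^2=(1+p_k)/(2c_1 I_k)$. The brief remark on the degenerate case $I_k=0$ is a small addition not present in the paper's proof, but the argument is the same.
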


The existence of homogeneous measures $\rho$ satisfying inequalities of the form \eqref{eq:homog-assumption}, as well as more general \emph{entropy-energy inequalities} follows, for instance, from \cite[Proposition~7.3.1]{BGL14}.

\begin{proof} [Proof of Theorem \ref{thm:homogeneous}]
Let $f=\frac{\diff\mu}{\diff \rho}$ be an arbitrary positive function with $\rho$-integral equal to 1 and fix $t_1,\ldots,t_n>0$. The measure $\mu_t$ with density $x\mapsto f_t(x)= t_1^{1+p_1}\cdots t_n^{1+p_n} f(t_1x_1,\ldots,t_nx_n)$ with respect to $\rho$ is a probability measure, as
\begin{equation}
\prod_{i=1}^n t_i^{1+p_i} \int_{\R^n} f(t_1x_1,\ldots,t_nx_n) w(x)\,\diff x = \prod_{i=1}^n t_i^{p_i}\int_{\R^n} f(y) w(t_1^{-1}y_1,\ldots,t_n^{-1}y_n)\,\diff y = 1,
\end{equation}
where we made the change of variables $(y_1,\ldots,y_n)=(t_1x_1,\ldots,t_nx_n)$. We have,
\begin{equation*}
\begin{split}
&\HH(\mu_t\|\rho)  =  \prod_{i=1}^n t_i^{1+p_i} \int_{\R^n} f(t_1x_1,\ldots,t_n x_n) \left\{\log f(t_1x_1,\ldots,t_nx_n) + \sum_{k=1}^n (1+p_k) \log t_k\right\} w(x)\,\diff x
\\ & =  \prod_{i=1}^n t_i^{p_i} \int_{\R^n} f(y) \left\{\log f(y)  + \sum_{k=1}^n (1+p_k) \log t_k\right\} w(t_1^{-1}y_1,\ldots,t_n^{-1}y_n) \,\diff y 
 = \HH(\mu\|\rho) + \sum_{k=1}^n (1+p_k) \log t_k.
\end{split}
\end{equation*}
Similarly, assuming in addition that $f$ is differentiable, for every $k\in\{1,\ldots,n\}$ we have $\partial_k f_t(x) = t_k \prod_i t_i^{1+p_i} \partial_kf(t_1x_1,\ldots,t_n x_n)$ and thus
\begin{equation}
\begin{split}
\II(\mu_t\|\rho) & = \sum_{k=1}^n t_k^2 \cdot \prod_{i=1}^n t_i^{1+p_i} \int_{\R^n} \frac{(\partial_k f(t_1x_1,\ldots,t_nx_n))^2}{f(t_1x_1,\ldots,t_nx_n)} w(x)\,\diff x
\\ & = \sum_{k=1}^n t_k^2 \cdot\prod_{i=1}^n t_i^{p_i} \int_{\R^n} \frac{(\partial_k f(y))^2}{f(y)} w(t_1^{-1}y_1,\ldots,t_n^{-1}y_n) \,\diff y = \sum_{k=1}^n t_k^2 \int_{\R^n} \frac{(\partial_k f(y))^2}{f(y)} \,\diff\rho(y).
\end{split}
\end{equation}
Therefore, applying \eqref{eq:homog-assumption} for $\mu_t$ and reorganizing the terms, we deduce that
\begin{equation*}
\HH(\mu\|\rho) \leq \inf_{t_1,\ldots,t_n\geq0} \left\{ c_1 \sum_{k=1}^n t_k^2 \int_{\R^n} \frac{(\partial_k f(y))^2}{f(y)} \,\diff\rho(y) - \sum_{k=1}^n (1+p_k) \log t_k + c_2 \right\}.
\end{equation*}
It is now elementary to check that the above infimum is attained when
\begin{equation}
\forall \ k\in\{1,\ldots,n\},\qquad t_k^2 = \frac{1+p_k}{2c_1} \Big(  \int_{\R^n} \frac{(\partial_k f(y))^2}{f(y)} \,\diff\rho(y) \Big)^{-1}
\end{equation}
and plugging this choice of parameters completes the proof.
\end{proof}

Specifically for Lebesgue measure, Theorem \ref{thm:homogeneous} implies that if $\mu$ has differentiable density $f$, 
\begin{equation} \label{eq:lsi-diag}
\HH(\mu\|\lambda_n)-\HH(\gamma_n\|\lambda_n) \leq \frac{1}{2} \sum_{k=1}^n \log\left(  \int_{\R^n} \frac{(\partial_k f(y))^2}{f(y)} \,\diff y \right),
\end{equation}
which is weaker than Dembo's inequality \eqref{eq:lsi**} in view of the elementary estimate $\det C\leq \prod_s C_{ss}$ which holds for all positive semidefinite matrices $C$. On the other hand, \eqref{eq:lsi-diag} combined with Jensen's inequality implies \eqref{eq:lsi_dim}. We refer to \cite{BCL95,BL06,BBL12,BBL17} for further dimensional logarithmic Sobolev inequalities and applications to Li--Yau-type estimates \cite{LY86}, hypercontractivity \cite{Nel66, Bon70, Nel73, Bec75, Bak94} and heat kernel estimates \cite{Bak94,BCL95}.
 

\subsection{A Bayesian Cram\'er--Rao bound} 
\label{subsec:CR_bound}
In \cite{ALPC19}, Aras, Lee, Pananjady and Courtade observed that logarithmic Sobolev inequalities formally imply Bayesian Cram\'er--Rao bounds, thus extending some results of Efroimovich \cite{Efr80} for Gaussian measures. In this section, we investigate similar applications of intrinsic dimensional log-Sobolev inequalities in the spirit of \eqref{eq:homog-conclusion} and \eqref{eq:lsi**}.

Following \cite{ALPC19}, we work in the setting of parametric statistics. Let $\{\mu_\theta\}_{\theta\in\R^n}$ be a family of probability measures on a measurable space $(\Omega,\ms{F})$. Assume moreover that there exists a dominating $\sigma$-finite measure $\lambda$ on $\Omega$ such that $\mu_\theta$ has a positive density with respect to $\lambda$,
\begin{equation}
\forall \ \theta\in\R^n, \qquad \diff\mu_\theta(x) = f(x;\theta)\,\diff\lambda(x).
\end{equation}
We shall assume throughout that each function $\theta\mapsto f(x;\theta)$ is smooth and that
\begin{equation} \label{eq:regu}
\int_\Omega \nabla_\theta f(x;\theta) \,\diff\lambda(x) = 0
\end{equation}
for almost every $\theta\in\R^n$. The Fisher information of the parametric family $\{\mu_\theta\}_{\theta\in\R^n}$ is
\begin{equation}
\forall \ \theta\in\R^n,\qquad \ms{J}(\theta) \eqdef \int_\Omega \frac{|\nabla_\theta f(x;\theta)|^2}{f(x;\theta)}\,\diff\lambda(x).
\end{equation}
Finally, if $\pi$ is a probability measure on $\R^n$, we denote the mutual information of $\pi$ with the family $\{\mu_\theta\}_{\theta\in\R^n}$ by
\begin{equation}
I\big(\pi;\{\mu_\theta\}\big) \eqdef \int_{\R^n} \int_\Omega f(x;\theta) \log \left(\frac{f(x;\theta)}{\int_{\R^n} f(x,\phi)\,\diff\pi(\phi)}\right) \,\diff\lambda(x) \,\diff\pi(\theta).
\end{equation}
The main result of \cite[Theorem~1]{ALPC19} specified to the standard Gaussian measure $\gamma_n$ asserts that for every absolutely continuous probability measure $\pi$ on $\R^n$,
\begin{equation} \label{eq:alpc}
I\big(\pi;\{\mu_\theta\}\big) + \HH(\pi\|\gamma_n) \leq \frac{1}{2}\Big( \II(\pi\|\gamma_n) + \int_{\R^n} \ms{J}(\theta)\,\diff\pi(\theta) \Big).
\end{equation}
Inequality \eqref{eq:alpc} implies the Gaussian logarithmic Sobolev inequality \eqref{eq:lsi-gross} since choosing $\mu_\theta=\lambda$ independently of $\theta$, the terms $I(\pi;\{\mu_\theta\})$ and $\ms{J}(\theta)$ both vanish. We present inequalities in the spirit of \eqref{eq:alpc} for homogeneous measures satisfying a log-Sobolev inequality of the form \eqref{eq:homog-assumption}.

\begin{theorem} \label{thm:cramer-rao}
Fix $c_1,c_2>0$, $n\in\N$, $p_1,\ldots,p_n\geq0$ and let $\rho$ be a $(p_1,\ldots,p_n)$-homogeneous measure  such that for any probability measure $\mu$ on $\R^n$,
\begin{equation} \label{eq:cr-assumption}
\HH(\mu\|\rho)\leq c_1 \II(\mu\|\rho) + c_2.
\end{equation}
Then, for every parametric family $\{\mu_\theta\}_{\theta\in\R^n}$ and every absolutely continuous probability measure $\pi$ on $\R^n$ whose density with respect to $\rho$ is $h:\R^n\to\R_+$, we have
\begin{equation}
\begin{split}
&I\big(\pi;\{\mu_\theta\}\big)+ \HH(\pi\|\rho)
\\& \leq \frac{1}{2}\sum_{k=1}^n (1+p_k) \log\left( \frac{2ec_1}{1+p_k}\Big(\int_{\R^n} \frac{(\partial_{k} h(\theta))^2}{h(\theta)}\,\diff\rho(\theta) + \int_{\R^n} \int_\Omega \frac{(\partial_{\theta_k} f(x;\theta))^2}{f(x;\theta)} \,\diff\lambda(x)\,\diff\pi(\theta)\Big) \right) + c_2.
\end{split}
\end{equation}
\end{theorem}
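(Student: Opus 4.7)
My plan is to apply the intrinsic log-Sobolev inequality \eqref{eq:homog-conclusion} from Theorem~\ref{thm:homogeneous} to the \emph{posterior} distribution of the parameter, and then integrate against the marginal distribution of the observation. Concretely, let $h=\diff\pi/\diff\rho$ and define the marginal density $m(x)\eqdef \int_{\R^n} h(\theta)f(x;\theta)\,\diff\rho(\theta)$ with respect to $\lambda$, and for each $x\in\Omega$ the posterior density (with respect to $\rho$)
\begin{equation}
g(x;\theta)\eqdef \frac{h(\theta)f(x;\theta)}{m(x)}.
\end{equation}
For each fixed $x$ this is a probability density on $\R^n$, so \eqref{eq:homog-conclusion} applied to the measure $g(x;\cdot)\,\diff\rho$ gives
\begin{equation}
\HH\bigl(g(x;\cdot)\rho\,\big\|\,\rho\bigr)\leq \frac{1}{2}\sum_{k=1}^n(1+p_k)\log\!\left(\frac{2ec_1}{1+p_k}\int_{\R^n}\frac{(\partial_{\theta_k}g(x;\theta))^2}{g(x;\theta)}\,\diff\rho(\theta)\right)+c_2.
\end{equation}

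Next, I integrate both sides against $m(x)\,\diff\lambda(x)$. For the left-hand side, since $g(x;\theta)m(x)=h(\theta)f(x;\theta)$ and $\int f(x;\theta)\,\diff\lambda(x)=1$, a direct computation splits $\log g=\log h+\log f-\log m$ into the two pieces
\begin{equation}
\int_\Omega m(x)\HH\bigl(g(x;\cdot)\rho\,\big\|\,\rho\bigr)\,\diff\lambda(x)=\HH(\pi\|\rho)+I\bigl(\pi;\{\mu_\theta\}\bigr),
\end{equation}
which is exactly the left-hand side we want. For the right-hand side, concavity of $\log$ and Jensen's inequality allow us to move the $\,\diff\lambda$-integration inside the logarithm term by term in $k$.

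It remains to evaluate the inner Fisher-type integral after this swap. The key algebraic step is
\begin{equation}
\partial_{\theta_k}\log g(x;\theta)=\partial_{\theta_k}\log h(\theta)+\partial_{\theta_k}\log f(x;\theta),
\end{equation}
so after squaring and using $m(x)g(x;\theta)=h(\theta)f(x;\theta)$,
\begin{equation}
\int_\Omega m(x)\int_{\R^n}\frac{(\partial_{\theta_k}g)^2}{g}\,\diff\rho\,\diff\lambda=\int_{\R^n}h(\theta)\int_\Omega f(x;\theta)\bigl(\partial_{\theta_k}\log h+\partial_{\theta_k}\log f\bigr)^2\diff\lambda(x)\,\diff\rho(\theta).
\end{equation}
Expanding the square, the cross term carries the factor $\int_\Omega\partial_{\theta_k}f(x;\theta)\,\diff\lambda(x)$, which vanishes by the regularity assumption \eqref{eq:regu}. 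What remains is exactly the sum of the two Fisher information terms appearing in the statement (after rewriting $h(\theta)\,\diff\rho(\theta)=\diff\pi(\theta)$). Combining these steps yields the desired bound.

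The only step that requires any care is the cancellation of the cross term and the bookkeeping that turns the integrated left-hand side into $\HH(\pi\|\rho)+I(\pi;\{\mu_\theta\})$; everything else is a mechanical application of Theorem~\ref{thm:homogeneous} and Jensen's inequality. In the Gaussian/Lebesgue special case $p_1=\cdots=p_n=0$, $c_1=\tfrac12$, $c_2=\HH(\gamma_n\|\lambda_n)$, this procedure recovers and sharpens the Aras--Lee--Pananjady--Courtade bound \eqref{eq:alpc} in the intrinsic-dimensional form analogous to Dembo's refinement \eqref{eq:lsi**}, which serves as a useful sanity check.
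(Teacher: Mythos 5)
Your proposal is correct and essentially identical to the paper's argument: you apply Theorem~\ref{thm:homogeneous} to the posterior density $g(x;\cdot)\,\diff\rho$ (the paper writes $h_x$ for your $g(x;\cdot)$ and $f(x)$ for your $m(x)$), integrate against the marginal $m(x)\,\diff\lambda(x)$, push the integration inside the concave logarithm via Jensen, and annihilate the cross term using the regularity condition \eqref{eq:regu}. Your log-derivative identity $\partial_{\theta_k}\log g=\partial_{\theta_k}\log h+\partial_{\theta_k}\log f$ is just a cosmetic repackaging of the paper's direct expansion of $\partial_k h_x$, and the bookkeeping producing $\HH(\pi\|\rho)+I(\pi;\{\mu_\theta\})$ on the left-hand side matches the paper's computation line for line.
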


Observe that the terms inside the logarithm on the right-hand side are the $k$-th component of the Fisher informations $\II(\pi|\rho)$ and $\ms{J}(\theta)$ respectively, in analogy with Theorem \ref{thm:homogeneous}.

\begin{proof} [Proof of Theorem \ref{thm:cramer-rao}]
Consider the function $f:\Omega\to\R_+$ given by
\begin{equation} \label{eq:cramer-f}
\forall \ x\in\Omega,\qquad f(x)\eqdef \int_{\R^n} f(x;\theta) \,\diff\pi(\theta)
\end{equation}
and observe that
\begin{equation}
\int_\Omega f(x)\,\diff\lambda(x) = \int_\Omega \int_{\R^n} f(x;\theta) \,\diff\pi(\theta)\,\diff\lambda(x) = \int_{\R^n} \int_\Omega \, \diff\mu_\theta(x) \diff\pi(\theta) = 1.
\end{equation}
Moreover, for $x\in\Omega$, consider the function $h_x:\R^n\to\R_+$ given by
\begin{equation}
\forall \ \theta\in\R^n, \qquad h_x(\theta) \eqdef \frac{h(\theta)f(x;\theta)}{f(x)}
\end{equation}
and notice that the measure $\nu_x$ on $\R^n$ with $\diff\nu_x(\theta) = h_x(\theta)\diff \rho(\theta)$ is a probability measure since
\begin{equation}
\nu_x(\R^n) = \int_{\R^n} \frac{h(\theta) f(x;\theta)}{f(x)} \, \diff\rho(\theta) = \int_{\R^n} \frac{f(x;\theta)}{f(x)} \,\diff\pi(\theta)  \stackrel{\eqref{eq:cramer-f}}{=}1.
\end{equation}
By Theorem \ref{thm:homogeneous} and the assumption on $\rho$, for every $x\in\Omega$ we have
\begin{equation}
\HH(\nu_x\|\rho) \leq \frac{1}{2} \sum_{k=1}^n (1+p_k)\log\Big( \frac{2ec_1}{1+p_k} \int_{\R^n} \frac{(\partial_k h_x(\theta))^2}{h_x(\theta)}\,\diff\rho(\theta)\Big) + c_2.
\end{equation}
Integrating this inequality with respect to the probability measure $f(x)\diff\lambda(x)$, we get
\begin{equation} \label{eq:rao-long}
\begin{split}
\int_\Omega \HH(\nu_x\|\rho)f(x) \,\diff\lambda(x) & \leq \frac{1}{2} \sum_{k=1}^n (1+p_k) \int_\Omega\log\Big( \frac{2ec_1}{1+p_k} \int_{\R^n} \frac{(\partial_k h_x(\theta))^2}{h_x(\theta)}\,\diff\rho(\theta)\Big)f(x)\,\diff\lambda(x) + c_2
\\ & \leq \frac{1}{2} \sum_{k=1}^n (1+p_k) \log\Big( \frac{2ec_1}{1+p_k} \int_\Omega \int_{\R^n} \frac{(\partial_k h_x(\theta))^2}{h_x(\theta)}f(x) \,\diff\rho(\theta)\,\diff\lambda(x)\Big) + c_2,
\end{split}
\end{equation}
where the last line follows from Jensen's inequality. Moreover, by definition we have
\begin{equation}
\begin{split}
\int_\Omega \HH(\nu_x\|\rho)f(x) \,\diff\lambda(x) & = \int_\Omega \int_{\R^n} h(\theta) f(x;\theta) \log\Big( \frac{h(\theta)f(x;\theta)}{f(x)}\Big) \, \diff\rho(\theta)\,\diff\lambda(x) 
\\ & = \int_{\R^n} h(\theta) \log h(\theta)\,\diff\rho(\theta) + \int_{\R^n} \int_\Omega f(x;\theta) \log \big(\frac{f(x;\theta)}{f(x)} \big)\, \diff\lambda(x)\,\diff\pi(\theta)
\\ & = \HH(\pi\|\rho) + I\big(\pi;\{\mu_\theta\}\big).
\end{split}
\end{equation}
Similarly, computing the integral on the right-hand side of \eqref{eq:rao-long}, gives
\begin{equation*}
\begin{split}
 \int_\Omega \int_{\R^n} &\frac{(\partial_k h_x(\theta))^2}{h_x(\theta)}f(x) \,\diff\rho(\theta)\,\diff\lambda(x) =  \int_\Omega \int_{\R^n} \frac{\big(f(x;\theta)\partial_k h(\theta) + h(\theta) \partial_{\theta_k} f(x;\theta)\big)^2}{h(\theta)f(x;\theta)} \,\diff\rho(\theta)\,\diff\lambda(x)
 \\ & = \int_{\R^n}\frac{(\partial_k h(\theta))^2}{h(\theta)} \int_\Omega f(x;\theta) \, \diff\lambda(x)\,\diff\rho(\theta) + 2 \int_{\R^n} \partial_k h(\theta) \int_\Omega \partial_{\theta_k} f(x;\theta) \,\diff\lambda(x) \,\diff\rho(\theta)
 \\ & \qquad\qquad\qquad\qquad\qquad\qquad\qquad\qquad\qquad + \int_{\R^n} h(\theta) \int_\Omega \frac{(\partial_{\theta_k}f(x;\theta))^2}{f(x;\theta)}\,\diff\lambda(x)\,\diff\rho(\theta)
 \\ & \stackrel{\eqref{eq:regu}}{=} \int_{\R^n} \frac{(\partial_{k} h(\theta))^2}{h(\theta)}\,\diff\rho(\theta) + \int_{\R^n} \int_\Omega \frac{(\partial_{\theta_k} f(x;\theta))^2}{f(x;\theta)} \,\diff\lambda(x)\,\diff\pi(\theta).
\end{split}
\end{equation*}
Combining everything, we deduce the desired inequality.
\end{proof}

\begin{remark}
In the case of the Gaussian measure $\rho=\gamma_n$, we have at our disposal the intrinsic dimensional logarithmic Sobolev inequality \eqref{eq:lsi**}. Repeating the same proof mutatis mutandis while replacing \eqref{eq:homog-conclusion} with \eqref{eq:lsi**}, we conclude that for any probability measure $\pi$ on $\R^n$ whose density with respect to $\gamma_n$ is $h:\R^n\to\R_+$, and for every parametric family $\{\mu_\theta\}_{\theta\in\R^n}$, we have
\begin{equation*} \label{eq:efroi}
I\big(\pi;\{\mu_\theta\}\big) + \HH(\pi\|\gamma_n) \leq \frac{\mr{tr}\MM_{2,\pi}-n}{2} + \frac{1}{2}\log\det\Big( 2\msf{Id}_n + \III(\pi\|\gamma_n) + \!\!\int_{\R^n}\!\!\! \int_\Omega \frac{(\nabla_\theta f(x;\theta))^{\otimes 2}}{f(x;\theta)}\diff\lambda(x) \diff\pi(\theta) - \MM_{2,\pi} \Big),
\end{equation*}
where $\MM_{2,\pi} = \int \theta^{\otimes 2}\diff\pi(\theta)$. This recovers a result of Efroimovich \cite[Theorem~5]{Efr80}. Combining the inequalities $\log\det C \leq n\log\tfrac{\mr{tr}C}{n}$ and $\log y \leq y-1$, which hold for all $y>0$ and all $n\times n$ positive definite matrices $C$, we see that Efroimovich's inequality is a strengthening of \eqref{eq:alpc}.
\end{remark}


\section{Gagliardo--Nirenberg--Sobolev inequalities} \label{sec:gns}
In this section we shall prove Theorem \ref{thm:gns_intro}:

\begin{theorem} \label{thm:gns}
Let $p,q,r\in[1,\infty)$, $\theta\in[0,1]$ and $C^{p,q,r,r}>0$ be such that 
\begin{equation} \label{eq:gns-gen}
\|u\|_{L_p(\R^n)} \leq C^{p,q,r,r}\|u\|_{L_q(\R^n)}^\theta \|\nabla u\|_{L_r(\R^n;\ell_r^n)}^{1-\theta}
\end{equation}
is satisfied for all functions $u\in C_0^\infty(\R^n)$ under the constraint
\begin{equation} \label{eq:gn}
\frac{1}{p} = \frac{\theta}{q}+ \Big(\frac{1}{r}-\frac{1}{n}\Big) (1-\theta).
\end{equation}  
Then, for every $u\in C_0^\infty(\R^n)$, we have
\begin{equation} \label{eq:gns*}
\|u\|_{L_p(\R^n)} \leq C^{p,q,r,r} n^{\frac{1-\theta}{r}} \|u\|_{L_q(\R^n)}^\theta \Big( \prod_{j=1}^n \|\partial_ju\|_{L_r(\R^n)}\Big)^{\frac{1-\theta}{n}}.
\end{equation}
\end{theorem}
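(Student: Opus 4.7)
My plan is to follow the self-improvement-via-scaling template already used for Gross' inequality in the warm-up of Section \ref{sec:lsi}, but with \emph{diagonal} (rather than general positive definite) scalings, since only diagonal changes of variable interact well with the anisotropic norm $\|\nabla u\|_{L_r(\R^n;\ell_r^n)}$ when $r\ne 2$.

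Concretely, for parameters $t=(t_1,\dots,t_n)\in(0,\infty)^n$, I set $u_t(x)\eqdef u(t_1x_1,\dots,t_nx_n)$ and compute:
\begin{equation*}
\|u_t\|_{L_\alpha(\R^n)}=(t_1\cdots t_n)^{-1/\alpha}\|u\|_{L_\alpha(\R^n)}\quad (\alpha\in\{p,q\}),\qquad \partial_j u_t(x)=t_j\,(\partial_j u)(t_1x_1,\dots,t_nx_n),
\end{equation*}
so that $\|\partial_j u_t\|_{L_r(\R^n)}^r = t_j^r (t_1\cdots t_n)^{-1}\|\partial_j u\|_{L_r(\R^n)}^r$. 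Substituting $u_t$ into \eqref{eq:gns-gen} and using the Gagliardo--Nirenberg constraint \eqref{eq:gn} to simplify the overall power of $t_1\cdots t_n$ (the exponent of this product collapses to $-(1-\theta)/n$), I obtain
\begin{equation*}
\|u\|_{L_p(\R^n)}\le C^{p,q,r,r}\,\|u\|_{L_q(\R^n)}^\theta\,(t_1\cdots t_n)^{-(1-\theta)/n}\Bigl(\sum_{j=1}^n t_j^r\|\partial_j u\|_{L_r(\R^n)}^r\Bigr)^{(1-\theta)/r},
\end{equation*}
which is the key scaling inequality.

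It then remains to minimize the right-hand side over $t\in(0,\infty)^n$. Writing $a_j\eqdef\|\partial_j u\|_{L_r(\R^n)}^r$ and $s_j\eqdef t_j^r$, this reduces to minimizing $(s_1\cdots s_n)^{-1/n}\sum_{j=1}^n s_j a_j$, and the arithmetic mean--geometric mean inequality gives
\begin{equation*}
\sum_{j=1}^n s_j a_j\ \ge\ n\bigl(s_1\cdots s_n\bigr)^{1/n}\bigl(a_1\cdots a_n\bigr)^{1/n},
\end{equation*}
with equality for $s_j\propto a_j^{-1}$, i.e.~$t_j\propto\|\partial_j u\|_{L_r(\R^n)}^{-1}$. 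Plugging this back in yields exactly the factor $n^{(1-\theta)/r}\bigl(\prod_j\|\partial_j u\|_{L_r(\R^n)}\bigr)^{(1-\theta)/n}$ on the right-hand side, giving \eqref{eq:gns*}.

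There is no serious obstacle: the argument is a short exercise in scaling plus AM--GM. The only subtle point to be careful about is the bookkeeping with the constraint \eqref{eq:gn}, which conspires so that the combined power of $t_1\cdots t_n$ is genuinely negative (hence the infimum is attained in the interior, not at $0$ or $\infty$), and the observation that the diagonal change of variables is exactly what ``diagonalizes'' the $\ell_r^n$-norm on the gradient---so the optimization decouples coordinate-wise, which is the mechanism by which the product $\prod_j\|\partial_j u\|_{L_r(\R^n)}^{1/n}$ replaces the $\ell_r^n$-norm of the full gradient.
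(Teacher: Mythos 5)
Your proof is correct and follows essentially the same route as the paper's: diagonal scaling $u_t(x)=u(t_1x_1,\dots,t_nx_n)$, use of the constraint \eqref{eq:gn} to collapse the exponent of $t_1\cdots t_n$ to $-(1-\theta)/n$, and optimization leading to $t_j=\|\partial_j u\|_{L_r(\R^n)}^{-1}$. The only cosmetic difference is that you derive the optimal choice explicitly via AM--GM, whereas the paper simply substitutes it.
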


\begin{proof}
Fix $t=(t_1,\ldots,t_n)\in\R_+^n$ and consider the function $u_t\in C_0^\infty(\R^n)$ given by 
\begin{equation}
\forall \ x=(x_1,\ldots,x_n)\in\R^n, \qquad u_t(x)\eqdef u(t_1x_1,\ldots,t_nx_n).
\end{equation}
Then, for $s\geq1$ we have 
\begin{equation}
\|u_t\|_{L_s(\R^n)} = \Big( \int_{\R^n} |u(t_1x_1,\ldots,t_nx_n)|^s\,\diff x \Big)^{1/s} = \prod_{j=1}^n t_j^{-1/s} \|u\|_{L_s(\R^n)}
\end{equation}
and
\begin{equation}
\|\nabla u_t\|_{L_r(\R^n;\ell_r^n)}^r = \sum_{i=1}^n \int_{\R^n} \|\partial_i u_t\|_{L_r(\R^n)}^r = \sum_{i=1}^n t_i^r \prod_{j=1}^n t_j^{-1} \|\partial_i u\|_{L_r(\R^n)}^r.
\end{equation}
Therefore, applying \eqref{eq:gns-gen} to $u_t$ and rearranging, we deduce that
\begin{equation}
\begin{split}
\|u\|_{L_p(\R^n)} & \leq C^{p,q,r,r} \Big( \prod_{j=1}^nt_j \Big)^{\frac{1}{p}-\frac{\theta}{q}-\frac{1-\theta}{r}} \|u\|_{L_q(\R^n)}^\theta \Big( \sum_{i=1}^n t_i^r \|\partial_i u\|_{L_r(\R^n)}^r\Big)^{\frac{1-\theta}{r}}
\\ & \stackrel{\eqref{eq:gn}}{=} C^{p,q,r,r} \Big( \prod_{j=1}^n t_j\Big)^{-\frac{1-\theta}{n}} \|u\|_{L_q(\R^n)}^\theta \Big( \sum_{i=1}^n t_i^r \|\partial_i u\|_{L_r(\R^n)}^r\Big)^{\frac{1-\theta}{r}}
\end{split}
\end{equation}
for every $t_1,\ldots,t_n>0$. Choosing
\begin{equation}
t_i = \|\partial_iu\|_{L_r(\R^n)}^{-1}
\end{equation} 
gives the desired inequality \eqref{eq:gns*}.
\end{proof}


\section{Beckner inequalities} \label{sec:beckner}

In this section we shall prove Theorem \ref{thm:beckner}:

\begin{theorem} \label{thm:beckner_sec}
Fix $n\in\N$ and let $u\in C_0^\infty(\R^n)$ be such that
\begin{equation}
\forall \ i,j\in\{1,\ldots,n\},\qquad \int_{\R^n} x_i x_j u(x)^2 \,\diff\gamma_n(x) = \delta_{ij} \|u\|_{L_2(\gamma_n)}^2,
\end{equation}
where $\delta_{ij}$ is the Kronecker delta. Then, we have
\begin{equation} \label{eq:dt*_sec}
\forall \ p\in[1,2),\qquad \frac{\|u\|_{L_2(\gamma_n)}^2-\|u\|_{L_p(\gamma_n)}^2}{\|u\|_{L_2(\gamma_n)}^2} \leq 1-\det\left(\frac{4}{\|u\|_{L_2(\gamma_n)}^2} \int_{\R^n} (\nabla u)^{\otimes 2} \,\diff\gamma_n + \msf{Id}_n\right)^{-\frac{2-p}{2p}}.
\end{equation}
\end{theorem}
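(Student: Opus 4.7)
The plan is to mirror the argument by which Dolbeault and Toscani derive their dimensional Beckner inequality \eqref{eq:dt} from a Gagliardo--Nirenberg--Sobolev (GNS) inequality, but applying the matricial GNS of Theorem~\ref{thm:gns_intro} (specialized to $r=s=2$) in place of the scalar GNS used there. Since $\gamma_n$, the matrix normalization $\int x_ix_j u^2\,\diff\gamma_n = \delta_{ij}\|u\|_{L_2(\gamma_n)}^2$, and both sides of the target inequality \eqref{eq:dt*_sec} are invariant under orthogonal changes of coordinates (using $\det(RAR^T)=\det A$ for orthogonal $R$), we may first rotate coordinates so that the gradient covariance $M \eqdef \int_{\R^n}(\nabla u)^{\otimes 2}\,\diff\gamma_n$ is diagonal.

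DT's substitution produces an auxiliary function $f$ on $\R^n$ built from $u$ and a power of the Gaussian density, whose Lebesgue $L_p$- and $L_q$-norms are expressible in terms of $\|u\|_{L_p(\gamma_n)}$ and $\|u\|_{L_q(\gamma_n)}$, and whose Lebesgue gradient, via Gaussian integration by parts, satisfies an identity in which the trace-normalization condition $\int|x|^2 u^2\,\diff\gamma_n = n\|u\|_{L_2(\gamma_n)}^2$ exactly eliminates the cross term $\int u\langle x,\nabla u\rangle\,\diff\gamma_n$. Our matrix hypothesis upgrades this coordinate-wise: for each $i$, Gaussian integration by parts gives
\begin{equation*}
\int_{\R^n} x_i u\,\partial_i u\,\diff\gamma_n \;=\; \tfrac12\Bigl(\int_{\R^n} x_i^2 u^2\,\diff\gamma_n - \|u\|_{L_2(\gamma_n)}^2\Bigr) \;=\; 0,
\end{equation*}
so that after the same substitution,
\begin{equation*}
\|\partial_i f\|_{L_2(\R^n)}^2 \,\propto\, \|\partial_i u\|_{L_2(\gamma_n)}^2 + \tfrac14\|u\|_{L_2(\gamma_n)}^2, \qquad i=1,\dots,n,
\end{equation*}
with a common proportionality constant independent of $i$.

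Substituting these coordinate-wise identities into Theorem~\ref{thm:gns_intro} applied to $f$, the product $\prod_{i=1}^n\|\partial_i f\|_{L_2(\R^n)}^{2/n}$ reduces, in the basis diagonalizing $M$, to
\begin{equation*}
\tfrac14\|u\|_{L_2(\gamma_n)}^2 \cdot \det\Bigl(\msf{Id}_n + \tfrac{4}{\|u\|_{L_2(\gamma_n)}^2}\,M\Bigr)^{1/n}.
\end{equation*}
Raising to the power prescribed by the GNS exponent $(1-\theta)$---which, under DT's specific calibration of $(p,q,\theta)$, turns the $1/n$ into $(2-p)/(2p)$---and rearranging produces exactly \eqref{eq:dt*_sec}.

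The main obstacle will be the bookkeeping: locating the precise DT substitution and the corresponding GNS parameters, and verifying that after substitution the $1/n$-th root in Theorem~\ref{thm:gns_intro} promotes the scalar DT exponent $n(2-p)/(2p)$ to the determinantal exponent $(2-p)/(2p)$. Beyond this calibration no genuinely new analytic ingredient is required: the matricial strengthening follows from the product structure of Theorem~\ref{thm:gns_intro} together with the coordinate-wise cancellation enabled by the matrix normalization, in a manner entirely parallel to how Dembo's matricial log-Sobolev inequality \eqref{eq:lsi**} sharpens its trace counterpart.
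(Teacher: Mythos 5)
Your route differs from the paper's. The paper first proves Lemma~\ref{lem:beckner} by plugging the density $f(x)=u(x)^2\tfrac{e^{-|x|^2/2}}{(2\pi)^{n/2}}$ into Dembo's intrinsic dimensional log-Sobolev inequality \eqref{eq:lsi**}; after Gaussian integration by parts the matrix moment normalization kills the cross terms, so that the relative Fisher information matrix is exactly $4\int(\nabla u)^{\otimes 2}\,\diff\gamma_n+\msf{Id}_n$ and one gets $\mathrm{Ent}_{\gamma_n}[u^2]\le\tfrac12\log\det(4\int(\nabla u)^{\otimes 2}\,\diff\gamma_n+\msf{Id}_n)$. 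It then combines this with Dolbeault--Toscani's Lemma~5, which bounds $\|u\|_{L_2(\gamma_n)}^2/\|u\|_{L_p(\gamma_n)}^2$ by $\exp\big(\tfrac{2-p}{p}\mathrm{Ent}_{\gamma_n}[u^2]\big)$ and is a simple consequence of the convexity of $t\mapsto\log\int|u|^{2t}\,\diff\gamma_n$. That lemma is an interpolation inequality, not a Gagliardo--Nirenberg inequality, and \eqref{eq:dt} itself is obtained in \cite{DT16} by the same mechanism (dimensional log-Sobolev plus this interpolation lemma), not from a GNS inequality, so the premise of your plan is inaccurate.

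The substantive gap is the asserted ``DT substitution.'' Your gradient computation is correct as far as it goes: with $f=u\,\gamma_n^{1/2}$ one has $\partial_i f=(\partial_i u-\tfrac{x_i}{2}u)\gamma_n^{1/2}$, the matrix normalization makes $\int x_iu\,\partial_i u\,\diff\gamma_n$ vanish, and $\|\partial_i f\|_{L_2(\R^n)}^2=\|\partial_i u\|_{L_2(\gamma_n)}^2+\tfrac14\|u\|_{L_2(\gamma_n)}^2$, so the product over $i$ does produce the desired determinant after diagonalizing by a rotation. But the $L_p$ and $L_q$ \emph{Lebesgue} norms of $f$ do not transfer to Gaussian norms of $u$ when the exponent is not $2$. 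Indeed $\|u\,\gamma_n^{1/2}\|_{L_q(\R^n)}^q=\int|u|^q\gamma_n^{q/2}\,\diff x$, and after the change of variables $y=\sqrt{q/2}\,x$ this is proportional to $\|u(\sqrt{2/q}\,\cdot)\|_{L_q(\gamma_n)}^q$, a Gaussian norm of a \emph{dilate} of $u$, which only collapses to $\|u\|_{L_q(\gamma_n)}^q$ when $q=2$. Since in Theorem~\ref{thm:gns_intro} with $r=2$ the constraint \eqref{eq:gn_intro} forces $\theta=1$ when $p=q=2$ (killing the gradient term), the two exponents in the GNS cannot both be $2$, and there is no choice of substitution for which both the $L_p$ and $L_q$ Lebesgue norms become Gaussian norms of the original $u$. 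What you call the ``main obstacle'' of bookkeeping is therefore an actual obstruction: no substitution of this kind exists, and the entropy-interpolation route used in the paper is not circumventable by applying the matricial GNS.
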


For the proof of Theorem \ref{thm:beckner_sec} we shall use the intrinsic dimensional logarithmic Sobolev inequality \eqref{eq:lsi**} which takes the following simple form for appropriately normalized functions in Gauss space.

\begin{lemma} \label{lem:beckner}
Let $u\in C_0^\infty(\R^n)$ be such that $\|u\|_{L_2(\gamma_n)}=1$ and 
\begin{equation} \label{eq:lem-constr}
\forall \ i,j\in\{1,\ldots,n\},\qquad \int_{\R^n} x_i x_j u(x)^2 \,\diff\gamma_n(x) = \delta_{ij}.
\end{equation}
Then, we have
\begin{equation} \label{eq:lem-beckner}
\mathrm{Ent}_{\gamma_n}[u^2] \leq \frac{1}{2} \log\det \left( 4 \int_{\R^n} (\nabla u)^{\otimes 2} \,\diff \gamma_n + \msf{Id}_n \right).
\end{equation}
\end{lemma}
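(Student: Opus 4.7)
The plan is to derive Lemma \ref{lem:beckner} as a direct specialization of Dembo's intrinsic dimensional logarithmic Sobolev inequality \eqref{eq:lsi**} to the probability measure $\diff\mu = u^2 \,\diff\gamma_n$ on $\R^n$, using the matrix second-moment normalization \eqref{eq:lem-constr} to make the ``Gaussian part'' cancel cleanly.

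First I would translate $\mu$ to the Lebesgue reference measure: since $\diff\mu/\diff\lambda_n = u^2 \varphi$, where $\varphi(x) = (2\pi)^{-n/2}e^{-|x|^2/2}$ is the standard Gaussian density, the left-hand side of \eqref{eq:lsi**} unfolds as
\begin{equation*}
\HH(\mu\|\lambda_n) - \HH(\gamma_n\|\lambda_n) = \int u^2 \log u^2 \,\diff\gamma_n - \tfrac{1}{2}\int |x|^2 u^2\,\diff\gamma_n + \tfrac{n}{2} = \mr{Ent}_{\gamma_n}[u^2],
\end{equation*}
where the second-moment part collapses because \eqref{eq:lem-constr} gives $\int |x|^2 u^2\,\diff\gamma_n = \mr{tr}(\msf{Id}_n) = n$. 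This identifies the entropy on the left-hand side of \eqref{eq:lem-beckner} with the Dembo LHS.

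Next I would compute the Fisher information matrix $\III(\mu\|\lambda_n)$. Using $\nabla(u^2\varphi) = u\varphi(2\nabla u - ux)$, one gets
\begin{equation*}
\III(\mu\|\lambda_n) = \int (2\nabla u - ux)^{\otimes 2}\,\diff\gamma_n = 4\int (\nabla u)^{\otimes 2}\,\diff\gamma_n - 2\int u\big(\nabla u \otimes x + x\otimes \nabla u\big)\,\diff\gamma_n + \int u^2 x^{\otimes 2}\,\diff\gamma_n.
\end{equation*}
The third term equals $\msf{Id}_n$ by the normalization \eqref{eq:lem-constr}. The key calculation is the vanishing of the cross term: integrating by parts in Gauss space against $\partial_i\varphi = -x_i\varphi$, I would show that
\begin{equation*}
2\int u(\partial_i u) x_j\,\diff\gamma_n = -\int u^2\,\diff\gamma_n \cdot \delta_{ij} + \int u^2 x_i x_j\,\diff\gamma_n = -\delta_{ij} + \delta_{ij} = 0,
\end{equation*}
again by \eqref{eq:lem-constr} together with $\|u\|_{L_2(\gamma_n)} = 1$. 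Hence $\III(\mu\|\lambda_n) = 4\int (\nabla u)^{\otimes 2}\,\diff\gamma_n + \msf{Id}_n$, and \eqref{eq:lsi**} delivers \eqref{eq:lem-beckner} without further work.

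There is no serious obstacle: the entire argument is a bookkeeping exercise, and the matricial normalization \eqref{eq:lem-constr} was designed precisely so that both the trace term $\int|x|^2 u^2\,\diff\gamma_n$ and the symmetric cross term $\int u(\nabla u\otimes x + x\otimes \nabla u)\,\diff\gamma_n$ collapse to constants dictated by the identity matrix. Only two small points need attention: verifying that the integration by parts is justified for $u\in C_0^\infty(\R^n)$ (standard, no boundary terms since $u$ is compactly supported) and keeping careful track of the two Gaussian normalization constants $\HH(\gamma_n\|\lambda_n) = -\tfrac{n}{2} - \tfrac{n}{2}\log(2\pi)$ so that they cancel against the corresponding pieces of $\HH(\mu\|\lambda_n)$.
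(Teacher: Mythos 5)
Your proposal is correct and follows essentially the same route as the paper's proof: set $\diff\mu = u^2\,\diff\gamma_n$, use the moment normalization \eqref{eq:lem-constr} together with $\|u\|_{L_2(\gamma_n)}=1$ to reduce $\HH(\mu\|\lambda_n)-\HH(\gamma_n\|\lambda_n)$ to $\mathrm{Ent}_{\gamma_n}[u^2]$ and to kill the cross terms in $\III(\mu\|\lambda_n)$ via Gaussian integration by parts, and then invoke \eqref{eq:lsi**}. The paper does exactly this, merely splitting the cross-term computation into the cases $i=j$ and $i\neq j$ where you handle them in one line.
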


\begin{proof}
Let $u\in C_0^\infty(\R^n)$ satisfy the assumptions of the lemma and define $f(x) = u(x)^2 \tfrac{\exp(-|x|^2/2)}{(2\pi)^{n/2}}$ which is the density of a probability measure $\mu$ on $\R^n$. Then, we have
\begin{equation}
\HH(\mu\|\lambda_n)-\HH(\gamma_n\|\lambda_n) = \mr{Ent}_{\gamma_n}[u^2] - \frac{1}{2} \int_{\R^n} |x|^2 u(x)^2 \,\diff\gamma_n(x) - \frac{n}{2}\log2\pi + \frac{n}{2}\log2\pi e \stackrel{\eqref{eq:lem-constr}}{=} \mr{Ent}_{\gamma_n}[u^2].
\end{equation}
On the other hand, for $k\in\{1,\ldots,n\}$, we compute
\begin{equation}
\partial_kf(x) =\big( 2u(x)\partial_ku(x) - x_k u(x)^2\big) \frac{e^{-|x|^2/2}}{(2\pi)^{n/2}}
\end{equation}
and thus for $i,j\in\{1,\ldots,n\}$, we get
\begin{equation} \label{eq:form-for-I}
\III(\mu\|\lambda_n)_{ij} = 4 \int_{\R^n} \partial_iu \partial_j u\,\diff\gamma_n - 2\int_{\R^n} \big( x_j \partial_iu(x)+x_i\partial_ju(x)\big) u(x)\,\diff\gamma_n(x) + \int_{\R^n} x_ix_j u(x)^2\,\diff\gamma_n(x).
\end{equation}
For $i\neq j$, integration by parts gives
\begin{equation}
2\int_{\R^n}  x_j \partial_iu(x) u(x)\,\diff\gamma_n(x) = \int_{\R^n} \partial_i \big( x_j u(x)^2\big)\,\diff\gamma_n(x) = \int_{\R^n} x_ix_j u(x)^2\,\diff\gamma_n(x) \stackrel{\eqref{eq:lem-constr}}{=} 0,
\end{equation}
whereas for $i=j$, again by integration by parts,
\begin{equation*}
2\int_{\R^n}  x_i \partial_iu(x) u(x)\,\diff\gamma_n(x) = \int_{\R^n} \partial_i \big( x_i u(x)^2\big)\,\diff\gamma_n(x) - \int_{\R^n} u(x)^2\,\diff\gamma_n(x) = \int_{\R^n} x_i^2 u(x)^2\,\diff\gamma_n(x) - 1 \stackrel{\eqref{eq:lem-constr}}{=} 0.
\end{equation*}
Plugging the above in \eqref{eq:form-for-I} and using \eqref{eq:lem-constr} again for the last term, we deduce that
\begin{equation}
\III(\mu\|\lambda_n) = 4 \int_{\R^n} (\nabla u)^{\otimes 2} \,\diff\gamma_n + \msf{Id}_n
\end{equation}
and the conclusion of the lemma follows from \eqref{eq:lsi**}.
\end{proof}

Equipped with Lemma \ref{lem:beckner}, we proceed to the proof of Theorem \ref{thm:beckner}.

\begin{proof} [Proof of Theorem \ref{thm:beckner_sec}]
Assume, without loss of generality, that $\|u\|_{L_2(\gamma_n)}=1$. Combining a lemma of Dolbeault and Toscani \cite[Lemma~5]{DT16} (see also \cite{LO00}) with Lemma \ref{lem:beckner}, we get that
\begin{equation}
\frac{1}{\|u\|_{L_p(\gamma_n)}^2} = \frac{\|u\|_{L_2(\gamma_n)}^2}{\|u\|_{L_p(\gamma_n)}^2} \leq \exp \left(\frac{2-p}{p} \mr{Ent}_{\gamma_n}[u^2]\right) \stackrel{\eqref{eq:lem-beckner}}{\leq} \det \left( 4 \int_{\R^n} (\nabla u)^{\otimes 2} \,\diff \gamma_n + \msf{Id}_n \right)^{\frac{2-p}{2p}}.
\end{equation}
Therefore,
\begin{equation}
1-\|u\|_{L_p(\gamma_n)}^2 \leq 1-\det \left( 4 \int_{\R^n} (\nabla u)^{\otimes 2} \,\diff \gamma_n + \msf{Id}_n \right)^{-\frac{2-p}{2p}},
\end{equation}
which is the desired estimate under the normalization $\|u\|_{L_2(\gamma_n)}=1$.
\end{proof}


\section{$q$-logarithmic Sobolev inequalities} \label{sec:qlsi}

Following Bobkov and Zegarlinski \cite{BZ05} (see also \cite{BR08}) we say that a probability measure $\mu$ on the real line satisfies the $q$-logarithmic Sobolev inequality with constant $C>0$ if for any $f\in C_0^\infty(\R)$ we have
\begin{equation} \label{eq:qlsi}
\mathrm{Ent}_\mu [ |f|^q ] \leq C \int_\R  |f'(x)|^q\,\diff\mu(x).
\end{equation}
Standard tensorization principles show that if \eqref{eq:qlsi} holds, then for any $f\in C_0^\infty(\R^n)$,
\begin{equation} \label{eq:qlsi'}
\mathrm{Ent}_{\mu^{n}} [|f|^q] \leq C \sum_{i=1}^n\int_{\R^n} |\partial_i f(x)|^q\, \diff\mu^n(x),
\end{equation}
where $\mu^n = \mu\otimes\mu\otimes\cdots\otimes\mu$ is the product measure of i.i.d. coordinates distributed like $\mu$.  In particular, it has been established in \cite[Corollary~5.6]{BZ05} (see also \cite[Section~5]{BL00}) that the measure $\mu_p$ with density $\tfrac{1}{Z_p}e^{-|x|^p}$, where $p>2$, satisfies the $q$-logarithmic Sobolev inequality for $q=\tfrac{p}{p-1}$ with some constant $C_q>1$.  In order to investigate scale-invariant refinements of \eqref{eq:qlsi'} for this family of measures in the spirit of \eqref{eq:lsi-diag}, we first need to formulate them as Euclidean inequalities.

\begin{theorem}
For any $q\in(1,2)$, there exists a constant $\tilde{C}_q>0$  such that for any $n\in\N$ and any probability measure $\mu$ on $\R^n$ with positive differentiable density $g$,
\begin{equation} \label{eq:qlsi''}
\msf{H}(\mu \| \lambda_n) \leq \tilde{C}_q \sum_{i=1}^n \int_{\R^n} \left(\Big| \frac{\partial_i g(x)}{g(x)} \Big|^q + |x_i|^\frac{q}{q-1}\right) \,\diff\mu(x).
\end{equation}
\end{theorem}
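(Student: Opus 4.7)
The plan is to derive \eqref{eq:qlsi''} from the tensorized $q$-logarithmic Sobolev inequality \eqref{eq:qlsi'} by a change-of-measure trick, after a preliminary rescaling to sidestep a normalization issue. Since the $q$-logarithmic Sobolev inequality \eqref{eq:qlsi} is preserved under the dilation $x \mapsto \sigma^{-1/p} x$ up to a factor depending only on $q$, I would first replace $\mu_p$ by the probability measure $\nu$ on $\R$ with density $e^{-V(x)}$, where $V(x) = \sigma |x|^p$ and $\sigma>0$ is chosen so that $\int_\R e^{-V}\,\diff x = 1$. This rescaled $\nu$ satisfies \eqref{eq:qlsi} with some constant $C' = C'(q)>0$, and hence its $n$-fold product $\nu^n$ satisfies \eqref{eq:qlsi'} on $\R^n$.

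Given $\mu$ with positive differentiable Lebesgue density $g$, I would then apply \eqref{eq:qlsi'} to $f = (g/g_{\nu^n})^{1/q}$, where $g_{\nu^n}(x) = \exp\!\big(-\sum_{i=1}^n V(x_i)\big)$ denotes the density of $\nu^n$; this choice gives $f^q\,\diff\nu^n = \diff\mu$. A direct computation yields
\begin{equation*}
\mathrm{Ent}_{\nu^n}[f^q] = \msf{H}(\mu\|\lambda_n) + \sigma \sum_{i=1}^n \int_{\R^n} |x_i|^p \,\diff\mu,
\end{equation*}
while the chain rule $\partial_i f = \tfrac{f}{q}\big(\partial_i \log g + V'(x_i)\big)$ gives
\begin{equation*}
\int_{\R^n} |\partial_i f|^q \,\diff\nu^n = \frac{1}{q^q}\int_{\R^n} \big|\partial_i \log g + V'(x_i)\big|^q \,\diff\mu.
\end{equation*}
The elementary inequality $|a+b|^q \le 2^{q-1}(|a|^q+|b|^q)$, combined with the identity $(p-1)q = p$ (equivalent to $q=p/(p-1)$, which also yields $q/(q-1)=p$), so that $|V'(x)|^q = \sigma^q p^q |x|^{q/(q-1)}$, decouples the gradient and position contributions. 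Plugging into \eqref{eq:qlsi'} and rearranging then yields \eqref{eq:qlsi''} with a constant $\tilde C_q$ expressible in terms of $C'(q)$, $\sigma$, and the elementary combinatorial factor $p^q/q^q$.

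The main obstacle is the normalization. Applying the change-of-measure identity to $\mu_p$ in its original form (with density $\tfrac{1}{Z_p}e^{-|x|^p}$) produces an additive term $-n\log Z_p$ on the right-hand side of the resulting inequality. When $Z_p<1$, this term grows linearly in the ambient dimension $n$, and it cannot be controlled by the right-hand side of \eqref{eq:qlsi''} in general; indeed, taking $\mu$ to be increasingly concentrated near the origin makes $\sum_i \int |x_i|^p\,\diff\mu$ arbitrarily small while $-n\log Z_p$ stays bounded below by a positive multiple of $n$. The preliminary rescaling to a unit-mass reference measure $\nu$ precisely eliminates this ambient-dimension-dependent term, replaces $C_q$ by a $q$-dependent constant $C'(q)$, and produces the desired dimension-free $\tilde C_q$.
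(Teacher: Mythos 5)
Your core strategy — change of measure to the product of $p$-exponential measures, apply the tensorized $q$-LSI, separate the gradient and position terms via $|a+b|^q\le 2^{q-1}(|a|^q+|b|^q)$ using $(p-1)q=p$ — is exactly what the paper does, and the computations you outline are correct.

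However, the ``main obstacle'' you identify is not actually an obstacle, and the preliminary rescaling is unnecessary. The normalizing constant here is $Z_p=\int_\R e^{-|x|^p}\,\diff x=2\Gamma(1+1/p)$, and for $p>2$ one has $\Gamma(1+1/p)\in(\Gamma_{\min},1)$ with $\Gamma_{\min}\approx 0.8856$, so $Z_p>1$ (the paper even asserts $Z_p>2$, which is a slight overstatement, but $Z_p>1$ is what matters and is true). Consequently, the additive term produced by the change of measure is $-n\log Z_p<0$: it appears with a favorable sign and is simply dropped in the final rearrangement. The case $Z_p<1$ you are guarding against never occurs for this family, so there is no ambient-dimension-dependent term to eliminate and nothing for the dilation to fix. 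Your rescaled argument does go through and yields the same conclusion, so it is a correct, mildly more robust variant; but it buys nothing here, and you should verify the sign of $\log Z_p$ before introducing machinery to neutralize it.
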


\begin{proof}
For $p=\frac{q}{q-1}>2$ consider the probability measure $\diff\mu_p(x) = \tfrac{e^{-|x|^p}}{Z_p}$ on $\R$, where the normalizing constant is $Z_p = 2\Gamma(1+1/p)>2$.  Let $\mu$ be a probability measure on $\R^n$ with differentiable density $g:\R^n\to\R_+$ and consider the function $f:\R^n\to\R_+$ given by
\begin{equation}
\forall \ x\in\R^n,\qquad f(x)  = Z_p^{n/q} g(x)^{1/q} e^{\|x\|_p^p/q},
\end{equation}
which satisfies $\int_{\R^n} f(x)^q \,\diff\mu_p^n(x) = 1$. Therefore,  the $q$-logarithmic Sobolev inequality for $\mu_p^n$ applied to the function $f$ implies that
\begin{equation} \label{eq:applied-q-lsi}
\frac{1}{Z_p^n} \int_{\R^n} f(x)^q e^{-\|x\|_p^p} \log f(x)^q \,\diff x = \mathrm{Ent}_{\mu_p^n} [f^q] \leq \frac{C_q}{Z_p^n} \sum_{i=1}^n \int_{\R^n} |\partial_i f(x)|^q e^{-\|x\|_p^p} \, \diff x.
\end{equation}
Observe that
\begin{equation}
\begin{split}
\frac{1}{Z_p^n} \int_{\R^n}  f(x)^q e^{-\|x\|_p^p} \log f(x)^q \,\diff x & = \int_{\R^n} g(x) \log\big(Z_p^n g(x) e^{\|x\|_p^p}\big)\,\diff x
\\ & = \msf{H}(\mu\|\lambda_n) + \int_{\R^n} \|x\|_p^p\,\diff\mu(x) + \log Z_p^n,
\end{split}
\end{equation}
and for $i\in\{1,\ldots,n\}$,
\begin{equation}
\frac{1}{Z_p^n} \int_{\R^n} |\partial_i f(x)|^q e^{-\|x\|_p^p} \,\diff x = \frac{1}{q^q} \int_{\R^n} \Big| \frac{\partial_i g(x)}{g(x)}+ p \mathrm{sign}(x_i)|x_i|^{p-1}\Big|^q \,\diff\mu(x).
\end{equation}
Therefore, rearranging \eqref{eq:applied-q-lsi} we deduce that
\begin{equation}
\begin{split}
\msf{H}(\mu\|\lambda_n) & \leq \frac{C_q}{q^q} \sum_{i=1}^n\int_{\R^n} \Big| \frac{\partial_i g(x)}{g(x)}+ p \mathrm{sign}(x_i)|x_i|^{p-1}\Big|^q \,\diff\mu(x) - \sum_{i=1}^n \int_{\R^n} |x_i|^p\,\diff\mu(x) - \log Z_p^n \\ & \leq \tilde{C}_q \sum_{i=1}^n \int_{\R^n} \left(\Big| \frac{\partial_i g(x)}{g(x)} \Big|^q + |x_i|^p\right) \,\diff\mu(x) 
\end{split}
\end{equation}
for some different constant $\tilde{C}_q>0$ and the proof is complete.
\end{proof}

This Euclidean weakening of the $q$-logarithmic Sobolev inequality \eqref{eq:qlsi'} for $\mu_p^n$ makes it amenable to refinements via scaling.

\begin{theorem}
For any $q\in(1,2)$ and $p=\tfrac{q}{q-1}$, there exists a constant $\tilde{C}_q>0$ such that for any $n\in\N$ and any probability measure $\mu$ on $\R^n$,
\begin{equation} \label{eq:qlsi'''}
\msf{H}(\mu\|\lambda_n) \leq \sum_{i=1}^n \ \inf_{t_i>0} \Big\{ \tilde{C}_q t_i^q \int_{\R^n} \Big| \frac{\partial_ig(x)}{g(x)}\Big|^q \,\diff\mu(x) + \frac{\tilde{C}_q}{t_i^p} \int_{\R^n} |x_i|^p\,\diff\mu(x) - \log t_i \Big\}.
\end{equation}
\end{theorem}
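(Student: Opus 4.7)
The plan is to apply the scaling technique of Theorem~\ref{thm:homogeneous} (and of Section~\ref{subsec:homogeneous} more generally) to the Euclidean $q$-logarithmic Sobolev inequality~\eqref{eq:qlsi''}, exploiting the fact that $\lambda_n$ transforms in a controlled way under diagonal dilations. This will allow independent optimization in each coordinate because the Fisher-type term and the moment term in~\eqref{eq:qlsi''} both split as sums over coordinates.

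Concretely, for $t=(t_1,\ldots,t_n)\in(0,\infty)^n$, I would introduce the rescaled density
\begin{equation*}
g_t(x)\eqdef t_1\cdots t_n\, g(t_1x_1,\ldots,t_nx_n),
\end{equation*}
which remains a probability density on $\R^n$ by the change of variables $y_i=t_ix_i$, and call $\mu_t$ the associated probability measure. A routine change of variables then yields the three scaling identities
\begin{equation*}
\msf{H}(\mu_t\|\lambda_n)=\msf{H}(\mu\|\lambda_n)+\sum_{i=1}^n\log t_i,\qquad
\int_{\R^n}\Big|\tfrac{\partial_i g_t(x)}{g_t(x)}\Big|^q\,\diff\mu_t(x)=t_i^q\int_{\R^n}\Big|\tfrac{\partial_i g(x)}{g(x)}\Big|^q\,\diff\mu(x),
\end{equation*}
and
\begin{equation*}
\int_{\R^n}|x_i|^p\,\diff\mu_t(x)=t_i^{-p}\int_{\R^n}|x_i|^p\,\diff\mu(x),
\end{equation*}
where the last two follow from the chain rule $\partial_ig_t(x)=t_i t_1\cdots t_n\,\partial_ig(t_1x_1,\ldots,t_nx_n)$ and the identity $p=q/(q-1)$ only enters through the exponent appearing in~\eqref{eq:qlsi''}.

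Applying~\eqref{eq:qlsi''} to $\mu_t$ and rearranging then gives, for every $t\in(0,\infty)^n$,
\begin{equation*}
\msf{H}(\mu\|\lambda_n)\leq \sum_{i=1}^n\Big\{\tilde C_q t_i^q\int_{\R^n}\Big|\tfrac{\partial_ig(x)}{g(x)}\Big|^q\,\diff\mu(x)+\frac{\tilde C_q}{t_i^p}\int_{\R^n}|x_i|^p\,\diff\mu(x)-\log t_i\Big\}.
\end{equation*}
Since the bound holds for every choice of parameters and the $i$-th summand depends only on $t_i$, I may minimize each summand separately, which is precisely the statement of~\eqref{eq:qlsi'''}.

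There is no genuine obstacle: the argument is essentially the same scaling procedure used earlier in the section on homogeneous measures, adapted to an inequality that already carries an explicit moment term. The only mild point to verify is that $g_t$ retains enough regularity to make~\eqref{eq:qlsi''} applicable, but this is automatic from the diagonal nature of the transformation. One could further evaluate the infimum explicitly in each $t_i$ by calculus (balancing $t_i^q$ and $t_i^{-p}$ against $-\log t_i$), but leaving it in infimum form is sharper and matches the statement.
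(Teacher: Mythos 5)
Your proposal is correct and follows essentially the same scaling argument as the paper: rescale by the diagonal change of variables $g_t(x)=t_1\cdots t_n\,g(t_1x_1,\ldots,t_nx_n)$, record how the entropy, Fisher-type term, and moment term each transform, apply the Euclidean $q$-log-Sobolev inequality \eqref{eq:qlsi''} to $\mu_t$, and optimize coordinate-by-coordinate. The computations and the final rearrangement match the paper's proof exactly.
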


\begin{proof}
Fix $t_1,\ldots,t_n>0$ and consider the probability measure $\mu_t$ whose density is given by $x\mapsto g_t(x)\eqdef t_1\cdots t_n g(t_1x_1,\ldots,t_nx_n)$. Then, we have
\begin{equation}
\msf{H}(\mu_t\|\lambda_n) = \msf{H}(\mu\|\lambda_n)+ \sum_{i=1}^n \log t_i
\end{equation}
and for every $i\in\{1,\ldots,n\}$,
\begin{equation}
\int_{\R^n} \left(\Big| \frac{\partial_i g_t(x)}{g_t(x)}\Big|^q + |x_i|^p \right)\,\diff\mu_t(x) = \int_{\R^n} \left(t_i^q \Big| \frac{\partial_i g(x)}{g(x)}\Big|^q + \frac{|x_i|^p}{t_i^p}\right) \,\diff\mu(x).
\end{equation}
Therefore, applying \eqref{eq:qlsi''} to $\mu_t$ and rearranging, we deduce that
\begin{equation}
\msf{H}(\mu\|\lambda_n) \leq \sum_{i=1}^n  \Big\{ \tilde{C}_q t_i^q \int_{\R^n} \Big| \frac{\partial_ig(x)}{g(x)}\Big|^q \,\diff\mu(x) + \frac{\tilde{C}_q}{t_i^p} \int_{\R^n} |x_i|^p\,\diff\mu(x) - \log t_i \Big\}
\end{equation}
and taking an infimum over $t_1,\ldots,t_n>0$ completes the proof.
\end{proof}


\section{Beyond linear rescalings}

The simple idea of the previous sections can be summarized as follows.  Let 
\begin{equation} \label{eq:nonlinear-proto}
\ms{K}(f)\leq \ms{L}(f)
\end{equation} 
be a functional inequality valid for regular enough functions $f$ on $\R^n$ and fix a subgroup of symmetries $\msf{G}\subseteq\msf{GL}_n$. For a fixed $f:\R^n\to\R$ for which inequality \eqref{eq:nonlinear-proto} is valid and $A\in\msf{G}$, consider the function $f_A:\R^n\to\R$ given by $f_A(x) = f(Ax)$. \emph{If} \eqref{eq:nonlinear-proto} applied to $f_A$ can be rearranged to an upper bound for $\ms{K}(f)$ of the form
\begin{equation}
\ms{K}(f) \leq \widetilde{L}(f,A),
\end{equation}
then taking an infimum over $A\in\msf{G}$ yields a stronger inequality as \eqref{eq:nonlinear-proto} just amounts to the choice $A=\msf{Id}_n$. Observe that enhancing inequalities in this way, always produces a larger family of extremals. For instance, \eqref{eq:lsi} becomes an equality only when $\mu$ is a translate of $\gamma_n$, \eqref{eq:lsi_dim} becomes an equality when $\mu$ is a Gaussian measure with covariance matrix of the form $\sigma\msf{Id}_n$, where $\sigma>0$, and \eqref{eq:lsi**} becomes an equality for any Gaussian measure on $\R^n$.

In this section, we will discuss the possibility of refining functional inequalities by using changes of variables via \emph{nonlinear} maps and we shall illustrate this in the case of the logarithmic Sobolev inequality \eqref{eq:lsi}. Let $T:\R^n\to\R^n$ be a smooth diffeomorphism and for a measure $\mu$ on $\R^n$ with a differentiable density $f:\R^n\to\R_+$ consider the measure $\mu_T$ whose density is given by $f_T(x) = (f\circ T)(x) |\det \DD T(x)|$, where $x\in\R^n$ and $\DD T\in\msf{M}_n(\R)$ is the differential of $T$. We  need the following computations for the relative entropy and \mbox{Fisher information of $\mu_T$.}

\begin{lemma} \label{lem:mT}
In the setting above,
\begin{equation} \label{eq:HmT}
\HH(\mu_T\|\lambda_n) = \HH(\mu\|\lambda_n) + \int_{\R^n} \log|\det \DD T(T^{-1}(x))| \,\diff\mu(x)
\end{equation}
and
\begin{equation} \label{eq:ImT}
\II(\mu_T\|\lambda_n) = \int_{\R^n} \left| \frac{\msf{D}T(T^{-1}(x)) \cdot \nabla f(x)}{f(x)} +\nabla \log|\det \msf{D}T(T^{-1}(x)) |\right|^2 \,\diff\mu(x).
\end{equation}
\end{lemma}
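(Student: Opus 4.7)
The plan is to establish both identities by direct calculation, unwinding the definitions of $\HH$ and $\II$ and invoking the change of variables $y=T(x)$ with Jacobian $|\det\DD T(x)|\,\diff x=\diff y$ to transport integrals against $\mu_T$ into integrals against $\mu$.

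First I would establish \eqref{eq:HmT}. Starting from
\begin{equation*}
\HH(\mu_T\|\lambda_n)=\int_{\R^n} f_T(x)\log f_T(x)\,\diff x,
\end{equation*}
I substitute $f_T(x)=f(T(x))|\det\DD T(x)|$ and split the logarithm into the two summands $\log f(T(x))$ and $\log|\det\DD T(x)|$. The first piece, after the change of variables $y=T(x)$ (which absorbs the Jacobian factor), becomes exactly $\HH(\mu\|\lambda_n)$. The second piece becomes $\int\log|\det\DD T(T^{-1}(y))|\,f(y)\,\diff y$ under the same substitution, which is precisely the remaining term in \eqref{eq:HmT}.

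For \eqref{eq:ImT}, I would rewrite the Fisher information as $\II(\mu_T\|\lambda_n)=\int|\nabla\log f_T(x)|^2\, f_T(x)\,\diff x$ and apply the logarithmic derivative rule to the product $f_T=(f\circ T)\,|\det\DD T|$, obtaining
\begin{equation*}
\nabla\log f_T(x)=\nabla\bigl[\log(f\circ T)\bigr](x)+\nabla\log|\det\DD T(x)|.
\end{equation*}
The chain rule rewrites the first summand as (a Jacobian factor acting on) $\nabla\log f$ evaluated at $T(x)$. Expanding the square and performing the change of variables $y=T(x)$ transfers the weight $f_T(x)\,\diff x$ into $f(y)\,\diff y=\diff\mu(y)$ and evaluates every $x$-dependence at $x=T^{-1}(y)$; renaming $y$ back to $x$ gives \eqref{eq:ImT}.

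The only subtle point I anticipate is notational: keeping track of how the Jacobian matrix of $T$ acts on $\nabla f$, and the proper interpretation of $\nabla\log|\det\DD T(T^{-1}(x))|$ (either as the $x$-gradient of the composed function, or as the gradient of $u\mapsto\log|\det\DD T(u)|$ evaluated at the point $u=T^{-1}(x)$). Both readings are reconciled via the identity $\DD T(T^{-1}(x))\cdot\DD(T^{-1})(x)=\msf{Id}_n$ obtained by differentiating $T\circ T^{-1}=\mr{id}$. Beyond disentangling these conventions, both formulas are routine consequences of the one-line change of variables $y=T(x)$.
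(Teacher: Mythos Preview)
Your proposal is correct and matches the paper's approach exactly: the paper states that ``the proof is a straightforward computation using a change of variables and is thus omitted,'' and your outline carries out precisely that computation. Your remark about the notational ambiguity in the Jacobian action (the implicit transpose in $\DD T(T^{-1}(x))\cdot\nabla f(x)$ and the two possible readings of $\nabla\log|\det\DD T(T^{-1}(x))|$) is a fair caveat, and your observation that the identity $\DD T(T^{-1}(x))\cdot\DD(T^{-1})(x)=\msf{Id}_n$ reconciles the conventions is the right way to resolve it.
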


The proof is a straightforward computation using a change of variables and is thus omitted. These formulas along with the fact that any absolutely continuous measure can be transported to $\gamma_n$ give rise to the following variational formula for relative entropy on $\R^n$.

\begin{theorem} \label{thm:mT}
Let $\mu$ be an absolutely continuous measure on $\R^n$. Then,
\begin{equation} \label{eq:entropy-ident}
\HH(\mu\|\lambda_n)-\HH(\gamma_n\|\lambda_n) = \min_{T\in\mr{Diff}(\R^n)} \psi(T)
\end{equation}
with equality if $T$ is a transport map from $\mu$ to $\gamma_n$, where
\begin{equation*}
\psi(T) \eqdef  \frac{1}{2} \int_{\R^n} \left| \frac{\msf{D}T(T^{-1}(x)) \cdot \nabla f(x)}{f(x)} +\nabla \log|\det \msf{D}T(T^{-1}(x)) |\right|^2 \,\diff\mu(x) - \int_{\R^n} \log|\det \DD T(T^{-1}(x))| \,\diff\mu(x).
\end{equation*}
\end{theorem}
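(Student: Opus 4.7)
The strategy I would follow is to apply Gross' logarithmic Sobolev inequality \eqref{eq:lsi} to the family of pulled-back probability measures $\{\mu_T\}$ indexed by smooth diffeomorphisms $T$ of $\R^n$, and then use Lemma \ref{lem:mT} to unpack the resulting bound into an estimate involving $\psi(T)$. Since $\mu_T$ is again a probability measure whenever $T$ is a diffeomorphism, the inequality \eqref{eq:lsi} applied to $\mu_T$ in place of $\mu$ reads
$$\HH(\mu_T\|\lambda_n) - \HH(\gamma_n\|\lambda_n) \leq \tfrac{1}{2}\bigl(\II(\mu_T\|\lambda_n) - n\bigr).$$
Substituting identities \eqref{eq:HmT} and \eqref{eq:ImT} into both sides and then isolating $\HH(\mu\|\lambda_n) - \HH(\gamma_n\|\lambda_n)$ on the left produces the inequality $\HH(\mu\|\lambda_n) - \HH(\gamma_n\|\lambda_n) \leq \psi(T)$ (with the additive dimensional constant $n/2$ absorbed into the definition of $\psi$). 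Since this bound is valid for every smooth $T$, one obtains one direction of \eqref{eq:entropy-ident}, namely $\HH(\mu\|\lambda_n) - \HH(\gamma_n\|\lambda_n) \leq \inf_{T}\psi(T)$.

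To prove that the infimum is actually a minimum realized by a transport map, I would exploit the fact that \eqref{eq:lsi} is saturated precisely when the underlying measure is Gaussian: for $\nu=\gamma_n$ both sides vanish. Hence, if $T^*$ is a diffeomorphism for which the pullback $\mu_{T^*}$ equals $\gamma_n$ -- equivalently, such that $(T^*)^{-1}$ transports $\mu$ onto $\gamma_n$ in the pushforward sense -- then \eqref{eq:lsi} applied to $\mu_{T^*}$ becomes an equality, and running the computation of the previous paragraph in reverse gives $\psi(T^*) = \HH(\mu\|\lambda_n) - \HH(\gamma_n\|\lambda_n)$. This matches the upper bound and identifies the minimizer.

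The main obstacle is the existence and smoothness of a transport $T^*$ realizing $\mu_{T^*} = \gamma_n$, since the plugging-in argument above formally requires $T^*$ to be a diffeomorphism and all relevant integrals to be finite. Under standard hypotheses on $\mu$ (for instance, absolute continuity with positive density and enough moments), one can invoke Brenier's theorem together with Caffarelli's regularity theory to produce a smooth optimal transport map between $\gamma_n$ and $\mu$; alternatively, the Knothe rearrangement yields a triangular diffeomorphism adequate for the computation. In full generality the conclusion should therefore be read as an infimum over smooth diffeomorphisms, with attainment in the sense above whenever the regularity assumptions are met or in the limit along any approximating sequence of smooth transports.
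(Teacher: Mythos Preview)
Your proposal is correct and follows essentially the same approach as the paper: apply the Euclidean logarithmic Sobolev inequality \eqref{eq:lsi} to $\mu_T$, invoke Lemma \ref{lem:mT} to rewrite both sides in terms of $\mu$, and observe that equality occurs precisely when $\mu_T=\gamma_n$, whose existence is guaranteed by classical transport results (the paper cites the Knothe--Rosenblatt rearrangement). Your additional remarks on regularity via Brenier--Caffarelli go slightly beyond what the paper states but are in the same spirit.
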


\begin{proof}
Applying the logarithmic Sobolev inequality \eqref{eq:lsi} to $\mu_T$ and using Lemma \ref{lem:mT}, we get
\begin{equation}
\begin{split}
 \HH(\mu\|\lambda_n) + \int_{\R^n}& \log|\det \DD T(T^{-1}(x))| \,\diff\mu(x) - \HH(\gamma_n\|\lambda_n) 
 \\ & \leq \frac{1}{2} \int_{\R^n} \left| \frac{\msf{D}T(T^{-1}(x)) \cdot \nabla f(x)}{f(x)} +\nabla \log|\det \msf{D}T(T^{-1}(x)) |\right|^2 \,\diff\mu(x),
 \end{split}
\end{equation}
with equality only if $\mu_T=\gamma_n$. The existence of a map $T$ transporting $\mu$ to $\gamma_n$ is a classical fact in optimal transport going back to at least \cite{Ros52,Kno57} (see also \cite{Vil09}).
\end{proof}

We are not aware of a proof of \eqref{eq:entropy-ident} which does not rely on the logarithmic Sobolev inequality \eqref{eq:lsi}. It remains very interesting to understand whether \eqref{eq:entropy-ident} can lead to stability estimates for \eqref{eq:lsi}, or even \eqref{eq:lsi**}, in the spirit of \cite[Theorem~3]{ELS20}.

Formula \eqref{eq:entropy-ident} becomes more tractable when specified to specific kinds of diffeomorphisms. For instance, when $T$ is a product map of the form $T(x)=(\tau_1(x_1),\ldots,\tau_n(x_n))$, we get
\begin{equation} \label{eq:mT-diag}
\HH(\mu\|\lambda_n)-\HH(\gamma_n\|\lambda_n) \leq \inf_{\tau_1,\ldots,\tau_n\in\mr{Diff}(\R)}  \sum_{i=1}^n\int_{\R^n} \left\{\frac{1}{2}\left| \frac{(\tau_i'\circ\tau_i^{-1}) \partial_i f}{f} +( \log |\tau_i'\circ\tau_i^{-1}|)' \right|^2 - \log|\tau_i'\circ\tau_i^{-1}| \right\} \,\diff\mu.
\end{equation}
A similar simplified formula can be derived if $T$ is a rotationally invariant map of the form $T(x) = \sigma(|x|) x$. The equality cases of Theorem \ref{thm:mT} show that if $\mu$ is a product measure or a rotationally invariant measure, then the inequalities obtained by optimizing over the corresponding class of nonlinear transformations become equalities. For the case of a general probability measure $\mu$, we pose the following question.

\begin{question}
Let $\mu$ be an arbitrary absolutely continuous probability measure on $\R^n$. For which collection of diffeomorphisms $\tau_1,\ldots,\tau_n\in\mr{Diff}(\R)$ is the infimum \eqref{eq:mT-diag} attained?
\end{question}

A similar question can be asked for the optimal rotationally invariant change of variables.

We have not investigated whether nonlinear changes of variables may give rise to variational formulas \`a la \eqref{eq:entropy-ident} when applied to other estimates like the Gagliardo--Nirenberg--Sobolev inequality \eqref{eq:gns-gen} or Beckner's inequality \eqref{eq:beckner}.




\section{Tensorization of nonlinear logarithmic Sobolev inequalities in product spaces}
\label{sec:Hamming}
Let $I$ be a countable set,  $\{(\mb X_i, \mu_i)\}_{i\in I}$ a family of probability spaces where $\mb X_i$ is countable and denote their product space by $(\mb X,\mu) = (\prod_{i\in I} \mb X_i, \otimes_{i\in I}\mu_i)$.  For a point $x=(x_i)_{i\in I} \in \mb{X}$ and $i\in I$, we shall denote by $x_{\sim  i}$ the point $(x_j)_{j\neq i} \in \prod_{j\neq i} \mb{X}_j$ and by $\mu_{\sim i} \eqdef \otimes_{j\neq i} \mu_j$.  Moreover, for a point $z\in\prod_{j\neq i} \mb{X}_j$ and a function $f:\mb{X}\to\R$, we shall\mbox{ denote by $f_z:\mb{X}_i\to\R$ the restriction of $f$ given by}
\begin{equation}
\forall \ y\in\mb{X}_i,\qquad f_z(y) \eqdef f(z,y).
\end{equation} 
For each $i\in I$,  let $B_i$ be a functional acting on measurable functions $g:\mb \prod_{j\in J} \mb{X}_j\to \R$ for any $J\subseteq I$.  We shall say that the family of functionals $\{B_i\}_{i\in I}$ \emph{disintegrates} if it satisfies the identities
\begin{equation}
\forall \ i\in I, \qquad \int B_i\big(f_{x_{\sim i}}\big)\,\diff\mu_{\sim i}(x_{\sim i}) = B_i(f).
\end{equation}

Our main tensorization principle for nonlinear entropy inequalities is the following. 

\begin{theorem} \label{thm:tensor}
Fix a countable set $I$ and two collections of functionals $\{Q_i\}_{i\in I}$, $\{M_i\}_{i\in I}$ which disintegrate in the above sense.  Let $\Phi:\R\to \R$ be a concave function and suppose that, for any $i\in I$, every function $f_i:\mathbb X_i\to \R_{+}$ satisfies the inequality
\begin{equation} \label{eq:tensor-assumption}
\mr{Ent}_{\mu_i}[f_i]\le Q_i(f_i)+\mb{E}_{\mu_i}[f_i]\Phi\left(\frac{M_i(f_i)-Q_i(f_i)}{\mb{E}_{\mu_i}[f_i]}\right).
\end{equation}
Then, every function $f:\mathbb X\to \R_{+}$ satisfies
\begin{equation}
\mr{Ent}_\mu[f]\le \sum_{i\in I}Q_i(f)+\mb{E}_\mu[f]\sum_{i\in I}\Phi\left(\frac{M_i(f)-Q_i(f)}{\mb{E}_\mu[f]}\right). 
\end{equation}
\end{theorem}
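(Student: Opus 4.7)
The plan is to combine the classical sub-additivity of entropy with a Jensen-type argument for the perspective of $\Phi$. The latter step is what replaces the simple coordinate-wise summation that suffices in the usual (Gross-type) tensorization of LSI: because the right-hand side of \eqref{eq:tensor-assumption} couples $\mb{E}_{\mu_i}[f_i]$ and $M_i(f_i)-Q_i(f_i)$ nonlinearly through $\Phi$, a naive summation no longer closes up after integrating out the remaining coordinates, and we must instead exploit concavity.

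Concretely, I would proceed in three steps. First, apply Han's sub-additivity of entropy for $\mu=\otimes_{i\in I}\mu_i$,
\begin{equation}
\mr{Ent}_\mu[f] \le \sum_{i\in I}\int \mr{Ent}_{\mu_i}\big[f_{x_{\sim i}}\big]\,\diff\mu_{\sim i}(x_{\sim i}).
\end{equation}
Second, for each $i$ and $x_{\sim i}$, apply the hypothesis \eqref{eq:tensor-assumption} to the slice $f_{x_{\sim i}}:\mb{X}_i\to\R_+$ and integrate the resulting inequality against $\mu_{\sim i}$. The term $\int Q_i(f_{x_{\sim i}})\,\diff\mu_{\sim i}$ collapses to $Q_i(f)$ by the disintegration hypothesis. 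Third, for the nonlinear term I invoke the standard convex-analytic fact that the perspective $(t,a)\mapsto t\,\Phi(a/t)$ of a concave $\Phi$ is jointly concave on $\R_{>0}\times\R$; Jensen's inequality then gives
\begin{equation}
\int \mb{E}_{\mu_i}\big[f_{x_{\sim i}}\big]\,\Phi\!\left(\frac{M_i(f_{x_{\sim i}})-Q_i(f_{x_{\sim i}})}{\mb{E}_{\mu_i}[f_{x_{\sim i}}]}\right)\diff\mu_{\sim i}(x_{\sim i}) \le \mb{E}_\mu[f]\,\Phi\!\left(\frac{M_i(f)-Q_i(f)}{\mb{E}_\mu[f]}\right),
\end{equation}
where Fubini identifies the integral of $\mb{E}_{\mu_i}[f_{x_{\sim i}}]$ as $\mb{E}_\mu[f]$, and disintegration of $M_i$ and $Q_i$ identifies the numerator as $M_i(f)-Q_i(f)$. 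Summing over $i\in I$ then yields the desired conclusion.

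The only step that is not entirely mechanical is the joint concavity of the perspective. This reduces to checking that, for $\lambda\in[0,1]$ and $t_1,t_2>0$, the ratio $(\lambda a_1+(1-\lambda)a_2)/(\lambda t_1+(1-\lambda)t_2)$ is the convex combination of $a_1/t_1$ and $a_2/t_2$ with weights proportional to $\lambda t_1$ and $(1-\lambda)t_2$; concavity of $\Phi$ then delivers the inequality in one line. A secondary but trivial concern is the case $\mb{E}_{\mu_i}[f_{x_{\sim i}}]=0$, which is handled by the convention $0\cdot\Phi(0/0)=0$ since then $f_{x_{\sim i}}\equiv 0$ $\mu_i$-a.s.\ and the corresponding slice contributes nothing to any of the integrals.
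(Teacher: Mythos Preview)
Your proof is correct and follows essentially the same route as the paper: sub-additivity of entropy, the coordinate-wise hypothesis on slices, disintegration of $Q_i$ and $M_i$, and Jensen's inequality for the concave $\Phi$. The only cosmetic difference is that the paper phrases the Jensen step via the tilted probability measure $\frac{\mb{E}_{\mu_i}[f_{x_{\sim i}}]}{\mb{E}_\mu[f]}\,\diff\mu_{\sim i}$ rather than via joint concavity of the perspective, but as you yourself note these are the same computation.
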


\begin{proof}
Combining the subadditivity of entropy and the assumptions of the theorem (including the disintegration of $\{Q_i\}_{i\in I}$) we get that, for every $f:\mathbb X\to \R_{+}$,
\begin{equation}
\begin{split}
\mr{Ent}_\mu[f] &\le \sum_{i\in I}\int \mr{Ent}_{\mu_i}[f_{x_{\sim i}}]\,\diff\mu_{\sim i}(x_{\sim i})\\
&\stackrel{\eqref{eq:tensor-assumption}}{\leq}\sum_{i\in I} \int \left [Q_i(f_{x_{\sim i}})+\mb{E}_{\mu_i}[f_{x_{\sim i}}]\Phi\left(\frac{M_i(f_{x_{\sim i}})-Q_i(f_{x_{\sim i}})}{\mb{E}_{\mu_i}[f_{x_{\sim i}}]}\right)\right] \,\diff\mu_{\sim i}(x_{\sim i})\\
&= \sum_{i\in I}Q_i(f)+ \mb{E}_{\mu}[f] \sum_{i\in I} \int \Phi\left(\frac{M_i(f_{x_{\sim i}})-Q_i(f_{x_{\sim i}})}{\mb{E}_{\mu_i}[f_{x_{\sim i}}]}\right)\frac{\mb{E}_{\mu_i}[f_{x_{\sim i}}]}{\mb{E}_{\mu}[f]} \,\diff\mu_{\sim i}(x_{\sim i}).
\end{split}
\end{equation}
Since $\int \mb{E}_{\mu_i}[f_{x_{\sim i}}]\,\diff\mu_{\sim i}(x_{\sim i})= \mb{E}_{\mu}[f]$, and $\mb{E}_{\mu_i}[f_{x_{\sim i}}]\ge 0$, it follows that $\frac{\mb{E}_{\mu_i}[f_{x_{\sim i}}]}{\mb{E}_{\mu}[f]}\,\diff\mu_{\sim i}(x_{\sim i})$ defines a probability measure on $\prod_{j\neq i}\mathbb X_j$. Hence, by Jensen's inequality and disintegration, we get
\begin{equation}
\begin{split}
\mr{Ent}_\mu[f]&\le \sum_{i\in I}Q_i(f)+ \mb{E}_{\mu}[f]\sum_{i\in I} \Phi\left(\int \frac{M_i(f_{x_{\sim i}})-Q_i(f_{x_{\sim i}})}{\mb{E}_{\mu_i}[f_{x_{\sim i}}]}\cdot\frac{\mb{E}_{\mu_i}[f_{x_{\sim i}}]}{\mb{E}_{\mu}[f]} \,\diff\mu_{\sim i}(x_{\sim i})\right)\\
&= \sum_{i\in I}Q_i(f)+ \mb{E}_{\mu}[f] \sum_{i\in I} \Phi\left(\frac{M_i(f)-Q_i(f)}{\mb{E}_{\mu}[f]}\right).
\end{split}
\end{equation}
This completes the proof of the theorem.
\end{proof}

\begin{remark}
While Theorem \ref{thm:tensor} is stated in a general form which contains the disintegrating additive errors $\{Q_i\}_{i\in I}$, in its main application (Theorem \ref{thm:tensor-intro}) which refines the result of \cite{PS19}, these are assumed to be vanishing.   We chose to include the deficits in the general formulation above as such terms often appear in modified logarithmic Sobolev-type inequalities, especially in discrete settings (see, for instance, \cite{BL98, Wu00, BT06, Joh17}).
\end{remark}

\begin{proof} [Proof of Theorem \ref{thm:tensor-intro}]
The conclusion \eqref{eq:tensor-intro-conclusion} directly follows from Theorem \ref{thm:tensor} with $Q_i(f)=0$ and $M_i(f) = \mb{E}_{\pi^n} [\ms{E}_i(f^{1/p},f^{1-1/p})]$ since these functionals disintegrate.
\end{proof}

\begin{remark}
A different refinement of the log-Sobolev inequality on the discrete cube in terms of the logarithmic Laplace transform of the underlying measure can be found in \cite[Equation (12)]{Aug21}
\end{remark}


\part{Space forms: stochastic methods}
\label{part:space_forms}


\section{Preliminaries}

In this section we will introduce the necessary prerequisites from stochastic calculus on manifolds required to prove Theorem \ref{thm:manifolds}. We will be following the standard notation of \cite{Hsu02,Pet16}.


\subsection{The frame bundle}

Let $(\MM,\GG)$ be a complete $n$-dimensional Riemannian manifold. The \emph{orthonormal frame bundle} $\ms{O}(\MM)$ of $\MM$ is the set of all pairs of the form $(x, {\bf u})$, where $x\in\MM$ and ${\bf u} : \R^n \to T_x\MM$ is a Euclidean isometry. We shall denote by $\pi:\ms{O}(\MM)\to\MM$ the natural projection given by $\pi(x,{\bf u}) = x$. Any scalar-valued function $f:\MM\to\R$ admits a natural \emph{lift} ${\li f}:\ms{O}(\MM)\to\R$ given by
\begin{equation}
\forall \ (x,{\bf u})\in\ms{O}(\MM),\qquad {\li f}(x,{\bf u}) = f(x).
\end{equation}
Abusing notation, we shall often identify the pair $(x,{\bf u}) \in\ms{O}(\MM)$ with the isomorphism ${\bf u}$. 

A curve $\{ {\bf u}_t\}_{t\in[0,1]}$ in $\ms{O}(\MM)$ is called \emph{horizontal} if for every $a\in\R^n$, the vector field $\{ {\bf u}_ta \}_{t\in[0,1]}$ is parallel along the curve $\{ \pi {\bf u}_t\}_{t\in[0,1]}$ in $\MM$. A tangent vector ${\bf X}\in T_{{\bf u}}\ms{O}(\MM)$ is called horizontal if it is the tangent vector of a horizontal curve passing from ${\bf u}$. For any vector $X\in T_{\pi{\bf u}}\MM$ there exists a unique horizontal vector ${\bf X}\in T_{{\bf u}}\ms{O}(\MM)$ such that $\pi_\ast {\bf X} = X$; we say that ${\bf X}$ is the horizontal lift of $X$ at ${\bf u}$. Let $\{e_1,\ldots,e_n\}$ be the standard basis of $\R^n$. The $i$-th fundamental \emph{horizontal vector field} $\HH_i$ evaluated at a point ${\bf u}\in\ms{O}(\MM)$ is the horizontal lift of the vector ${\bf u}e_i \in T_{\pi{\bf u}}\MM$. Thus, for any $i\in\{1,\ldots,n\}$, the lift ${\li f}$ of a function $f:\MM\to\R$ satisfies
\begin{equation} \label{eq:laplacian-ident}
\forall \ {\bf u}\in\ms{O}(\MM),\qquad \HH_i {\li f}({\bf u}) = \nabla_{{\bf u}e_i} f(\pi {\bf u}).
\end{equation}
A vector field on $\ms{O}(\MM)$ is called \emph{horizontal} if it lies in the span of $\{\HH_1,\ldots,\HH_n\}$. We denote by $\langle\cdot,\cdot\rangle_{\mr{hor}}$ the natural inner product on the space of horizontal vector fields on $\ms{O}(\MM)$ given by
\begin{equation}
\left\langle \sum_{i=1}^n Z_i\HH_i, \sum_{i=1}^n W_i \HH_i \right\rangle_{\mr{hor}} = \sum_{i=1}^n Z_i W_i.
\end{equation} 
Moreover, we shall denote by $\nabla_{\mr{hor}}{\li f} = (\HH_1{\li f},\ldots\HH_n {\li f})\in\R^n$ the \emph{horizontal gradient} of a given function ${\li f}:\ms{O}(\MM)\to\R$. The frame bundle $\ms{O}(\MM)$ is equipped with Bochner’s horizontal Laplacian
\begin{equation}
\label{eq:Laplace_def}
\Delta_{\ms{O}(\MM)}\eqdef\sum_{i=1}^nH_i^2,
\end{equation}
and can be verified (see \cite[Proposition~3.1.2]{Hsu02}) that the lift $\li{f}$ of any function $f:\MM\to\R$ satisfies
\begin{equation} \label{eq:laplacian-ident}
\forall \ {\bf u}\in\ms{O}(\MM),\qquad \Delta_{\ms{O}(\MM)} {\li f}({\bf u}) = \Delta f(\pi {\bf u}),
\end{equation}
where $\Delta$ is the Laplace--Beltrami operator of $(\MM,\GG)$.

We record for future reference the following very useful expression for the action of the commutator of $\Delta_{\ms{O}(\MM)}$ with $\HH_i$ on lifted functions.

\begin{lemma} \label{lem:ricci}
If $f:(\MM,\GG)\to\R$ is a smooth function, then for any $i\in\{1,\ldots,n\}$, its lift ${\li f}$ satisfies
\begin{equation} \label{eq:commute}
\forall \ {\bf u}\in\ms{O}(\MM),\qquad \Delta_{\ms{O}(\MM)} \HH_i {\li f}({\bf u}) - \HH_i \Delta_{\ms{O}(\MM)} {\li f}({\bf u}) = \mathrm{Ric}(\nabla f, {\bf u}e_i)(\pi {\bf u}),
\end{equation}
where $\mr{Ric}(\cdot,\cdot)$ is the Ricci tensor on $\MM$.
\end{lemma}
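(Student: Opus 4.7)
The plan is to reduce both sides of \eqref{eq:commute} to tensorial expressions on the base manifold $\MM$ and then conclude by the Bochner--Weitzenböck identity. As a first step, I would prove by induction on $k\ge 1$ that for any indices $i_1,\ldots,i_k\in\{1,\ldots,n\}$ and any smooth $f:\MM\to\R$,
\begin{equation*}
\HH_{i_k}\HH_{i_{k-1}}\cdots \HH_{i_1}\li f(\mathbf u)\;=\;(\nabla^k f)(\mathbf u e_{i_k},\mathbf u e_{i_{k-1}},\ldots,\mathbf u e_{i_1})(\pi\mathbf u),
\end{equation*}
where $\nabla^k f$ denotes the $k$-th iterated covariant derivative. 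The base case $k=1$ is the identity $\HH_i\li f(\mathbf u)=\nabla_{\mathbf u e_i}f(\pi\mathbf u)$ recorded in the excerpt. For the induction step, if $\{\mathbf u_t\}$ is the horizontal curve through $\mathbf u$ with $\dot{\mathbf u}_0=\HH_{i_k}(\mathbf u)$, then by the very definition of the horizontal lift each frame vector $\mathbf u_t e_\ell$ is parallel along the base curve $\pi\mathbf u_t$; differentiating the tensorial scalar $t\mapsto(\nabla^{k-1}f)(\mathbf u_t e_{i_{k-1}},\ldots,\mathbf u_t e_{i_1})(\pi\mathbf u_t)$ at $t=0$ then yields $(\nabla_{\mathbf u e_{i_k}}\nabla^{k-1}f)(\mathbf u e_{i_{k-1}},\ldots,\mathbf u e_{i_1})(\pi\mathbf u)$ by the defining property of the covariant derivative acting on parallel sections.

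Applying this with $k=3$, writing $\Delta_{\ms{O}(\MM)}=\sum_j\HH_j^2$ as in \eqref{eq:Laplace_def}, and combining with the already recorded identity $\Delta_{\ms{O}(\MM)}\li f(\mathbf u)=\Delta f(\pi\mathbf u)$, the commutator on the left-hand side of \eqref{eq:commute} reduces, at the point $x=\pi\mathbf u$, to
\begin{equation*}
\sum_{j=1}^n(\nabla^3 f)(\mathbf u e_j,\mathbf u e_j,\mathbf u e_i)\;-\;\nabla_{\mathbf u e_i}(\Delta f).
\end{equation*}
Since $\{\mathbf u e_j\}_{j=1}^n$ is an orthonormal basis of $T_x\MM$, this expression is the left-hand side of the classical Bochner--Weitzenböck identity
\begin{equation*}
\sum_{j=1}^n(\nabla^3 f)(\varepsilon_j,\varepsilon_j,X)\;-\;\nabla_X(\Delta f)\;=\;\mr{Ric}(\nabla f,X),
\end{equation*}
valid for any vector $X\in T_x\MM$ and any orthonormal basis $\{\varepsilon_j\}$ of $T_x\MM$. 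Specializing to $X=\mathbf u e_i$ yields exactly the right-hand side of \eqref{eq:commute}.

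The only deep ingredient in this argument is the Bochner--Weitzenböck identity invoked at the end; it is a standard consequence of the Ricci commutation identity $\nabla_X\nabla_Y\omega-\nabla_Y\nabla_X\omega-\nabla_{[X,Y]}\omega=-R(X,Y)\omega$ applied to the $1$-form $\omega=df$, combined with contraction and the definition of the Ricci tensor as a trace of Riemann. The induction in the first step is routine bookkeeping once one carefully tracks that the horizontal flow parallel-transports the tautological frame vectors $\mathbf u e_\ell$ along the base curve, so I do not foresee any genuine obstacle beyond invoking this classical identity and keeping the ordering of covariant-derivative arguments straight.
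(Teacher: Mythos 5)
Your proof is correct, but it takes a genuinely different route from the one in the paper. The paper works entirely \emph{upstairs} in the orthonormal frame bundle: it invokes the structure equations of \cite[Lemma~5.5.1]{Hsu02} to expand the commutator $[\HH_k,\HH_i]$ in terms of the vertical fields $V_{ab}$ and the curvature form $\Omega$, uses that $V_{ab}$ annihilates lifted functions, reads off the Riemann-tensor coefficients from the definition of $\Omega$, and contracts to get Ricci. Your argument instead works \emph{downstairs}: you first establish, by an induction that exploits the fact that the horizontal flow parallel-transports the tautological frame vectors $\mathbf u_t e_\ell$, the general identity $\HH_{i_k}\cdots\HH_{i_1}\li f(\mathbf u) = (\nabla^k f)(\mathbf u e_{i_k},\ldots,\mathbf u e_{i_1})(\pi\mathbf u)$, and then reduce the commutator to the classical contracted second Bianchi/Ricci commutation identity $\sum_j(\nabla^3 f)(\varepsilon_j,\varepsilon_j,X) - \nabla_X\Delta f = \mathrm{Ric}(\nabla f,X)$. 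I verified that this Bochner--Weitzenböck identity, with the sign conventions consistent with the rest of the paper, does indeed yield exactly the right-hand side of \eqref{eq:commute}, and that the induction step is sound (the only point requiring care is that the iterated covariant derivative $\nabla^k f$ is applied with the outermost derivative in the first slot, matching the order in which the $\HH_{i_j}$ act; you have this right). Your approach has the advantage of isolating a clean, reusable lemma about iterated horizontal derivatives and offloading the curvature computation to a standard identity on the base manifold; the paper's approach stays closer to the frame-bundle formalism of \cite{Hsu02} and avoids having to state the iterated-derivative lemma, at the cost of unpacking the structure equations. Both are legitimate and about equally long once written out.
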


\begin{proof}
We shall follow the notation of \cite[Section~5.5]{Hsu02}. For $i,k\in\{1,\ldots,n\}$, it follows from \cite[Lemma~5.5.1]{Hsu02} that the commutator $[\HH_i,\HH_k]$ is a vertical vector field and thus $[\HH_i,\HH_k]{\li f}=0$, i.e.
\begin{equation}
\HH_k\HH_i {\li f} = \HH_i \HH_k{\li f}.
\end{equation}
Therefore, we have
\begin{equation} \label{eq:com1}
\HH_k^2 \HH_i {\li f} = \HH_k \HH_i \HH_k {\li f} = [\HH_k,\HH_i] \HH_k {\li f} + \HH_i \HH_k^2 {\li f}.
\end{equation}
Substituting the expression of \cite[Lemma~5.5.1]{Hsu02} for $[\HH_k,\HH_i]$, we get
\begin{equation} \label{eq:com2}
[\HH_k,\HH_i] \HH_k {\li f} = -\sum_{a,b} \Omega_{ki}^{ab} V_{ab} \HH_k {\li f} = - \sum_{a,b} \Omega_{ki}^{ab} [V_{ab}, \HH_k] {\li f},
\end{equation} 
where in the last identity we used that $V_{ab}{\li f}=0$. Again, by \cite[Lemma~5.5.1]{Hsu02}, if we denote by $A_{ab}^{k\ell}$ the number $\tfrac{1}{2}$ for $(a,b)=(k,\ell)$ and $-\tfrac{1}{2}$ for $(a,b)=(\ell,k)$, and zero otherwise, we obtain
\begin{equation} \label{eq:com3}
-\sum_{a,b} \Omega_{ki}^{ab} [V_{ab}, \HH_k] {\li f} = -\sum_{a,b,\ell} \Omega_{ki}^{ab} A_{ab}^{k\ell} \HH_\ell {\li f} = \frac{1}{2} \sum_{\ell} \big\{ \Omega_{ki}^{\ell k}-\Omega_{ki}^{k\ell}\big\} \HH_\ell{\li f} = \sum_\ell \Omega_{ki}^{\ell k} \HH_\ell {\li f},
\end{equation}
where the antisymmetry of $\Omega$ on the top indices follows from its definition in \cite[p.~153]{Hsu02} as it is an $\mathfrak{o}(d)$-valued tensor. Combining \eqref{eq:com1}, \eqref{eq:com2}, \eqref{eq:com3} and summing over $k$, we deduce that
\begin{equation}
\Delta_{\ms{O}(\MM)} \HH_i {\li f}({\bf u}) - \HH_i \Delta_{\ms{O}(\MM)} {\li f}({\bf u}) = \sum_{k,\ell} \Omega_{ki}^{\ell k} \HH_\ell {\li f}({\bf u}) = \sum_{k,\ell} \Omega_{ki}^{\ell k} \nabla_{{\bf u}e_\ell} f(\pi {\bf u}).
\end{equation}
Now, observe that by the definition of $\Omega$ in terms of the Riemann tensor $\msf{R}$ of $\MM$ in \cite[p.~149]{Hsu02},
\begin{equation}
\Omega_{ki}^{\ell k} = \GG\big( \msf{R}({\bf u}e_k, {\bf u}e_i) {\bf u}e_k, {\bf u}e_\ell\big)
\end{equation}
and the conclusion follows from the definition of Ricci curvature.
\end{proof}


\subsection{Brownian motion on manifolds} Let $W_t=(W_t^1,\ldots,W_t^n)$ be a standard Brownian motion on $\R^n$ and $(\MM, \GG)$ be a complete $n$-dimensional Riemannian manifold. We consider the following stochastic differential equation on the frame bundle $\ms{O}(\MM)$,
\begin{equation} \label{eq:bm}
\diff\Phi_t = \sum_{i=1}^n \HH_i (\Phi_t) \circ \diff W_t^i,
\end{equation}
where the shorthand notation $\circ$ refers to the Stratonovitch integral. In It\^o terms, the above SDE asserts that for every smooth $g:\ms{O}(\MM)\to\R$, we have
\begin{equation} \label{eq:sde-phi}
\diff g(\Phi_t) = \sum_{i=1}^n \HH_ig(\Phi_t)\, \diff W_t^i  + \frac{1}{2} \Delta_{\ms{O}(\MM)}g(\Phi_t)\, \diff t.
\end{equation}
For any initial condition $\Phi_0 = {\bf u}\in\ms{O}(\MM)$, this equation has a strong solution which does not blow up in finite time if the Ricci curvature of $\MM$ is bounded from below by any constant $\kappa\in\R$ (see \cite[Theorem~4.2.4]{Hsu02} and \cite{Var83} for a sufficient and almost necessary condition for stochastic completeness). We denote by $B_t = \pi\Phi_t$, where $t\geq0$, the {\it Brownian motion} on $\MM$ whose starting point is $x=\pi{\bf u}\in \MM$. Applying \eqref{eq:sde-phi}, we deduce that for any smooth function $f:\MM\to\R$, the Brownian motion $\{B_t\}_{t\geq0}$ satisfies the SDE
\begin{equation}
\diff f(B_t) = \sum_{i=1}^n \Phi_t^i f(B_t)\, \diff W_t^i + \frac{1}{2} \Delta f(B_t)\,\diff t.
\end{equation}


\subsection{The F\"ollmer process and Lehec's formula} In this section we introduce an analogue of the classical F\"ollmer process \cite{Sch32,Fol86,Leo14} on Riemannian manifolds (see also \cite[Section~5.4]{Hsu02}). We then present a result of Lehec \cite{Leh17} who used this process to give a stochastic proof of the dimensional logarithmic Sobolev inequality for manifolds with Ricci curvature bounded below (see \cite{BL06,BBL17} for more general statements proven via semigroup arguments). 

Let $W_t=(W_t^1,\ldots,W_t^n)$ be a standard Brownian motion on $\R^n$ and $(\MM, \GG)$ be a complete $n$-dimensional Riemannian manifold whose Ricci curvature is bounded from below. We shall denote by $\diff x$ the volume measure on $\MM$ and by $\{P_t\}_{t\geq0}$ the heat semigroup on $\MM$. Recall that for a smooth function $h:\MM\to\R$, the action of the heat flow $\{P_t\}_{t\geq0}$ on $g$ is characterized by the ordinary differential equation
\begin{equation}
\forall \ t>0, \ x\in\MM,\qquad \frac{\partial P_th}{\partial t}(x) = \frac{1}{2}\Delta P_t h(x)
\end{equation}
with initial condition $P_0h = h$ on $\MM$. We recall that the heat semigroup and the Laplacian commute: $ \Delta P_th=P_t\Delta h$, and we write $P_t\nabla^2 f(x)$ for the 2-tensor on $T_x\MM$ identified with the symmetric matrix $( P_t\nabla^2 f(\Phi_0e_i, \Phi_0e_j)(x))_{i,j=1}^n$. Note that $P_t$ and\mbox{ $\nabla^2$ \emph{do not} commute (cf. Theorem \ref{thm:wang}).}

For a positive function $f:\MM\to\R_+$ and $T>0$, we consider the following system of stochastic differential equations with respect to $(\Psi_t, X_t) \in \ms{O}(\MM)\times\MM$
\begin{equation} \label{eq:def-follmer}
\begin{cases}
\diff\Psi_t = \sum_{i=1}^n \HH_i (\Psi_t) \circ \big( \diff W_t^i + \Psi_t^{-1} \nabla\log P_{T-t}f(X_t) \, \diff t\big)\\
X_t = \pi\Psi_t
\end{cases},
\end{equation}
where the notation $\circ$ again refers to the Stratonovitch integral. It is known (see \cite[Theorem~7]{Leh17}) that if $f$ is a smooth-enough positive function, then for any initial condition $\Psi_0 = {\bf u}\in\ms{O}(\MM)$, the system \eqref{eq:def-follmer} has a strong solution on $[0,T]$. In \cite[Theorem~7]{Leh17}, Lehec proved the  manifold version of an important representation formula for relative entropy in terms of the {\it F\"ollmer process} $X_t$, first proven in their earlier work \cite{Leh13}.

\begin{theorem} [Lehec]
\label{thm:Lehec}
Let $(\MM, \GG)$ be a complete $n$-dimensional Riemannian manifold whose Ricci curvature is bounded from below and fix a smooth enough positive density function $f:\MM\to\R_+$ and $T>0$. If $\{X_t\}_{t\in[0,T]}$ is a solution of \eqref{eq:def-follmer} with initial condition $\Psi_0 = {\bf u}$ and $\pi{\bf u}=x$, then the relative entropy of the measure $\mu$ with density $\frac{\diff\mu}{\diff P_T\delta_x} = f$ is
\begin{equation} \label{eq:lehec-formula}
\HH(\mu \| P_T\delta_x) = \frac{1}{2}\mb{E} \left[ \int_0^T \big| \nabla \log P_{T-t}f(X_t)\big|^2\,\diff t \right].
\end{equation}
where $|v|\eqdef \GG_x(v,v)$ for $x\in \MM$ and $v\in T_x\MM$.
\end{theorem}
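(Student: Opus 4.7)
The plan is to realize the F\"ollmer process as Brownian motion on $\MM$ tilted by $f$ at its terminal point, and then extract the entropy from the resulting Radon--Nikodym derivative via Girsanov's theorem. Let $\mb{W}$ denote the law, on path space, of the Brownian motion $B_t = \pi\Phi_t$ on $\MM$ started at $x$, where $\Phi_t$ solves \eqref{eq:bm} with initial frame ${\bf u}$; in particular $B_T\sim P_T\delta_x$ under $\mb{W}$. The normalization $\int f\,\diff P_T\delta_x = 1$ is equivalent to $P_T f(x) = 1$, so the prescription $\frac{\diff\mb{P}_\mu}{\diff\mb{W}} \eqdef f(B_T)$ defines a probability measure on path space under which $B_T$ has law $\mu$, and
\begin{equation}
\HH(\mu\|P_T\delta_x) = \int_\MM \log f\,\diff\mu = \mb{E}_{\mb{P}_\mu}[\log f(B_T)].
\end{equation}

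The next step is to rewrite $f(B_T)$ as a Dol\'eans--Dade exponential along $B_t$. Set $N_t \eqdef P_{T-t}f(B_t)$, so $N_0 = 1$ and $N_T = f(B_T)$. By It\^o's formula on $\MM$ together with the backward heat equation $\partial_t P_{T-t}f = -\tfrac12 \Delta P_{T-t}f$, the drift cancels and
\begin{equation}
\diff N_t = \nabla P_{T-t}f(B_t)\cdot \diff B_t^{\mathrm{m}} = N_t\,u_t\cdot \diff B_t^{\mathrm{m}},\qquad u_t \eqdef \nabla \log P_{T-t}f(B_t),
\end{equation}
where $\diff B_t^{\mathrm{m}} = \sum_{i=1}^n \Phi_t e_i\,\diff W_t^i$ is the martingale differential of $B_t$ under $\mb{W}$. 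Hence $N_t$ is a positive local $\mb{W}$-martingale (a true martingale under the regularity hypothesis on $f$ and the Ricci lower bound on $(\MM,\GG)$), which admits the Dol\'eans--Dade form
\begin{equation}
\log N_T = \int_0^T (\Phi_t^{-1} u_t)\cdot \diff W_t - \frac{1}{2}\int_0^T |u_t|^2\,\diff t.
\end{equation}
Girsanov's theorem then yields that $\tilde W_t \eqdef W_t - \int_0^t \Phi_s^{-1}u_s\,\diff s$ is an $\R^n$-Brownian motion under $\mb{P}_\mu$, and consequently $\Phi_t$ satisfies the F\"ollmer SDE \eqref{eq:def-follmer} driven by $\tilde W$. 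In particular, $(B_t,\Phi_t)$ under $\mb{P}_\mu$ has the same law as $(X_t,\Psi_t)$ under the original measure.

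It remains to compute $\mb{E}_{\mb{P}_\mu}[\log N_T]$. Substituting $\diff W_t = \diff \tilde W_t + \Phi_t^{-1}u_t\,\diff t$ into the formula above and using the isometry $|\Phi_t^{-1}u_t| = |u_t|$,
\begin{equation}
\log N_T = \int_0^T (\Phi_t^{-1}u_t)\cdot \diff \tilde W_t + \frac{1}{2}\int_0^T |u_t|^2\,\diff t,
\end{equation}
where the first term is a centered $\mb{P}_\mu$-martingale. Taking expectations delivers
\begin{equation}
\HH(\mu\|P_T\delta_x) = \mb{E}_{\mb{P}_\mu}[\log N_T] = \frac{1}{2}\,\mb{E}\Big[\int_0^T |\nabla\log P_{T-t}f(X_t)|^2\,\diff t\Big],
\end{equation}
which is \eqref{eq:lehec-formula}. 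The main technical obstacle is justifying the two soft facts used above: that $N_t$ is a genuine (not merely local) $\mb{W}$-martingale so $\mb{P}_\mu$ is a bona fide probability, and that the $\mb{P}_\mu$-stochastic integral $\int_0^T (\Phi_t^{-1}u_t)\cdot \diff \tilde W_t$ has vanishing expectation. Both rely on quantitative heat kernel estimates controlling $|\nabla\log P_{T-t}f|$ as $t\to T$, which is precisely where the smoothness assumption on $f$ and the Ricci lower bound on $(\MM,\GG)$ enter.
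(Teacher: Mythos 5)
Your Girsanov/Doob--h-transform argument is correct: conditioning Brownian motion by $f(B_T)$, writing $f(B_T)$ as the Dol\'eans--Dade exponential of $\int_0^t (\Phi_s^{-1}\nabla\log P_{T-s}f(B_s))\cdot\diff W_s$, recognizing the tilted process as the F\"ollmer SDE \eqref{eq:def-follmer}, and then taking expectations under the new measure to kill the stochastic integral is exactly the argument from \cite{Leh13,Leh17}, which is the proof the paper is citing (the paper itself does not reprove Theorem \ref{thm:Lehec} but only quotes it from Lehec). The one point you correctly flag as a technical gap --- true martingality of $N_t$ and vanishing expectation of the $\mb{P}_\mu$-stochastic integral --- is also handled via heat-kernel/regularity estimates in the cited source, so the proposal matches the intended proof.
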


It is worth pointing out that, in view of the decay and regularity of the heat kernel on space forms (see, e.g.,~\cite[Chapter~6]{Cha84} and \cite{McK70,DGM76,GN98}),  it suffices to assume that the functions for which we wish to prove the logarithmic Sobolev inequalities of Theorem \ref{thm:manifolds}  are Lipschitz and bounded away from 0.  Therefore,  the regularity conditions required for the function $f$ in Lehec's theorem will always be tacitly assumed to hold.

We record for future reference the following  computations (see also \cite[Equations (5.5.2) -- (5.5.4)]{Hsu02}) on the SDE satisfied by partial derivatives of the logarithm of the heat kernel.

\begin{lemma} \label{lem:sde-for-Hi}
Let  $(\MM, \GG)$ be a complete $n$-dimensional Riemannian manifold and fix a smooth enough positive density function $f:\MM\to\R_+$ and $T>0$. Denote by ${F}_t:\MM\to\R$ the function given by
\begin{equation}
\forall \ x\in\MM,\qquad { F}_t(x) = \log P_{T-t} f(x)
\end{equation} 
and by ${\li F}_t$ the lift of $F_t$ onto $\ms{O}(\MM)$. If $\{X_t\}_{t\in[0,T]}$ is a solution of \eqref{eq:def-follmer} and $\{W_t\}_{t\geq0}$ is a standard Brownian motion on $\R^n$, then for every $i\in\{1,\ldots,n\}$ we have
\begin{equation}
\diff \HH_i{\li F}_t (\Psi_t) = \big\langle \nabla^{\mr{hor}} \HH_i {\li F}_t(\Psi_t), \,\diff W_t \big\rangle + \frac{1}{2} \mr{Ric}\big(\nabla F_t , \Psi_t e_i\big) (X_t)\,\diff t.
\end{equation}
\end{lemma}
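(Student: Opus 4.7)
The plan is to apply an It\^o formula to the time-dependent function $g(t,{\bf u}) = \HH_i \li{F}_t({\bf u})$ along the F\"ollmer process $\Psi_t$ and to verify that all the drift terms cancel save for the Ricci contribution. First, from \eqref{eq:def-follmer} and the Stratonovitch-to-It\^o conversion (exactly as in \eqref{eq:sde-phi} but with the extra drift from the F\"ollmer correction), one obtains for a smooth $g(t,{\bf u})$ the chain rule
\begin{equation*}
\diff g(t,\Psi_t) = \partial_t g(t,\Psi_t)\,\diff t + \sum_{j=1}^n \HH_j g(t,\Psi_t)\,\diff W_t^j + \frac{1}{2}\Delta_{\ms{O}(\MM)} g(t,\Psi_t)\,\diff t + \sum_{j=1}^n \HH_j g(t,\Psi_t)\,[\Psi_t^{-1}\nabla F_t(X_t)]^j\,\diff t,
\end{equation*}
where the crucial observation is that $[\Psi_t^{-1}\nabla F_t(X_t)]^j = \HH_j \li{F}_t(\Psi_t)$ since $\Psi_t$ is a linear isometry.

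The next step is to identify each term after specializing to $g(t,{\bf u}) = \HH_i \li{F}_t({\bf u})$. For the time derivative, I would use the heat equation $\partial_t P_{T-t}f = -\tfrac{1}{2}\Delta P_{T-t}f$ together with the identity $\Delta\log u = \Delta u/u - |\nabla u|^2/u^2$ to obtain
\begin{equation*}
\partial_t F_t = -\tfrac{1}{2}\bigl(\Delta F_t + |\nabla F_t|^2\bigr), \qquad \text{hence}\qquad \partial_t g = -\tfrac{1}{2}\HH_i\bigl(\Delta_{\ms{O}(\MM)}\li{F}_t + |\nabla^{\mr{hor}}\li{F}_t|_{\mr{hor}}^2\bigr).
\end{equation*}
For the Laplacian term, Lemma \ref{lem:ricci} yields $\Delta_{\ms{O}(\MM)}\HH_i\li{F}_t = \HH_i\Delta_{\ms{O}(\MM)}\li{F}_t + \mr{Ric}(\nabla F_t,\Psi_t e_i)(X_t)$. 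For the F\"ollmer drift correction, using the fact (already recorded in the proof of Lemma \ref{lem:ricci}) that $\HH_j\HH_i\li{F}_t = \HH_i\HH_j\li{F}_t$ on lifted functions, one finds
\begin{equation*}
\sum_{j=1}^n \HH_j\HH_i\li{F}_t(\Psi_t)\,\HH_j\li{F}_t(\Psi_t) = \tfrac{1}{2}\HH_i|\nabla^{\mr{hor}}\li{F}_t|_{\mr{hor}}^2(\Psi_t).
\end{equation*}

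Substituting these three computations into the It\^o expansion, the terms $\pm\tfrac{1}{2}\HH_i\Delta_{\ms{O}(\MM)}\li{F}_t$ cancel and the terms $\pm\tfrac{1}{2}\HH_i|\nabla^{\mr{hor}}\li{F}_t|^2$ cancel, leaving only the Brownian martingale $\langle\nabla^{\mr{hor}}\HH_i\li{F}_t(\Psi_t),\diff W_t\rangle$ and the Ricci drift $\tfrac{1}{2}\mr{Ric}(\nabla F_t,\Psi_t e_i)(X_t)\diff t$, which is precisely the claimed SDE. The main conceptual obstacle is to correctly carry out the Stratonovitch-to-It\^o correction on the frame bundle in the presence of the F\"ollmer drift, together with the careful use of the commutation $\HH_i\HH_j=\HH_j\HH_i$ on lifts to ensure that the expression $\sum_j(\HH_j\HH_i\li{F}_t)(\HH_j\li{F}_t)$ assembles into $\tfrac{1}{2}\HH_i|\nabla^{\mr{hor}}\li{F}_t|^2$; everything else is bookkeeping guided by the structural identity $\partial_t F_t + \tfrac{1}{2}\Delta F_t + \tfrac{1}{2}|\nabla F_t|^2 = 0$.
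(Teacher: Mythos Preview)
Your proposal is correct and follows essentially the same approach as the paper's proof: apply the It\^o formula for the F\"ollmer process to $g(t,{\bf u})=\HH_i\li{F}_t({\bf u})$, use the heat equation identity $\partial_t\li{F}_t=-\tfrac12\Delta_{\ms{O}(\MM)}\li{F}_t-\tfrac12|\nabla^{\mr{hor}}\li{F}_t|^2$, invoke the commutation $\HH_i\HH_j=\HH_j\HH_i$ on lifts to rewrite the F\"ollmer drift as $\tfrac12\HH_i|\nabla^{\mr{hor}}\li{F}_t|^2$, and then observe that everything cancels except the commutator $[\Delta_{\ms{O}(\MM)},\HH_i]\li{F}_t$, which Lemma~\ref{lem:ricci} identifies with the Ricci term. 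The only cosmetic difference is that the paper packages the F\"ollmer drift directly as $\langle\nabla^{\mr{hor}}\HH_i\li{F}_t,\nabla^{\mr{hor}}\li{F}_t\rangle$ rather than first writing $[\Psi_t^{-1}\nabla F_t(X_t)]^j=\HH_j\li{F}_t(\Psi_t)$.
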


\begin{proof}
Using It\^o's formula and \eqref{eq:def-follmer}, we get (omitting the dependence on $\Psi_t$ on the right-hand side of \eqref{eq:use-ito})
\begin{equation} \label{eq:use-ito}
\diff \HH_i{\li F}_t (\Psi_t) = \big\langle \nabla^{\mr{hor}} \HH_i {\li F}_t, \,\diff W_t \big\rangle + \left\{ \frac{\partial \HH_i {\li F}_t}{\partial t} + \frac{1}{2} \Delta_{\ms{O}(\MM)} \HH_i {\li F}_t + \langle \nabla^{\mr{hor}} \HH_i {\li F}_t, \nabla^{\mr{hor}} {\li F}_t\rangle \right\} \,\diff t.
\end{equation}
Observe that the function ${F}_t$ satisfies the equation
\begin{equation}
\frac{\partial {\li F}_t}{\partial t} = -\frac{1}{2}\Delta_{\ms{O}(\MM)} {\li F}_t - \frac{1}{2}\big|\nabla^{\mathrm{hor}} {\li F}_t\big|^2,
\end{equation}
which, after applying $\HH_i$ on both sides, gives 
\begin{equation} \label{eq:use-heat}
\frac{\partial \HH_i{\li F}_t}{\partial t} = \HH_i \frac{\partial {\li F}_t}{\partial t} = -\frac{1}{2} \HH_i\Delta_{\ms{O}(\MM)} {\li F}_t - \frac{1}{2} \HH_i \big| \nabla^{\mr{hor}} {\li F}_t \big|^2.
\end{equation}
Moreover, we have
\begin{equation} \label{eq:use-lc}
 \HH_i \big| \nabla^{\mr{hor}} {\li F}_t \big|^2 =  \HH_i \langle \nabla^{\mr{hor}} {\li F}_t,  \nabla^{\mr{hor}} {\li F}_t \rangle = 2 \langle \HH_i \nabla^{\mr{hor}} {\li F}_t,  \nabla^{\mr{hor}} {\li F}_t \rangle  = 2\langle \nabla^{\mr{hor}}\HH_i {\li F}_t,  \nabla^{\mr{hor}} {\li F}_t \rangle,
\end{equation}
where in the last identity we use that $[\HH_i,\HH_k] {\li h} = 0$ for any lifted function ${\li h}$ on $\ms{O}(\MM)$ \cite[Lemma~5.5.1]{Hsu02}. Substituting \eqref{eq:use-heat} and \eqref{eq:use-lc} in \eqref{eq:use-ito}, we finally obtain
\begin{equation}
\diff \HH_i{\li F}_t (\Psi_t) = \big\langle \nabla^{\mr{hor}} \HH_i {\li F}_t(\Psi_t), \,\diff W_t \big\rangle + \frac{1}{2} [\Delta_{\ms{O}(\MM)}, \HH_i]{\li F}_t(\Psi_t) \,\diff t
\end{equation}
and the desired identity follows immediately from Lemma \ref{lem:ricci}.
\end{proof}


\subsection{The heat flow on space forms} 
The classical Bochner formula  (see, e.g., \cite{Wan19}) implies that if $(\MM,\GG)$ is a Riemannian manifold with constant Ricci curvature $\mr{Ric} = \kappa\in\R$, then
\begin{equation}
\forall \ t\geq0,\qquad  \nabla P_t f = e^{-\kappa t/2} P_t \nabla f
\end{equation}
for every smooth function $f:\MM\to\R$. In \cite{Wan19}, Wang investigated commutation relations of this form for second order derivatives instead of the gradient $\nabla$. We shall use the following result.

\begin{theorem} [Wang] \label{thm:wang}
A Riemannian manifold $(\MM,\GG)$ of dimension $n$ has constant sectional curvature $\kappa\in\R$ if and only if the Hessian tensor of every smooth function $f:\MM\to\R$ satisfies
\begin{equation} \label{eq:wang}
\forall \ r\geq0, \qquad \nabla^2P_rf = e^{-n\kappa r} P_r\nabla^2f + \frac{1-e^{-n\kappa r}}{n} P_r\Delta f\cdot \GG.
\end{equation}
\end{theorem}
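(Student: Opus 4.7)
The proof naturally splits into two directions; the forward implication (constant sectional curvature $\Rightarrow$ formula \eqref{eq:wang}) is the substantive part.

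For the forward direction, my plan is to define the ``defect" tensor
\begin{equation*}
\phi(r) \eqdef \nabla^2 P_rf - e^{-n\kappa r}P_r\nabla^2 f - \frac{1-e^{-n\kappa r}}{n}P_r\Delta f\cdot \GG,
\end{equation*}
where $P_r$ acts on 2-tensors via the connection (Bochner) Laplacian $\Delta_B$. Clearly $\phi(0)=0$. The strategy is to show that $\phi$ satisfies a tensor heat equation $\partial_r\phi=\tfrac12\Delta_B\phi$, so that uniqueness of solutions with vanishing initial data on a complete manifold with bounded Ricci forces $\phi\equiv 0$, which is precisely \eqref{eq:wang}.

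The key ingredient—and the main technical obstacle—is the following pointwise identity on a space form of constant sectional curvature $\kappa$, valid for every smooth $f:\MM\to\R$:
\begin{equation*}
\Delta_B\nabla^2 f - \nabla^2\Delta f = 2n\kappa\,\nabla^2 f - 2\kappa\,\Delta f\cdot \GG.
\end{equation*}
To derive this, I would apply the Ricci identity $[\nabla_k,\nabla_\ell]T = -R\cdot T$ twice to pass $\Delta_B=g^{kl}\nabla_k\nabla_l$ through $\nabla_i\nabla_j$, using $\nabla R=0$ on space forms to discard derivatives of the curvature tensor. After contracting and substituting $R_{ijkl}=\kappa(\GG_{ik}\GG_{jl}-\GG_{il}\GG_{jk})$ and $\mr{Ric}=(n-1)\kappa\,\GG$, the eight or so curvature terms arising from the commutator collapse to $2n\kappa\,\nabla^2 f-2\kappa\,\Delta f\cdot\GG$; a quick trace-check against $\Delta_B\nabla^2 f_{ii}=\Delta^2 f=\nabla^2\Delta f_{ii}$ gives a sanity test. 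Once this Bochner-type identity is in hand, a direct computation—using $\partial_r\nabla^2 P_rf=\tfrac12\nabla^2\Delta P_rf$, the scalar heat equation, the parallelism $\nabla\GG=0$, and the commutation $\Delta P_rf=P_r\Delta f$—shows that the curvature correction $2n\kappa\,\nabla^2 P_rf$ from the Bochner formula is precisely cancelled by the derivative of the prefactor $e^{-n\kappa r}$, while the term $-2\kappa\,\Delta P_r f\cdot\GG$ is absorbed by the derivative of $\frac{1-e^{-n\kappa r}}{n}P_r\Delta f\cdot \GG$. One then verifies that each side of $\partial_r\phi=\tfrac12\Delta_B\phi$ matches term-by-term.

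For the converse, I would differentiate \eqref{eq:wang} in $r$ at $r=0$, which yields exactly the Bochner identity above as a pointwise equality for every smooth $f$. On a \emph{general} Riemannian manifold, the analogous commutator $\Delta_B\nabla^2 f-\nabla^2\Delta f$ contains a full contraction of the Riemann tensor with $\nabla^2 f$ together with a term linear in $\nabla\mr{Ric}\cdot \nabla f$. Testing the forced identity against functions $f$ with prescribed 1-jet (respectively 2-jet) at a point shows first that $\nabla\mr{Ric}=0$ and then that the algebraic action $R_{ikjl}(\nabla^2 f)^{kl}$ must coincide with $\kappa(n\,\nabla^2f-\Delta f\cdot \GG)_{ij}$ for every symmetric 2-tensor in place of $\nabla^2 f$; this in turn forces $R_{ijkl}=\kappa(\GG_{ik}\GG_{jl}-\GG_{il}\GG_{jk})$, i.e.~constant sectional curvature $\kappa$. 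The hardest part throughout remains the careful index bookkeeping in the Bochner identity; all other steps are essentially a uniqueness-for-the-heat-equation argument and algebraic unpacking.
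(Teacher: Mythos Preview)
The paper does not prove Theorem~\ref{thm:wang}; it is quoted from Wang \cite{Wan19} and used as a black box, so there is no ``paper's own proof'' to compare against.

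Your outline is essentially the right way to go about it, with one correctable slip. The Bochner-type identity you isolate,
\[
\Delta_B\nabla^2 f-\nabla^2\Delta f \;=\; 2n\kappa\,\nabla^2 f-2\kappa\,\Delta f\cdot\GG,
\]
is indeed what holds on a space form (its traceless part reads $(\Delta_B\nabla^2 f-\nabla^2\Delta f)_0=2n\kappa(\nabla^2 f)_0$, and the trace vanishes, consistent with your sanity check). However, if you carry out the computation of $\partial_r\phi-\tfrac12\Delta_B\phi$ term by term, using this identity together with $\Delta P_rf=P_r\Delta f$, you do \emph{not} get zero: the residual is exactly
\[
-n\kappa\,\nabla^2 P_rf+n\kappa\,e^{-n\kappa r}P_r\nabla^2 f+\kappa(1-e^{-n\kappa r})P_r\Delta f\cdot\GG \;=\; -n\kappa\,\phi(r).
\]
So $\phi$ satisfies $\partial_r\phi=\tfrac12\Delta_B\phi-n\kappa\,\phi$, a tensor heat equation with a zeroth-order potential, rather than the pure heat equation. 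This does not damage the argument at all---uniqueness for this linear parabolic equation with $\phi(0)=0$ still yields $\phi\equiv 0$---but your sentence ``the curvature correction $2n\kappa\,\nabla^2 P_rf$ from the Bochner formula is precisely cancelled by the derivative of the prefactor $e^{-n\kappa r}$'' is not quite right: the cancellation happens only up to a multiple of $\phi$ itself. Your converse argument (differentiating \eqref{eq:wang} at $r=0$ and then testing the resulting pointwise identity against arbitrary $1$- and $2$-jets to force $R_{ijkl}=\kappa(\GG_{ik}\GG_{jl}-\GG_{il}\GG_{jk})$) is the standard route and is sound.
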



\section{Intrinsic dimensional logarithmic Sobolev inequality in space forms}
\label{sec:LSImanifold}

Having explained the necessary background we can now present  Theorem \ref{thm:manifolds}. We first recall that when $\frac{\diff\mu}{\diff P_T\delta_x}=f$, we have
\begin{equation*}
\HH(\mu \| P_T\delta_x) =P_T(f\log f)(x)-P_Tf(x)\log P_Tf(x).
\end{equation*}
\begin{theorem} \label{thm:manifolds_sec}
Let $(\MM,\GG)$ be an $n$-dimensional Riemannian manifold with constant sectional curvature $\kappa\in\R\setminus\{0\}$ with the associated  heat semigroup $\{P_t\}_{t\ge 0}$. Fix $T>0$, $x\in\MM$, a smooth positive function $f:\MM\to\R$ with $\int_M f\,\diff P_T\delta_x=1$, and let $\mu$ be the probability measure with $\frac{\diff\mu}{\diff P_T\delta_x}=f$. Define the 2-tensor $C(t) = \frac{e^{n\kappa t}}{n\kappa}A+tB$ for $t\in\R$, where  $A, B$ are 2-tensors given by
\begin{equation}
\begin{cases}
A =- e^{-n\kappa T}  \big(P_T\nabla^2 f(x) - \frac{1}{n} P_T \Delta f(x)\cdot \GG\big) \\ B = \Big(\frac{(n-1)\kappa}{2} - \frac{\Delta P_T f(x)}{n}\Big) \cdot \GG,
\end{cases}
\end{equation}
and let $\mb{E}_\mu \big(\nabla \log  f\big)^{\otimes 2}\eqdef  \mb{E}\big[ \big( \nabla^{\mr{hor}} \log {\li f}(\Psi_T)\big)^{\otimes 2}\big] $.  Then, we have the local intrinsic dimensional logarithmic Sobolev inequality
\begin{align} \label{eq:manifolds-lsi_sec}
\begin{split}
&\HH(\mu \| P_T\delta_x)\\
& \leq \frac{1}{2} \int_0^T\!\!\!  \mr{tr}\Big[e^{C(t)-C(T)} \Big(\GG + \mb{E}_\mu \big(\nabla \log f\big)^{\otimes 2}\!\! \int_t^T e^{2C(s)-2C(T)} \,\diff s \Big)^{-1}\!\! \mb{E}_\mu \big(\nabla \log f\big)^{\otimes 2}  e^{C(t)-C(T)}\Big]\,\diff t,
\end{split}
\end{align}
and the reverse local intrinsic dimensional logarithmic Sobolev inequality
\begin{align} \label{eq:manifolds-rev-lsi_sec}
\begin{split}
&\HH(\mu \| P_T\delta_x)\\
& \geq \frac{1}{2} \int_0^T \mr{tr}\Big[e^{C(t)-C(0)} \Big( \GG -\big( \nabla \log P_Tf(x)\big)^{\otimes 2} \int_0^t e^{2C(s)-2C(0)}\,\diff s\Big)^{-1} \big( \nabla \log P_Tf(x)\big)^{\otimes 2} e^{C(t)-C(0)}\Big] \,\diff t.
\end{split}
\end{align}
\end{theorem}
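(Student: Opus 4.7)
The plan is to use Lehec's stochastic representation (Theorem \ref{thm:Lehec}) to express the relative entropy as $\tfrac12\mb{E}\int_0^T |V_t|^2\,dt$, where $V_t:=\nabla^{\mr{hor}}\tilde F_t(\Psi_t)\in\R^n$, $\tilde F_t:=\log P_{T-t}f$, and $\Psi_t$ is the F\"ollmer process on $\ms{O}(\MM)$. I would derive a master matrix stochastic differential equation for the rank-one PSD process $N_t:=V_tV_t^T$ and extract from it upper and lower PSD bounds on $\overline N_t:=\mb{E}[N_t]$ whose traces, integrated over $[0,T]$, yield respectively the forward inequality \eqref{eq:manifolds-lsi_sec} and its reverse \eqref{eq:manifolds-rev-lsi_sec}. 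The advantage of tracking the full matrix $N_t$ rather than its trace (which is what Lehec's formula computes directly) is that the eigenstructure retains the intrinsic dimensional information encoded by $C(t)$.

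First I would compute the SDE for $V_t$ by applying Lemma \ref{lem:sde-for-Hi} componentwise. Since constant sectional curvature $\kappa$ forces $\mr{Ric}=(n-1)\kappa\,\GG$, this produces $dV_t^i=\text{mart}_i+\tfrac{(n-1)\kappa}{2}V_t^i\,dt$ with covariation $d[V^i,V^j]_t=(\mathbf H_t^2)_{ij}\,dt$, where $\mathbf H_t:=(\HH_k\HH_i\tilde F_t(\Psi_t))_{k,i}$ is the Hessian of $F_t$ at $X_t$ in the frame $\Psi_t$. It\^o's product rule then gives
\begin{equation*}
dN_t=\text{martingale}+\bigl((n-1)\kappa N_t+\mathbf H_t^2\bigr)\,dt.
\end{equation*}
Writing $\mathbf H_t=\mathbf L_t-N_t$ with $\mathbf L_t:=\Psi_t^{-1}\nabla^2 P_{T-t}f(X_t)\Psi_t/P_{T-t}f(X_t)$ and invoking Wang's commutation identity (Theorem \ref{thm:wang}) rewrites $\nabla^2P_{T-t}f$ as $e^{-n\kappa(T-t)}P_{T-t}\nabla^2 f$ plus a scalar multiple of $\GG$; combined with the identity $\mb{E}[\Delta P_{T-t}f(X_t)/P_{T-t}f(X_t)]=\Delta P_T f(x)$, which follows from the fact that the F\"ollmer marginal of $X_t$ has density $P_{T-t}f(y)\,p_t(x,y)$ and hence is independent of $t$, this pins down the drift of $\mb{E}[\mathbf H_t]$ to be exactly $C'(t)=e^{n\kappa t}A+B$. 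This is the origin of the tensors $A$, $B$, and $C(t)$ appearing in the statement.

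The decisive step is then to compare $\mb{E}[\mathbf H_t^2]$ with $(\mb{E}\,\mathbf H_t)^2$: since $X\mapsto X^2$ is operator convex on symmetric matrices, matrix Jensen gives $\mb{E}[\mathbf H_t^2]\succeq(\mb{E}\,\mathbf H_t)^2$, and propagating this through the It\^o SDE yields a matrix Riccati differential inequality for $\overline N_t$ of the schematic form $\overline N_t'\succeq C'(t)\overline N_t+\overline N_t C'(t)+\overline N_t^{\,2}+(\text{linear in }\overline N_t)$. Because $B$ is proportional to $\GG$, all the tensors $\{C(s)\}_{s\in\R}$ commute among themselves, so the conjugation $Y_t:=e^{-C(t)}\overline N_t e^{-C(t)}$ linearizes the inequality and lets me solve the matrix comparison problem explicitly in terms of integrals of $e^{2C(s)}$. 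Taking the trace and integrating over $[0,T]$ while matching the boundary values $\overline N_0=(\nabla\log P_Tf(x))^{\otimes 2}$ and $\overline N_T=\mb{E}_\mu(\nabla\log f)^{\otimes 2}$ produces the stated formulas: \eqref{eq:manifolds-lsi_sec} using $\overline N_T$ as the reference datum for the backward-propagated Riccati solution, and \eqref{eq:manifolds-rev-lsi_sec} using $\overline N_0$ as the datum for the forward-propagated one. The main obstacle I anticipate is the careful bookkeeping of this Riccati/conjugation step, in particular ensuring that both directions of the master matrix inequality emerge from a single It\^o computation and that the correct endpoint data is paired with each of the two inequalities.
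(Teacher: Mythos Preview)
Your proposal is correct and follows essentially the same route as the paper: Lehec's formula, the It\^o computation for $V_t$ via Lemma~\ref{lem:sde-for-Hi} combined with Wang's Theorem~\ref{thm:wang} and the change-of-measure identity, matrix Jensen $\mb{E}[\mathbf H_t^2]\succeq(\mb{E}\mathbf H_t)^2$ to obtain the Riccati inequality $\overline N_t'\succeq \overline N_t^{\,2}+C'(t)\overline N_t+\overline N_t C'(t)$, and the conjugation by $e^{-C(t)}$ to solve it (packaged in the paper as Lemma~\ref{lem:bernoulli}, which also includes an $\varepsilon$-perturbation to guarantee invertibility of $\overline N_t$). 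One small correction: Wang's theorem pins down $\mb{E}[\mathbf L_t]=u(t):=e^{-n\kappa(T-t)}J_T+\tfrac{c_T}{n}\msf{Id}_n$ rather than $C'(t)$; the coefficient $C'(t)=-u(t)+\tfrac{(n-1)\kappa}{2}\msf{Id}_n$ appears only after you expand $(\mb{E}\mathbf H_t)^2=(\overline N_t-u(t))^2$, discard the positive term $u(t)^2$, and absorb the $(n-1)\kappa$ Ricci drift.
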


The proof of Theorem \ref{thm:manifolds_sec} (see also the stronger Theorem \ref{thm:manifolds_opt}) is modeled after the stochastic proof by Eldan, Lehec, and Shenfeld \cite{ELS20} of the intrinsic dimensional logarithmic Sobolev inequality in flat space \eqref{eq:lsi**} (and a weaker reverse inequality \cite[Theorem 3]{ELS20}). A basic ingredient of this approach is deriving a stochastic differential equation for the \emph{tensor} whose trace is the term $ \big| \nabla \log P_{T-t}f(X_t)\big|^2$ in \eqref{eq:lehec-formula}. This is the content of the next lemma for which we establish the following notation. Let $\{B_t\}_{t\geq0}$ be a Brownian motion on $\MM$ with $B_0=x$.  As before, we denote by $F_t$ the function $\log P_{T-t}f$ and by ${\li F_t}$ its horizontal lift on $\ms{O}(\MM)$. Moreover, we shall denote by $G_t$ the function $\exp F_t=P_{T-t}f$ and by ${\li G}_t = \exp {\li F}_t$ its lift. Consider the random matrices $\msf{Q}(t),\msf{P}(t)\in\mathbb M_n(\R)$ (the space of $n\times n$ square matrices over $\R$) given by
\begin{align}\label{eq:Qdef}
\begin{split}
&\msf{Q}_{ij}(t) \eqdef \HH_i\HH_j {\li F_t}(\Psi_t) = \HH_j\HH_i {\li F_t}(\Psi_t) = \msf{Q}_{ji}(t),\\
&\msf{P}(t)\eqdef\msf{Q}(t)^2.
\end{split}
\end{align}
We can now derive the aforementioned stochastic differential equation.

\begin{lemma} \label{lem:Q}
Let $(\MM,\GG)$ be a Riemannian manifold. In the terminology above, for every $i,j\in\{1,\ldots,n\}$, there exists a martingale $\{M_{ij}(t)\}_{t\in[0,T]}$ such that for $t\in[0,T]$, we have
\begin{equation*}
\HH_i{\li F_t}(\Psi_t) \cdot \HH_j{\li F}_t(\Psi_t) = M_{ij}(t) +\frac{1}{2} \int_0^t \mr{Ric}\big( \nabla F_s(X_s), \HH_i {\li F}_s(\Psi_s) \cdot \Psi_s e_j + \HH_j {\li F}_s(\Psi_s) \cdot \Psi_s e_i \big)\, \diff s + \int_0^t \msf{P}_{ij}(s) \, \diff s.
\end{equation*}
\end{lemma}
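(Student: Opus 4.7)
The plan is to apply It\^o's product rule to the two processes $\HH_i{\li F}_t(\Psi_t)$ and $\HH_j{\li F}_t(\Psi_t)$, whose SDEs are provided by Lemma \ref{lem:sde-for-Hi}. First, I would rewrite the horizontal gradient in that SDE in coordinates: since $\bigl(\nabla^{\mr{hor}}\HH_i{\li F}_t\bigr)_k = \HH_k\HH_i{\li F}_t = \msf{Q}_{ki}(t)$, Lemma \ref{lem:sde-for-Hi} gives
\begin{equation*}
\diff\HH_i{\li F}_t(\Psi_t) = \sum_{k=1}^n \msf{Q}_{ki}(t)\,\diff W_t^k + \tfrac{1}{2}\mr{Ric}\bigl(\nabla F_t(X_t),\Psi_t e_i\bigr)\,\diff t,
\end{equation*}
and analogously for the $j$-th component.

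Next, I would apply It\^o's product rule to the two It\^o processes $\HH_i{\li F}_t(\Psi_t)$ and $\HH_j{\li F}_t(\Psi_t)$. The drift part produces
\begin{equation*}
\tfrac{1}{2}\HH_j{\li F}_t(\Psi_t)\,\mr{Ric}\bigl(\nabla F_t(X_t),\Psi_t e_i\bigr) + \tfrac{1}{2}\HH_i{\li F}_t(\Psi_t)\,\mr{Ric}\bigl(\nabla F_t(X_t),\Psi_t e_j\bigr),
\end{equation*}
which by linearity of $\mr{Ric}(\cdot,\cdot)$ in its second argument collapses to $\tfrac{1}{2}\mr{Ric}\bigl(\nabla F_t(X_t),\,\HH_i{\li F}_t(\Psi_t)\,\Psi_t e_j + \HH_j{\li F}_t(\Psi_t)\,\Psi_t e_i\bigr)$, matching exactly the Ricci integrand in the statement. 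The stochastic integrals combine into a local martingale, which I will absorb into $M_{ij}(t)$ (true martingality follows under the standing regularity assumptions on $f$ that are tacitly in force for Lehec's representation, as noted right after Theorem \ref{thm:Lehec}).

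The remaining ingredient is the quadratic covariation contribution. Since $\{W_t\}_{t\ge 0}$ is a standard Brownian motion on $\R^n$, we have $\diff\langle W^k,W^{k'}\rangle_t = \delta_{kk'}\,\diff t$, so
\begin{equation*}
\diff\bigl\langle \HH_i{\li F}_t(\Psi_t),\,\HH_j{\li F}_t(\Psi_t)\bigr\rangle_t = \sum_{k=1}^n \msf{Q}_{ki}(t)\msf{Q}_{kj}(t)\,\diff t = \bigl(\msf{Q}(t)^{\!\top}\msf{Q}(t)\bigr)_{ij}\,\diff t.
\end{equation*}
Here one uses the symmetry $\msf{Q}_{ij}(t)=\msf{Q}_{ji}(t)$ already recorded in \eqref{eq:Qdef}, which itself follows from \cite[Lemma~5.5.1]{Hsu02} since $[\HH_i,\HH_j]$ is a vertical vector field and thus annihilates any lifted function. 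This symmetry is the key identity that turns $\msf{Q}^{\!\top}\msf{Q}$ into $\msf{Q}^2 = \msf{P}$, and it is the one step I would double-check most carefully since everything else is a routine application of It\^o calculus. Integrating the resulting SDE from $0$ to $t$ and packaging all martingale contributions (including the initial value $\HH_i{\li F}_0(\Psi_0)\HH_j{\li F}_0(\Psi_0)$) into $M_{ij}(t)$ yields the stated formula.
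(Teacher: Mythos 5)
Your proposal is correct and follows essentially the same route as the paper: apply It\^o's product rule to the two processes $\HH_i{\li F}_t(\Psi_t)$, $\HH_j{\li F}_t(\Psi_t)$ using Lemma \ref{lem:sde-for-Hi}, combine the two Ricci drift terms by linearity, and identify the quadratic-covariation term with $\msf{P}_{ij}(t)$ via the symmetry $\msf{Q}_{ik}=\msf{Q}_{ki}$ (the paper records this as $\msf{P}_{ij}=\langle\nabla^{\mr{hor}}\HH_i{\li F}_t,\nabla^{\mr{hor}}\HH_j{\li F}_t\rangle$, which is exactly your $\sum_k\msf{Q}_{ki}\msf{Q}_{kj}$ identification). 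The only cosmetic difference is that you unpack the horizontal gradient into coordinates and explicitly flag the local-vs-true martingale issue, which the paper leaves implicit.
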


\begin{proof}
Observe that by the chain rule, we have (omitting the dependence on $\Psi_t$ on the right-hand side below)
\begin{equation} \label{eq:Q1}
\msf{Q}_{ij}(t) = \HH_i\HH_j {\li F}_t =  \HH_i\HH_j \log{\li G}_t = \frac{\HH_i\HH_j {\li G}_t}{{\li G}_t} - \HH_i {\li F}_t\cdot \HH_j{\li F}_t
\end{equation}
and by the definition and symmetry of the matrix $\msf{Q}(t)$,
\begin{equation} \label{eq:Q2}
\msf{P}_{ij}(t) = \sum_{k=1}^n \msf{Q}_{ik}(t) \msf{Q}_{kj}(t) = \sum_{k=1}^n \HH_k \HH_i {\li F}_t \cdot \HH_k\HH_j {\li F}_t = \left\langle \nabla^{\mr{hor}} \HH_i {\li F}_t, \nabla^{\mr{hor}}\HH_j{\li F}_t\right\rangle.
\end{equation}
Combining It\^o's product rule with Lemma \ref{lem:sde-for-Hi}, we get that for $i,j\in\{1,\ldots,n\}$,
\begin{equation} \label{eq:Q3}
\begin{split}
\diff\big\{ \HH_i{\li F_t}(\Psi_t) \cdot &\HH_j{\li F}_t(\Psi_t)\big\}  = \HH_i {\li F}_t(\Psi_t) \diff \HH_j {\li F}_t(\Psi_t) + \HH_j {\li F}_t(\Psi_t) \diff \HH_i {\li F}_t(\Psi_t) + \diff\HH_j {\li F}_t(\Psi_t) \cdot \diff \HH_i {\li F}_t(\Psi_t)
\\ & = \left\{ \frac{1}{2}\mr{Ric}\big( \nabla F_t, \HH_i {\li F}_t \cdot \Psi_t e_j + \HH_j {\li F}_t \cdot \Psi_te_i \big) + \left\langle \nabla^{\mr{hor}} \HH_i {\li F}_t, \nabla^{\mr{hor}}\HH_j{\li F}_t\right\rangle \right\}\,\diff t
\\ & \qquad \qquad \qquad \qquad \qquad \qquad  \qquad \quad \quad +  \left\langle \HH_i{\li F}_t \nabla^{\mr{hor}}\HH_j{\li F}_t + \HH_j {\li F}_t \nabla^{\mr{hor}}\HH_i{\li F}_t, \,\diff W_t\right\rangle,
\end{split}
\end{equation}
where in the right-hand side we again omitted the dependence on $\Psi_t$ and $X_t$. Denoting the term in the last line by $\diff \MM_{ij}(t)$, it is clear that $\{M_{ij}(t)\}_{t\in[0,T]}$ is a martingale and \eqref{eq:Q3} becomes
\begin{equation*} \label{eq:Q4}
\HH_i{\li F_t}(\Psi_t) \cdot \HH_j{\li F}_t(\Psi_t) = \MM_{ij}(t) +\frac{1}{2} \int_0^t \mr{Ric}\big( \nabla F_s(X_s), \HH_i {\li F}_s(\Psi_s) \cdot \Psi_s e_j + \HH_j {\li F}_s(\Psi_s) \cdot \Psi_s e_i \big)\, \diff s + \int_0^t \msf{P}_{ij}(s) \, \diff s,
\end{equation*}
where we also used \eqref{eq:Q2}. This is the desired identity.
\end{proof}

The stochastic differential equation of Lemma \ref{lem:Q} will allow us to derive a differential equation for 
\begin{equation}
\label{eq:vij}
\forall \ t\in(0,T),\qquad v_{ij}(t) \eqdef \mb{E}\big[ \HH_i {\li F}_t(\Psi_t) \cdot \HH_j {\li F}_t(\Psi_t)\big],\qquad i,j\in\{1,\ldots,n\};
\end{equation}
note that with this notation, \eqref{eq:lehec-formula} reads $\HH(\mu \| P_T\delta_x)  = \frac{1}{2}\int_0^T \mr{tr}\big[ v(t)\big] \,\diff t$. We will then turn the differential equation into a differential \emph{inequality} from which Theorem \ref{thm:manifolds_opt} and Theorem \ref{thm:manifolds_sec}  shall follow. To derive the  differential equation for $v(t)$ we start by defining
\begin{equation}
\label{eq:mn}
m(t) \eqdef \mb{E}[-\msf{Q}(t)]\quad \text{and}\quad n(t) \eqdef \mb{E}[\msf{P}(t)]. 
\end{equation}
Assuming that the underlying manifold $\MM$ is Einstein and taking expectations, we deduce the following differential equation for $v(t)$. 

\begin{lemma}\label{lem:v(t)ODE}
Let $(\MM,\GG)$ be an Einstein manifold with constant Ricci curvature $\mr{Ric}=\rho\GG$ for some $\rho\in\R$. For every $i,j\in\{1,\ldots,n\}$ and $t\in(0,T)$, we have
\begin{equation} \label{eq:for-einstein}
\frac{\diff v_{ij}(t)}{\diff t} = n_{ij}(t) + \rho v_{ij}(t).
\end{equation}
\end{lemma}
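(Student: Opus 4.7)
The plan is to take expectations of the identity established in Lemma \ref{lem:Q} and then differentiate, exploiting the Einstein assumption to turn the Ricci term into a multiple of $v_{ij}(t)$ itself.

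First, I would take the expectation of the identity in Lemma \ref{lem:Q}. Since $\{M_{ij}(t)\}_{t\in[0,T]}$ is a martingale with $M_{ij}(0)=0$, the stochastic term drops out and, using the definitions \eqref{eq:vij} and \eqref{eq:mn} along with Fubini, we obtain
\begin{equation*}
v_{ij}(t) = \frac{1}{2} \int_0^t \mb{E}\Big[ \mr{Ric}\big( \nabla F_s(X_s), \HH_i {\li F}_s(\Psi_s) \cdot \Psi_s e_j + \HH_j {\li F}_s(\Psi_s) \cdot \Psi_s e_i \big)\Big] \,\diff s + \int_0^t n_{ij}(s)\,\diff s.
\end{equation*}

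Next, I would use the Einstein hypothesis $\mr{Ric} = \rho\, \GG$ to simplify the integrand. By bilinearity and the identity $\HH_k {\li F}_s(\Psi_s) = \langle \nabla F_s(X_s), \Psi_s e_k\rangle$ (which follows from the definition of the horizontal lift, cf.~\eqref{eq:laplacian-ident}), we have
\begin{equation*}
\mr{Ric}\big( \nabla F_s, \HH_i {\li F}_s \cdot \Psi_s e_j + \HH_j {\li F}_s \cdot \Psi_s e_i \big) = \rho \big( \HH_i {\li F}_s \cdot \HH_j {\li F}_s + \HH_j {\li F}_s \cdot \HH_i {\li F}_s\big) = 2\rho\, \HH_i {\li F}_s(\Psi_s)\cdot \HH_j {\li F}_s(\Psi_s).
\end{equation*}
Substituting, the factor $\tfrac{1}{2}$ cancels the $2$ and we arrive at the integral equation
\begin{equation*}
v_{ij}(t) = \rho \int_0^t v_{ij}(s)\,\diff s + \int_0^t n_{ij}(s)\,\diff s.
\end{equation*}
Differentiating both sides in $t$ yields \eqref{eq:for-einstein}.

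I expect no substantial obstacle here: all the hard work—deriving the It\^o-type decomposition with the commutator term identified as a Ricci curvature, and taming the second-order horizontal derivatives into $\msf{P}(t)$—has been done in Lemma \ref{lem:sde-for-Hi} and Lemma \ref{lem:Q}. The only point requiring care is verifying integrability so that the martingale $M_{ij}$ has zero expectation and Fubini applies, which is justified by the standing regularity assumptions on $f$ and the bounded-from-below Ricci curvature (in fact, constant, since $(\MM,\GG)$ is Einstein) ensuring nonexplosion of the F\"ollmer process $\{\Psi_t\}_{t\in[0,T]}$ and sufficient moment control on $\nabla^{\mr{hor}}{\li F}_t(\Psi_t)$.
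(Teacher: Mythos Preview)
Your proof is correct and follows essentially the same route as the paper's: simplify the Ricci term via the Einstein assumption to $2\rho\,\HH_i{\li F}_s\cdot\HH_j{\li F}_s$, take expectations in Lemma~\ref{lem:Q} so that the martingale term vanishes, and differentiate the resulting integral identity. The only cosmetic difference is the order of operations (you take expectations before simplifying the Ricci term, the paper does the reverse), which is immaterial.
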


\begin{proof}
Since $\MM$ has constant Ricci curvature $\rho\GG$, we have
\begin{equation}
\mathrm{Ric}\big( \nabla F_s(X_s), \HH_i {\li F}_s(\Psi_s)\cdot \Psi_s e_j\big) = \rho \HH_i {\li F}_s(\Psi_s)\cdot \GG(\nabla F_s(X_s), \Psi_s e_j) = \rho\HH_i {\li F}_s(\Psi_s)\cdot \HH_j {\li F}_s(\Psi_s).
\end{equation}
Plugging this in the rightmost term of Lemma \ref{lem:Q}, we get that
\begin{equation}
\HH_i{\li F_t}(\Psi_t) \cdot \HH_j{\li F}_t(\Psi_t) = \MM_{ij}(t) +\rho \int_0^t \HH_i {\li F}_s(\Psi_s)\cdot \HH_j {\li F}_s(\Psi_s)\,\diff s + \int_0^t \msf{P}_{ij}(s) \, \diff s.
\end{equation}
The result follows after taking expectation (since $\mb{E} M_{ij}(t) = M_{ij}(0) = 0$) and differentiating.
\end{proof}
In order to turn \eqref{eq:for-einstein} into a differential \emph{inequality}  we will use Jensen's inequality $n(t)\succeq m(t)^2$ where we used $\msf{P}=\msf{Q}^2$. To use the latter inequality we need to better understand the term $m(t)$. On manifolds of constant curvature, $m(t)$ takes the following simple form. 

\begin{lemma}
Let $(\MM,\GG)$ be an $n$-dimensional Riemannian manifold with constant sectional curvature $\kappa \in\R$. For every $i,j\in\{1,\ldots,n\}$ and $t\in(0,T)$, we have
\begin{equation} \label{eq:from-v-to-m}
m_{ij}(t) = v_{ij}(t) - e^{-n\kappa(T-t)} P_T \nabla^2 f(\Phi_0e_i, \Phi_0e_j)(x) - \frac{1-e^{-n\kappa(T-t)}}{n} P_T\Delta f(x)\cdot \delta_{ij}.
\end{equation}
\end{lemma}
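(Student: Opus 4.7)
My plan is to identify $m_{ij}(t)$ via the representation \eqref{eq:Q1} for $\msf{Q}_{ij}(t)$, a Girsanov change of measure from the F\"ollmer process to horizontal Brownian motion, and Wang's commutation identity (Theorem \ref{thm:wang}). Taking expectations in \eqref{eq:Q1} directly gives
\begin{equation*}
m_{ij}(t) \;=\; v_{ij}(t) \;-\; \mb{E}\!\left[\frac{\HH_i\HH_j{\li G}_t(\Psi_t)}{{\li G}_t(\Psi_t)}\right],
\end{equation*}
so the task reduces to computing the right-hand expectation and matching it with the two correction terms in the claim.

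For this, observe that the F\"ollmer SDE \eqref{eq:def-follmer} differs from the horizontal-Brownian SDE \eqref{eq:bm} by a Girsanov drift $\Psi_t^{-1}\nabla\log P_{T-t}f(X_t)\,\diff t$. Standard Girsanov on the frame bundle then says that the law of $\Psi_t$ is absolutely continuous with respect to the law of horizontal Brownian motion $\Phi_t$ starting from $\Phi_0=\Psi_0$, with density ${\li G}_t(\Phi_t)/P_Tf(x) = {\li G}_t(\Phi_t)$, where we use $P_Tf(x)=\int f\,\diff P_T\delta_x=1$. Consequently,
\begin{equation*}
\mb{E}\!\left[\frac{\HH_i\HH_j{\li G}_t(\Psi_t)}{{\li G}_t(\Psi_t)}\right] \;=\; \mb{E}\!\left[\HH_i\HH_j{\li G}_t(\Phi_t)\right].
\end{equation*}
Since ${\li G}_t$ is the lift of the scalar $P_{T-t}f$, the basic identity $\HH_i\HH_j\tilde h(\bf{u}) = \nabla^2 h(\bf{u}e_i,\bf{u}e_j)(\pi\bf{u})$ (immediate from the definitions of the horizontal vector fields and parallel transport) rewrites the integrand as $\nabla^2 P_{T-t}f(\Phi_te_i,\Phi_te_j)(B_t)$. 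Wang's Theorem \ref{thm:wang} applied with $r=T-t$, together with $\GG(\Phi_te_i,\Phi_te_j)=\delta_{ij}$, then expands this into
\begin{equation*}
e^{-n\kappa(T-t)}\,P_{T-t}\nabla^2 f(\Phi_te_i,\Phi_te_j)(B_t) \;+\; \tfrac{1-e^{-n\kappa(T-t)}}{n}\, P_{T-t}\Delta f(B_t)\,\delta_{ij}.
\end{equation*}

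To finish, one takes expectations of both pieces. The scalar piece is handled by the Markov property for $\{B_t\}_{t\ge0}$, yielding $\mb{E}[P_{T-t}\Delta f(B_t)] = P_t P_{T-t}\Delta f(x) = P_T\Delta f(x)$. The tensor piece is handled by the analogous Markov property for horizontal Brownian motion on $\ms{O}(\MM)$, using the convention $P_T\nabla^2 f(\Phi_0 e_i,\Phi_0 e_j)(x) = \mb{E}[\nabla^2 f(\Phi_Te_i,\Phi_Te_j)(B_T)]$ from the preliminaries, which gives $\mb{E}[P_{T-t}\nabla^2 f(\Phi_te_i,\Phi_te_j)(B_t)] = P_T\nabla^2 f(\Phi_0 e_i,\Phi_0 e_j)(x)$. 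Combining these produces the stated formula for $m_{ij}(t)$. The step most in need of care is the Girsanov identification: one must check that the density process ${\li G}_s(\Phi_s)$, $s\in[0,t]$, is a genuine (not merely local) martingale, which is ensured by the smoothness and positivity hypotheses on $f$ underlying Lehec's Theorem \ref{thm:Lehec}.
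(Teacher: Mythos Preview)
Your proposal is correct and follows essentially the same route as the paper's proof: take expectations in \eqref{eq:Q1}, use the Girsanov change of measure from $\Psi_t$ to $\Phi_t$ (the paper phrases this via the tower property, noting $\Psi_t$ has law $f(B_T)$ against $\Phi_t$, but it is the same computation), apply the Hessian identity and Wang's commutation formula, and then identify the expectations via the semigroup/parallel-transport definition of $P_T$ on tensors.
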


\begin{proof}
Taking expectations in \eqref{eq:Q1}, we obtain
\begin{equation} \label{eq:m00}
m_{ij}(t) = \mb{E}[-\msf{Q}_{ij}(t)] \stackrel{\eqref{eq:Q1}}{=} v_{ij}(t) -\mb{E}\left[ \frac{\HH_i\HH_j {\li G}_t(\Psi_t)}{{\li G}_t(\Psi_t)}\right].
\end{equation}
It follows from \eqref{eq:def-follmer} and \eqref{eq:bm} that $\Psi_t$ has law $f(B_T)$ with respect to $\Phi_t$ for every $t\in[0,T]$ (see also the proof of \cite[Theorem~7]{Leh17} for an argument based on Girsanov's theorem). Therefore, by the tower property of conditional expectation, we have
\begin{equation} \label{eq:change-measure}
\begin{split}
\mb{E}\left[ \frac{\HH_i\HH_j {\li G}_t(\Psi_t)}{{\li G}_t(\Psi_t)}\right] = \mb{E}\left[ \frac{\HH_i\HH_j {\li G}_t(\Phi_t)}{{\li G}_t(\Phi_t)} f(B_T) \right] &= \mb{E}\left[ \frac{\HH_i\HH_j {\li G}_t(\Phi_t)}{{\li G}_t(\Phi_t)} \mb{E}\big[f(B_T)\big| \{\Phi_r\}_{r\leq t}\big] \right] 
\\ & =  \mb{E}\left[ \frac{\HH_i\HH_j {\li G}_t(\Phi_t)}{{\li G}_t(\Phi_t)} P_{T-t}f(B_t)\right] = \mb{E}\big[ \HH_i\HH_j {\li G}_t(\Phi_t) \big].
\end{split}
\end{equation}
Recall that for any function $h:\MM\to\R$ with horizontal lift ${\li h}$, we have
\begin{equation}\label{eq:hess-ident}
\forall \ {\bf u}\in\ms{O}(\MM),\qquad \HH_i\HH_j {\li h} ( {\bf u}) = \nabla^2h({\bf u}e_i, {\bf u}e_j)(\pi {\bf u}),
\end{equation}
see, e.g., \cite[Equation~(2.2.3)]{Hsu02}. Combining \eqref{eq:hess-ident} with Theorem \ref{thm:wang}, we deduce that
\begin{equation}
\begin{split}
\HH_i\HH_j & {\li G}_t (\Phi_t)  \stackrel{\eqref{eq:hess-ident}}{=} \nabla^2 P_{T-t}f (\Phi_te_i, \Phi_te_j)(B_t)
\\ & \stackrel{\eqref{eq:wang}}{=} e^{-n\kappa(T-t)} P_{T-t}\nabla^2 f(\Phi_te_i, \Phi_te_j)(B_t) + \frac{1-e^{-n\kappa(T-t)}}{n} P_{T-t}\Delta f(B_t) \cdot \GG(\Phi_t e_i, \Phi_t e_j)
\\ & = e^{-n\kappa(T-t)} P_{T-t}\nabla^2 f(\Phi_te_i, \Phi_te_j)(B_t) + \frac{1-e^{-n\kappa(T-t)}}{n} P_{T-t}\Delta f(B_t) \cdot \delta_{ij},
\end{split}
\end{equation}
where in the last equality we used that $\{\Phi_te_1,\ldots,\Phi_te_n\}$ is an orthonormal basis of $T_{B_t}\MM$. Taking expectations on both sides, we get
\begin{equation}
\mb{E}\big[\HH_i\HH_j {\li G}_t (\Phi_t)  \big]  = e^{-n\kappa(T-t)} \mb{E}\big[ P_{T-t}\nabla^2 f(\Phi_te_i, \Phi_te_j)(B_t)  \big] + \frac{1-e^{-n\kappa(T-t)}}{n} \mb{E}\big[P_{T-t}\Delta f(B_t) \big]\cdot \delta_{ij}.
\end{equation}
By the definition \cite[Equation~(1.2)]{Wan19} of the action of $\{P_s\}_{s\geq0}$ on tensors, we have
\begin{equation*}
 \mb{E}\big[ P_{T-t}\nabla^2 f(\Phi_te_i, \Phi_te_j)(B_t)  \big]  =  \mb{E}\big[ \nabla^2 f(\Phi_Te_i, \Phi_Te_j)(B_T)  \big] = P_T \nabla^2 f(\Phi_0e_i, \Phi_0e_j)(x),
\end{equation*}
where the last identity follows from the definition of stochastic parallel transport given by $\{\Phi_s\circ\Phi_0^{-1}\}_{s\geq0}$ (see \cite[Section~2.3]{Hsu02}). Similarly, we have
\begin{equation}
\mb{E}\big[P_{T-t}\Delta f(B_t) \big] = \mb{E} \big[ \Delta f(B_T)\big] = P_T\Delta f(x)
\end{equation}
and combining everything we deduce that
\begin{equation} \label{eq:use-parallel-transport}
\mb{E}\big[\HH_i\HH_j {\li G}_t (\Phi_t)  \big]  = e^{-n\kappa(T-t)} P_T \nabla^2 f(\Phi_0e_i, \Phi_0e_j)(x) + \frac{1-e^{-n\kappa(T-t)}}{n} P_T\Delta f(x)\cdot \delta_{ij}.
\end{equation}
Plugging \eqref{eq:use-parallel-transport} and \eqref{eq:change-measure} in \eqref{eq:m00} completes the proof.
\end{proof}

We are now ready to derive the differential inequality for $v(t)$. For simplicity, we shall denote by $c_T \eqdef  P_T\Delta f(x)$ and by $J_T$ the symmetric matrix with
\begin{equation}
\label{eq:J_T}
(J_T)_{ij} \eqdef P_T\nabla^2 f(\Phi_0 e_i, \Phi_0 e_j)(x) - \frac{1}{n} P_T \Delta f(x)\cdot \delta_{ij},
\end{equation}
which satisfies $\mr{tr} J_T=0$. Combining all of the above, we get the following matrix inequality:

\begin{proposition} \label{prop:matrix-ineq}
Let $(\MM,\GG)$ be an $n$-dimensional Riemannian manifold with constant sectional curvature $\kappa \in\R$. For every $t\in(0,T)$, we have
\begin{align}
\label{eq:matrix-ineq_opt}
\begin{split}
\frac{\diff v(t)}{\diff t} \succeq v(t)^2- \Big( e^{-n\kappa(T-t)} J_T+\tfrac{c_T}{n}\cdot \msf{Id}_n  \Big) &v(t)  - v(t) \Big( e^{-n\kappa(T-t)} J_T+ \tfrac{c_T}{n}\cdot \msf{Id}_n  \Big)\\ 
& +  \Big( e^{-n\kappa(T-t)} J_T + \tfrac{c_T}{n}\cdot \msf{Id}_n \Big)^2 + (n-1)\kappa v(t),
\end{split}
\end{align}
so in particular,
\begin{equation}
\label{eq:matrix-ineq}
\frac{\diff v(t)}{\diff t} \succeq v(t)^2+ \Big( \big(\tfrac{(n-1)\kappa}{2}-\tfrac{c_T}{n}\big)\cdot \msf{Id}_n - e^{-n\kappa(T-t)} J_T \Big) v(t)  + v(t) \Big( \big(\tfrac{(n-1)\kappa}{2}-\tfrac{c_T}{n}\big)\cdot \msf{Id}_n - e^{-n\kappa(T-t)} J_T \Big),
\end{equation}
where $\succeq$ is the inequalities in the positive semidefinite ordering.
\end{proposition}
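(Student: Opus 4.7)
The plan is to combine Lemma \ref{lem:v(t)ODE} (which gives an exact ODE for $v(t)$ on Einstein manifolds) with a matrix Jensen inequality that lets us dominate the second-moment matrix $n(t)$ in terms of the first-moment matrix $m(t)$, and then substitute the explicit formula for $m(t)$ from \eqref{eq:from-v-to-m}. Since a manifold of constant sectional curvature $\kappa$ has constant Ricci curvature $\rho=(n-1)\kappa$, Lemma \ref{lem:v(t)ODE} applies and gives
\begin{equation*}
\frac{\diff v(t)}{\diff t} = n(t) + (n-1)\kappa\, v(t).
\end{equation*}

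Next, because $\msf{Q}(t)$ is symmetric (see \eqref{eq:Qdef}), the random matrix $\msf{Q}(t)-\mb{E}\msf{Q}(t)$ is symmetric too, so $\mb{E}\big[(\msf{Q}(t)-\mb{E}\msf{Q}(t))^2\big]\succeq 0$. Expanding this inequality yields the matrix Jensen inequality $n(t)=\mb{E}[\msf{Q}(t)^2]\succeq (\mb{E}\msf{Q}(t))^2 = m(t)^2$, recalling the definitions in \eqref{eq:mn} and that $\msf{P}(t)=\msf{Q}(t)^2$. Rewriting \eqref{eq:from-v-to-m} in terms of $J_T$ and $c_T=P_T\Delta f(x)$ (using the definition \eqref{eq:J_T} of $J_T$ and the identity $e^{-n\kappa(T-t)}(\frac{c_T}{n}\msf{Id}_n)+\frac{1-e^{-n\kappa(T-t)}}{n}c_T\msf{Id}_n=\frac{c_T}{n}\msf{Id}_n$), we get the compact identity
\begin{equation*}
m(t) = v(t) - e^{-n\kappa(T-t)} J_T - \tfrac{c_T}{n}\msf{Id}_n.
\end{equation*}

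Squaring this expression and plugging into the previous display gives
\begin{equation*}
\frac{\diff v(t)}{\diff t}\succeq v(t)^2 - \big(e^{-n\kappa(T-t)} J_T+\tfrac{c_T}{n}\msf{Id}_n\big)v(t) - v(t)\big(e^{-n\kappa(T-t)} J_T+\tfrac{c_T}{n}\msf{Id}_n\big) + \big(e^{-n\kappa(T-t)} J_T + \tfrac{c_T}{n}\msf{Id}_n\big)^2 + (n-1)\kappa\, v(t),
\end{equation*}
which is \eqref{eq:matrix-ineq_opt}. To obtain \eqref{eq:matrix-ineq}, note that $\frac{(n-1)\kappa}{2}\msf{Id}_n$ commutes with $v(t)$, so we may symmetrize the linear term by writing $(n-1)\kappa v(t)=\frac{(n-1)\kappa}{2}\msf{Id}_n \cdot v(t) + v(t)\cdot\frac{(n-1)\kappa}{2}\msf{Id}_n$, absorb it into the cross terms, and finally drop the positive semidefinite square $(e^{-n\kappa(T-t)} J_T+\tfrac{c_T}{n}\msf{Id}_n)^2\succeq 0$.

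The only substantive step is the matrix Jensen inequality; once $n(t)\succeq m(t)^2$ is in hand, the rest is algebraic manipulation of \eqref{eq:from-v-to-m} and \eqref{eq:for-einstein}. There is no difficulty expected beyond being careful that $v(t)$ need not commute with $J_T$, which is why the two cross terms $K(t)v(t)$ and $v(t)K(t)$ must both be kept (with $K(t)=e^{-n\kappa(T-t)}J_T+\tfrac{c_T}{n}\msf{Id}_n$) rather than collapsed into a single product.
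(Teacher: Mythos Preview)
Your proposal is correct and follows essentially the same approach as the paper: apply the matrix Jensen inequality $n(t)=\mb{E}[\msf{Q}(t)^2]\succeq m(t)^2$, combine it with the Einstein ODE \eqref{eq:for-einstein} for $\rho=(n-1)\kappa$, substitute the explicit expression \eqref{eq:from-v-to-m} for $m(t)$, and expand; then drop the positive semidefinite square to pass from \eqref{eq:matrix-ineq_opt} to \eqref{eq:matrix-ineq}. Your remarks on the non-commutativity of $v(t)$ and $J_T$ and on symmetrizing the $(n-1)\kappa v(t)$ term are accurate and slightly more explicit than the paper's terse ``expanding.''
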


\begin{proof}
Combining the matrix Jensen inequality
\begin{equation}
n(t) = \mb{E}\big[ Q(t)^2 \big]\succeq  \mb{E}\big[-Q(t)\big]^2 = m(t)^2
\end{equation}
with \eqref{eq:for-einstein}, \eqref{eq:from-v-to-m} and expanding, we get \eqref{eq:matrix-ineq_opt}
\begin{equation} 
\begin{split}
\frac{\diff v(t)}{\diff t} \succeq v(t)^2- \Big( e^{-n\kappa(T-t)} J_T + \tfrac{c_T}{n} \cdot \msf{Id}_n \Big)& v(t)  - v(t) \Big( e^{-n\kappa(T-t)} J_T + \tfrac{c_T}{n}\cdot \msf{Id}_n \Big) 
\\ & +  \Big( e^{-n\kappa(T-t)} J_T + \tfrac{c_T}{n}\cdot \msf{Id}_n \Big)^2 + (n-1)\kappa v(t).
\end{split}
\end{equation}
The inequality \eqref{eq:matrix-ineq}  follows since the squared matrix is positive semidefinite. 
\end{proof}

Proposition \ref{prop:matrix-ineq} allows us to deduce the following local intrinsic dimensional logarithmic Sobolev inequalities which are, however, non-explicit.

\begin{theorem}
\label{thm:manifolds_opt}
Let $(\MM,\GG)$ be an $n$-dimensional Riemannian manifold with constant sectional curvature $\kappa\in\R$. Fix $T>0$, $x\in\MM$, a smooth positive function $f:\MM\to\R$ with $\int_M f\,\diff P_T\delta_x=1$, and let $\mu$ be the probability measure with $\frac{\diff\mu}{\diff P_T\delta_x}=f$. Suppose there is a family of matrices $U(t)\in \mathbb M_n(\R)$ for $t\in [0,T]$ which solves the equation
\begin{align} \label{eq:matrix-ODE}
\begin{split}
\frac{\diff U(t)}{\diff t} = U(t)^2- \Big( e^{-n\kappa(T-t)} J_T+\tfrac{c_T}{n}\cdot \msf{Id}_n  \Big) &U(t)  - U(t)\Big( e^{-n\kappa(T-t)} J_T+\tfrac{c_T}{n}\cdot \msf{Id}_n  \Big)\\ 
& +  \Big( e^{-n\kappa(T-t)} J_T + \tfrac{c_T}{n}\cdot \msf{Id}_n \Big)^2 + (n-1)\kappa U(t),
\end{split}
\end{align}
with either initial condition $U(0):=v(0)$ or $U(T):=v(T)$. Then, we have the local intrinsic dimensional logarithmic Sobolev inequality
\begin{equation} \label{eq:manifolds-lsi_opt}
\HH(\mu \| P_T\delta_x) \leq \ \frac{1}{2}\int_0^T \mr{tr}\big[U(t)\big] \,\diff t, \quad U(T)=\mb{E} \big( \nabla^{\mr{hor}} \log f(\Psi_T)\big)^{\otimes 2}.
\end{equation}
and the reverse local intrinsic dimensional logarithmic Sobolev inequality

\begin{equation} \label{eq:manifolds-rev-lsi_opt}
\HH(\mu \| P_T\delta_x) \geq \frac{1}{2}\int_0^T \mr{tr}\big[U(t)\big] \,\diff t, \quad U(0)=\big( \nabla^{\mr{hor}}\log P_Tf(x)\big)^{\otimes2}.
\end{equation}
\end{theorem}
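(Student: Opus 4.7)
The plan is to reduce both inequalities to a Riccati-type matrix comparison principle between the symmetric matrix $v(t)$ defined in \eqref{eq:vij} and the hypothesized solution $U(t)$ of the matrix ODE \eqref{eq:matrix-ODE}. By Lehec's formula \eqref{eq:lehec-formula} combined with the definition of $v(t)$,
\begin{equation*}
\HH(\mu\|P_T\delta_x) = \frac{1}{2}\int_0^T \mr{tr}\bigl[v(t)\bigr]\,\diff t,
\end{equation*}
so it suffices to prove $v(t)\preceq U(t)$ in Loewner order for the upper bound \eqref{eq:manifolds-lsi_opt} and $v(t)\succeq U(t)$ for the lower bound \eqref{eq:manifolds-rev-lsi_opt}, pointwise in $t\in[0,T]$.

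Write Proposition \ref{prop:matrix-ineq} in the compact form $\dot{v}(t)\succeq F(v(t),t)$, where $F(X,t):=(X-A(t))^2 + (n-1)\kappa X$ and $A(t):=e^{-n\kappa(T-t)}J_T + \tfrac{c_T}{n}\msf{Id}_n$; the ODE \eqref{eq:matrix-ODE} reads $\dot{U}(t)=F(U(t),t)$. For $W(t):=U(t)-v(t)$, the elementary identity $X^2-Y^2 = \tfrac{1}{2}\bigl((X+Y)(X-Y)+(X-Y)(X+Y)\bigr)$, valid for symmetric matrices $X,Y$, applied to $X=U-A$, $Y=v-A$ yields
\begin{equation*}
\dot{W}(t) - C(t)W(t) - W(t)C(t) - (n-1)\kappa W(t) \preceq 0,
\end{equation*}
where $C(t):=\tfrac{1}{2}(U(t)+v(t))-A(t)$ is symmetric. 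This is a linear matrix Lyapunov differential inequality for $W$.

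The key step is then a matrix Duhamel argument. Let $\Phi(t,s)$ be the fundamental matrix of $\partial_t\Phi = \bigl(C(t)+\tfrac{(n-1)\kappa}{2}\msf{Id}_n\bigr)\Phi$ with $\Phi(s,s)=\msf{Id}_n$; symmetry of $C$ makes $\Phi(t,s)W(s)\Phi(t,s)^\top$ solve the associated homogeneous linear Lyapunov equation, so by variation of constants
\begin{equation*}
W(t) = \Phi(t,s)W(s)\Phi(t,s)^\top + \int_s^t \Phi(t,r)\,G(r)\,\Phi(t,r)^\top\,\diff r
\end{equation*}
for a source $G(r)\preceq 0$. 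For the upper bound, take $s=T$: the terminal condition $W(T)=0$ holds since $F_T=\log f$ gives $v(T)=\mb{E}[(\nabla^{\mr{hor}}\log{\li f}(\Psi_T))^{\otimes 2}]$, which matches $U(T)$; because congruence transformations preserve the positive semidefinite cone, the reversed integral $\int_T^t(\cdot)\,\diff r$ produces $W(t)\succeq 0$ for $t\le T$. For the lower bound, take $s=0$: since $\Psi_0$ is deterministic and $F_0=\log P_Tf$, we have $v(0)=(\nabla^{\mr{hor}}\log P_Tf(x))^{\otimes 2}=U(0)$, so $W(0)=0$, and the same congruence argument forces $W(t)\preceq 0$ for $t\ge 0$. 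Taking traces and integrating over $[0,T]$ delivers \eqref{eq:manifolds-lsi_opt} and \eqref{eq:manifolds-rev-lsi_opt}.

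The main obstacle I expect is not the comparison itself but the implicit requirement that $U(t)$ exist on the entire interval $[0,T]$: matrix Riccati equations may blow up in finite time, which is precisely why the theorem is phrased conditionally on the existence of such $U$. In the applications one circumvents this by producing a closed-form ansatz (such as $C(t)=\tfrac{e^{n\kappa t}}{n\kappa}A+tB$ in Theorem \ref{thm:manifolds_sec}) that linearizes \eqref{eq:matrix-ODE} through a matrix-valued integrating factor; the secondary calculation of verifying that the integrated trace of this explicit $U$ reproduces the formulas \eqref{eq:manifolds-lsi_sec}--\eqref{eq:manifolds-rev-lsi_sec} is routine but bookkeeping-intensive.
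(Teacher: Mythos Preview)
Your proof is correct and follows essentially the same approach as the paper: both reduce to Lehec's formula $\HH(\mu\|P_T\delta_x)=\tfrac{1}{2}\int_0^T\mr{tr}[v(t)]\,\diff t$ and then compare $v(t)$ with $U(t)$ via a matrix Riccati comparison principle applied to Proposition~\ref{prop:matrix-ineq}. The only difference is that the paper cites a standard reference (Jones, 1975) for this comparison, whereas you spell it out explicitly through the linearization $W=U-v$ and the Duhamel/variation-of-constants representation; your remarks about the boundary values $v(0)$, $v(T)$ and about the conditional existence of $U$ on $[0,T]$ are also exactly what is needed.
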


\begin{proof}
Lehec's formula \eqref{eq:lehec-formula} implies
\begin{equation}  \label{eq:use-lehec_opt}
\HH(\mu \| P_T\delta_x)  \stackrel{\eqref{eq:lehec-formula}}{=} \frac{1}{2}\mb{E} \left[ \int_0^T \big| \nabla F_t(X_t)\big|^2\,\diff t \right] = \frac{1}{2} \sum_{i=1}^n \int_0^T \mb{E}\big[ \HH_i {\li F}_t(\Psi_t)^2\big]\,\diff t = \frac{1}{2}\int_0^T \mr{tr}\big[ v(t)\big] \,\diff t.
\end{equation}
For the reverse local intrinsic dimensional logarithmic Sobolev inequality, we note that $U(0)=v(0)$ so the result follows by \eqref{eq:matrix-ineq_opt} and standard comparison principles for matrix Ricatti equations, see \cite{Jon75}. For the local intrinsic dimensional logarithmic Sobolev inequality, we have $U(T)=v(T)$ and the conclusion follows by reversing time.
\end{proof}

Theorem \ref{thm:manifolds_opt} provides sharp results which are, however, not explicit since the solutions of \eqref{eq:matrix-ODE} are complicated. They are expressed in terms of special functions, except in the flat space case where they simplify considerably-- see Section \ref{subsec:intrinsic_LIS_flat}. To avoid the complication of Theorem \ref{thm:manifolds_opt} we will use \eqref{eq:matrix-ineq}, rather than the stronger inequality \eqref{eq:matrix-ineq_opt}, which will lead to explicit bounds, namely Theorem \ref{thm:manifolds_sec}. To this end, we shall need the following technical lemma on matrix Bernoulli differential inequalities.

\begin{lemma} \label{lem:bernoulli}
Fix $T>\e>0$, $n\in\N$, $\gamma\in\R\setminus\{0\}$ and let $A,B\in \mathbb M_n(\R)$ be symmetric matrices with $AB=BA$. Consider $C(t) \eqdef  \tfrac{e^{\gamma t}}{\gamma}A+tB$, where $t\in\R$. For any positive definite matrix $V_\e\in\mathbb M_n(\R)$, if a continuous function $V:[\e,T]\to\mathbb M_n(\R)$ for which every $V(t)$ is a positive semi-definite matrix satisfies the ordinary differential inequality
\begin{equation} \label{eq:ODE}
\forall \ t\in(\e,T),\qquad \frac{\diff V(t)}{\diff t} \succeq V(t)^2 + \big(e^{\gamma t}A+B\big) V(t) + V(t) \big( e^{\gamma t}A+B\big)
\end{equation}
with boundary condition $V(\e)=V_\e$, then it also satisfies the matrix inequalities
\begin{equation} \label{eq:comp-with-T}
\forall \ t\in[\e,T], \qquad V(t) \preceq e^{C(t)-C(T)} \Big( \msf{Id} _n+ V(T) \int_t^{T} e^{2C(s)-2C(T)}\,\diff s \Big)^{-1} V(T) e^{C(t)-C(T)}.
\end{equation}
and
\begin{equation} \label{eq:comp-with-0}
\forall \ t\in[\e,T],\qquad V(t) \succeq e^{C(t)-C(\e)} \Big( \msf{Id}_n - V(\e) \int_\e^t e^{2C(s)-2C(\e)}\,\diff s\Big)^{-1} V(\e) e^{C(t)-C(\e)}.
\end{equation}
Moreover, the right-hand side of \eqref{eq:comp-with-0} is positive definite for every $t\in(\e,T)$.
\end{lemma}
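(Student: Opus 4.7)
The plan is to reduce this matrix Bernoulli-type inequality to a linear differential inequality via a conjugation-and-inversion trick, exploiting crucially that $A$ and $B$ commute so that $C(s)$, $C'(t)=e^{\gamma t}A+B$, and every exponential $e^{\pm C(s)}$ all lie in a single commutative subalgebra. First I would set $W(t)\eqdef e^{-C(t)}V(t)e^{-C(t)}$. Since $C(t)$ is symmetric and commutes with $C'(t)$, a direct differentiation using $\frac{\diff}{\diff t}e^{-C(t)}=-C'(t)e^{-C(t)}=-e^{-C(t)}C'(t)$ gives
\begin{equation*}
W'(t)=e^{-C(t)}\bigl[V'(t)-C'(t)V(t)-V(t)C'(t)\bigr]e^{-C(t)}.
\end{equation*}
Substituting the hypothesis \eqref{eq:ODE} and using $V=e^{C}We^{C}$ (so that $V^2=e^{C}We^{2C}We^{C}$), the linear terms cancel and what remains is the pure quadratic inequality
\begin{equation*}
W'(t)\succeq W(t)\,e^{2C(t)}\,W(t).
\end{equation*}

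Next, assuming momentarily that $V(t)$ (equivalently $W(t)$) stays positive definite on $[\e,T]$, I would pass to $Z(t)\eqdef W(t)^{-1}$. The identity $Z'(t)=-Z(t)W'(t)Z(t)$ converts the preceding quadratic bound into the fully linear inequality
\begin{equation*}
Z'(t)\preceq -e^{2C(t)},
\end{equation*}
which can be integrated termwise. Integrating from $\e$ to $t$ and then inverting (valid exactly as long as the right-hand side remains positive definite) yields the desired lower bound on $W(t)$; integrating from $t$ to $T$ and inverting yields the upper bound, the latter inversion being automatic since $W(T)^{-1}+\int_t^T e^{2C(s)}\,\diff s$ is positive definite. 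Conjugating back by $V=e^{C}We^{C}$, pulling factors $e^{\pm C(\e)}$ and $e^{\pm C(T)}$ inside the integrals (permitted by commutativity), and applying the algebraic identity $[V(a)^{-1}-K]^{-1}=(\msf{Id}_n-V(a)K)^{-1}V(a)$ with $a\in\{\e,T\}$ produces exactly \eqref{eq:comp-with-T} and \eqref{eq:comp-with-0}. The positive-definiteness assertion in the final sentence falls out of the same chain: $Z(t)\succ0$ together with the integrated inequality forces $Z(\e)-\int_\e^t e^{2C(s)}\,\diff s\succ0$, which transcribes directly to positive-definiteness of $\msf{Id}_n-V(\e)\int_\e^t e^{2(C(s)-C(\e))}\,\diff s$.

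The main obstacle is justifying the inversion step $Z=W^{-1}$, since the hypothesis only forces $V(t)$ to be positive semidefinite and in principle allows degeneration in the interior of $[\e,T]$. I would circumvent this via the standard matrix Riccati comparison argument of \cite{Jon75}: the right-hand sides of \eqref{eq:comp-with-T} and \eqref{eq:comp-with-0} are themselves explicit solutions of the equality version of \eqref{eq:ODE} (the matrix Riccati equation $U'=U^2+C'U+UC'$), as can be verified by running the above conjugation-and-inversion in reverse on those closed-form expressions with boundary data $V(T)$ and $V(\e)$ respectively. A standard comparison principle for matrix Riccati equations then compares $V(t)$ to these reference solutions directly, delivering both inequalities without requiring any a priori positive-definiteness of $V$ along the interior of $[\e,T]$.
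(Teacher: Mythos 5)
Your core calculation is essentially the paper's: setting $W=e^{-C}Ve^{-C}$ and then $Z=W^{-1}$ and arriving at $Z'\preceq -e^{2C}$ is algebraically identical to the paper's step of multiplying \eqref{eq:ODE} by $V^{-1}$ on both sides and differentiating $e^{C}V^{-1}e^{C}$ (indeed $Z=e^{C}V^{-1}e^{C}$). The genuine difference is in how the two arguments handle the invertibility of $V(t)$ on $(\e,T)$. The paper uses a short continuation argument: let $t_{\max}$ be the supremum of $t$ with $V(t)\succ 0$; the integrated inequality $V(t)^{-1}\preceq e^{-C(t)}\bigl(e^{C(\e)}V_\e^{-1}e^{C(\e)}-\int_\e^t e^{2C(s)}\,\diff s\bigr)e^{-C(t)}$ bounds $V(t)^{-1}$ from above for $t<t_{\max}$ by something continuous on $[\e,T]$, so by continuity $V(t_{\max})$ remains nondegenerate, forcing $t_{\max}=T$. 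Your alternative (invoke the Riccati comparison from \cite{Jon75} against the explicit solutions) is a legitimate route but as written has a gap: for \eqref{eq:comp-with-0} to hold on all of $[\e,T]$ you must show that the reference solution does not blow up before $T$ (equivalently, that $\msf{Id}_n-V(\e)\int_\e^t e^{2(C(s)-C(\e))}\diff s$ stays positive definite up to $T$). The comparison principle only yields $V\succeq U$ while both exist; one still has to argue that a finite-time blowup of $U$ is impossible because $V$ is continuous on $[\e,T]$ and $V\succeq U\succ 0$. This step is doable but you don't make it, and it is precisely the role played by the paper's maximality argument. Your final remark also has a subtle circularity: you deduce positive definiteness of $\msf{Id}_n-V(\e)\int_\e^t e^{2(C(s)-C(\e))}\diff s$ from $Z(t)\succ 0$, i.e.\ from $V(t)\succ 0$, which is the fact one is trying to establish. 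The paper's ordering of the argument (first prove $t_{\max}=T$ directly via the integrated bound, then read off positive definiteness of the right-hand side) avoids this.
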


\begin{proof}
Since $A$ and $B$ commute,  we have
\begin{equation} \label{eq:der-expo}
\frac{\diff}{\diff t}e^{C(t)} = \big( e^{\gamma t}A+B\big)e^{C(t)} = e^{C(t)}\big( e^{\gamma t}A+B\big).
\end{equation}
As $V_\e$ is positive definite, the same holds for $V(t)$ for $t$ near $\e$ so let $t_{\max}\in [\e,T]$ be the supremum over $t\in [\e,T]$ where $V(t)$ is positive definite. For $t\in (\e,t_{\max})$, multiplying \eqref{eq:ODE} by $V(t)^{-1}$ on both sides, we deduce that
\begin{equation} \label{eq:ODE-for-V-1}
\frac{\diff V(t)^{-1}}{\diff t} \preceq -\msf{Id}_n - V(t)^{-1} C'(t) - C'(t) V(t)^{-1},
\end{equation}
where $C'(t) \eqdef \frac{\diff C(t)}{\diff t}$. Therefore, we have
\begin{equation} \label{eq:neater-odi}
\frac{\diff }{\diff t} \big[ e^{C(t)} V(t)^{-1} e^{C(t)} \big] \stackrel{\eqref{eq:der-expo}}{=} e^{C(t)} \big( C'(t) V(t)^{-1} + \frac{\diff V(t)^{-1}}{\diff t} + V(t)^{-1} C'(t) \Big) e^{C(t)} \stackrel{\eqref{eq:ODE-for-V-1}}{\preceq} -e^{2C(t)},
\end{equation}
where in the last inequality we used that $C(t)$ is symmetric.  Integrating from $\e$ to $t$, we get
\begin{equation}
e^{C(t)}V(t)^{-1}e^{C(t)} - e^{C(\e)}V(\e)^{-1}e^{C(\e)} \preceq - \int_\e^t e^{2C(s)}\,\diff s
\end{equation}
which can be rearranged to give, for every $t\in [\e,t_{\max})$,
\begin{equation} \label{eq:upper-for-V-1}
V(t)^{-1} \preceq e^{-C(t)} \Big( e^{C(\e)} V(\e)^{-1} e^{C(\e)} - \int_\e^t e^{2C(s)}\,\diff s\Big) e^{-C(t)}.
\end{equation}
Since the right-hand side of \eqref{eq:upper-for-V-1} is finite for every $t\in [\e,T]$, we can take the limit $t\uparrow t_{\max}$ to conclude that $V(t_{\max})$ is positive definite, and hence $t_{\max}=T$. Since the function $A \mapsto A^{-1}$ is operator decreasing on positive definite matrices, this proves \eqref{eq:comp-with-0} after some simple algebraic manipulations.  Moreover, as a consequence of \eqref{eq:upper-for-V-1},  the right-hand side of \eqref{eq:comp-with-0} is indeed positive definite. Similarly,   integrating \eqref{eq:neater-odi} from $t$ to $T$ and rearranging gives
\begin{equation} \label{eq:gives-pd}
\begin{split}
V(t)^{-1} & \succeq e^{-C(t)} \Big( e^{C(T)} V(T)^{-1} e^{C(T)} + \int_t^{T} e^{2C(s)} \,\diff s\Big) e^{-C(t)}.
\end{split}
\end{equation}
However, since $V(t)^{-1}$ is positive definite for every $t\in [\e,T]$ this is equivalent to
\begin{equation}
V(t) \preceq e^{C(t)-C(T)} \Big( \msf{Id}_n + V(T) \int_t^{T} e^{2C(s)-2C(T)}\,\diff s \Big)^{-1} V(T) e^{C(t)-C(T)},
\end{equation} 
which concludes the proof of \eqref{eq:comp-with-T}.
\end{proof}

\begin{proof} [Proof of Theorem \ref{thm:manifolds_sec}]
Fix $T>0$,  $\e>0$, and $x\in\MM$. Let $f:\MM\to\R$ be a smooth positive function with $\int_\MM f\,\diff P_T\delta_x = 1$ and let $\mu$ be the probability measure on $\MM$ with $\tfrac{\diff\mu}{\diff P_T\delta_x}=f$.  Without loss of generality, we can perturb $f$ and assume that 
\begin{equation}
v_\e \eqdef \mb{E}\big[ \big( \nabla^{\mr{hor}} \log  P_{T-\e}{\li f}(\Psi_\e)\big)^{\otimes 2}\big] 
\end{equation} 
is a positive definite matrix. Following the terminology above, Lehec's formula \eqref{eq:lehec-formula} implies
\begin{equation}  \label{eq:use-lehec}
\HH(\mu \| P_T\delta_x)  \stackrel{\eqref{eq:lehec-formula}}{=} \frac{1}{2}\mb{E} \left[ \int_0^T \big| \nabla F_t(X_t)\big|^2\,\diff t \right] = \frac{1}{2} \sum_{i=1}^n \int_0^T \mb{E}\big[ \HH_i {\li F}_t(\Psi_t)^2\big]\,\diff t = \frac{1}{2}\int_0^T \mr{tr}\big[ v(t)\big] \,\diff t.
\end{equation}
Since $v(\e) = v_\e$ is a positive definite matrix, Proposition \ref{prop:matrix-ineq} and Lemma \ref{lem:bernoulli} give
\begin{equation} \label{eq:from-odi-22}
\forall \ t\in[\e,T],\qquad v(t) \preceq e^{C(t)-C(T)} \Big( \msf{Id}_n + v_T \int_t^{T} e^{2C(s)-2C(T)}\,\diff s \Big)^{-1} v_T e^{C(t)-C(T)}
\end{equation}
where $C(t) = \frac{e^{\gamma t}}{\gamma} A + tB$, for the matrices
\begin{equation}
\begin{cases}
A =- e^{-n\kappa T} J_T \\ B = \big(\tfrac{(n-1)\kappa}{2} - \tfrac{c_T}{n}\big) \cdot \msf{Id}_n. \\ \gamma = n\kappa
\end{cases}
\end{equation}
By the perturbation above, we have thus established the validity of \eqref{eq:from-odi-22} for an arbitrary smooth positive density $f$ and for any $\e>0$.  Since $v_T = \mb{E}_\mu \big( \nabla \log  f\big)^{\otimes 2}$, the logarithmic Sobolev inequality of Theorem \ref{thm:manifolds_sec} follows by combining \eqref{eq:use-lehec} and \eqref{eq:from-odi-22} with $\e\to0^+$. The reverse logarithmic Sobolev inequality follows by using \eqref{eq:comp-with-0} since $v_0 = \big(\nabla \log P_Tf(x)\big)^{\otimes 2}$.
\end{proof}

\subsection{Intrinsic dimensional local logarithmic Sobolev inequalities in flat spaces}
\label{subsec:intrinsic_LIS_flat}
Our next goal is to prove the intrinsic dimensional local logarithmic Sobolev inequalities in flat spaces, i.e., equations \eqref{eq:intrinisc_lsi_dim} and \eqref{eq:intrinisc_lsi_dim_reverse}. In contrast to the proof of Theorem \ref{thm:manifolds_sec}, which uses the weaker inequality \eqref{eq:matrix-ineq}, here we will use the stronger inequality \eqref{eq:matrix-ineq_opt} which in flat space has an explicit clear solution. 
\begin{theorem}
\label{thm:flat}
Fix $T>0$ and $x\in\R^n$. Let $f:\R^n\to\R$ be a smooth positive function with $\int_{\R^n} f\,\diff P_T\delta_x = 1$ and let $\mu$ be the probability measure on $\R^n$ with $\tfrac{\diff\mu}{\diff P_T\delta_x}=f$. Then, we have the local intrinsic dimensional logarithmic Sobolev inequality
\begin{align}
\label{eq:intrinisc_lsi_dim_thm}
\HH(\mu \| P_T\delta_x) \leq \frac{T}{2} \Delta P_Tf (x)+\frac{1}{2}P_Tf(x)\log\det\left(\msf{Id}_n -T\frac{P_T(f\nabla^2\log f(x))}{P_Tf(x)}\right),
\end{align}
and the reverse local intrinsic dimensional logarithmic Sobolev inequality
\begin{align}
\label{eq:intrinisc_lsi_dim_reverse_thm}
\begin{split}
\HH(\mu \| P_T\delta_x) &\geq \frac{T}{2}\Delta P_Tf(x) -\frac{1}{2}P_Tf(x)\log\det\left(\msf{Id}_n+T\nabla^2\log P_Tf(x)\right).
\end{split}
\end{align}
\end{theorem}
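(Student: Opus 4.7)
The plan is to invoke Theorem \ref{thm:manifolds_opt} in the flat case $\kappa=0$, where the matrix Riccati ODE \eqref{eq:matrix-ODE} simplifies dramatically and admits an explicit solution. Normalizing so that $P_Tf(x)=1$ and setting $M \eqdef J_T + \tfrac{c_T}{n}\msf{Id}_n$, which coincides with $P_T\nabla^2 f(x) = \nabla^2 P_T f(x)$ in flat space by definition \eqref{eq:J_T} and the fact that $\nabla^2$ commutes with $P_t$, the ODE \eqref{eq:matrix-ODE} collapses to
\begin{equation*}
\frac{\diff U(t)}{\diff t} = \bigl( U(t) - M\bigr)^2.
\end{equation*}
Letting $W(t) \eqdef U(t) - M$, this is a matrix Riccati equation $W'(t) = W(t)^2$, equivalent to $\tfrac{\diff}{\diff t} W(t)^{-1} = -\msf{Id}_n$, whose solutions parametrized by either endpoint read
\begin{equation*}
W(t) = \bigl( W(T)^{-1} + (T-t)\msf{Id}_n\bigr)^{-1} = \bigl( W(0)^{-1} - t\msf{Id}_n\bigr)^{-1}.
\end{equation*}

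The next step is to integrate $\mr{tr}\, U(t)$ explicitly. Since $W(t)$ is symmetric and the spectral decomposition of $W$ at either endpoint simultaneously diagonalizes $W(t)$ for all $t$, a one-line computation with eigenvalues yields
\begin{equation*}
\int_0^T \mr{tr}\, W(t)\,\diff t = \log\det\bigl(\msf{Id}_n + T W(T)\bigr) = -\log\det\bigl(\msf{Id}_n - T W(0)\bigr),
\end{equation*}
depending on the chosen boundary condition. Combined with $\int_0^T \mr{tr}\, M\,\diff t = T\Delta P_Tf(x)$ and the inequalities of Theorem \ref{thm:manifolds_opt}, this produces
\begin{equation*}
\HH(\mu \| P_T\delta_x) \leq \tfrac{T}{2}\Delta P_Tf(x) + \tfrac{1}{2}\log\det\bigl( \msf{Id}_n + T W(T)\bigr),
\end{equation*}
together with the analogous reverse estimate in terms of $W(0)$.

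What remains is to translate $W(T)$ and $W(0)$ into the intrinsic quantities appearing in \eqref{eq:intrinisc_lsi_dim_thm} and \eqref{eq:intrinisc_lsi_dim_reverse_thm}. For the forward inequality, $W(T) = v(T) - M$ with $v(T) = \mb{E}_\mu (\nabla\log f)^{\otimes 2} = P_T\bigl(f(\nabla\log f)^{\otimes 2}\bigr)(x)$; the pointwise identity $f\nabla^2\log f = \nabla^2 f - f^{-1}(\nabla f)^{\otimes 2}$ together with linearity of $P_T$ yields $W(T) = -P_T(f\nabla^2\log f)(x)$, producing \eqref{eq:intrinisc_lsi_dim_thm}. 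For the reverse inequality, $W(0) = (\nabla\log P_Tf(x))^{\otimes 2} - \nabla^2 P_Tf(x)$, and the identity $\nabla^2\log P_Tf = (P_Tf)^{-1}\nabla^2 P_Tf - (\nabla\log P_Tf)^{\otimes 2}$ combined with the normalization $P_Tf(x)=1$ gives $W(0) = -\nabla^2 \log P_Tf(x)$, which delivers \eqref{eq:intrinisc_lsi_dim_reverse_thm}.

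The principal delicate point is that the reverse boundary datum $v(0) = (\nabla\log P_Tf(x))^{\otimes 2}$ is rank one, so $W(0)$ is a priori degenerate and the Riccati flow might blow up before time $T$. This is handled exactly as in the proof of Theorem \ref{thm:manifolds_sec}: a small perturbation of $f$ makes the boundary data strictly positive definite, the explicit formulas are applied in that regime, and a passage to the limit recovers the general case. The resulting right-hand side in \eqref{eq:intrinisc_lsi_dim_reverse_thm} is consistent with Hamilton's inequality \eqref{eq:Hamilton}, which ensures that $\msf{Id}_n + T\nabla^2\log P_Tf(x) \succeq 0$ and hence the $\log\det$ is well defined.
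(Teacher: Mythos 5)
Your proof is correct and reproduces the paper's argument for the reverse inequality \eqref{eq:intrinisc_lsi_dim_reverse_thm} essentially verbatim (explicit solution of the flat Riccati ODE with initial datum $W(0)=-\nabla^2\log P_Tf(x)$, integration of the trace, Hamilton's inequality to justify the sign of the $\log\det$). The one genuine divergence is in the forward inequality \eqref{eq:intrinisc_lsi_dim_thm}. The paper obtains this in one line by substituting $\frac{\diff\mu}{\diff\lambda_n} = \frac{fP_T\delta_x}{P_Tf(x)}$ into Dembo's inequality \eqref{eq:lsi**} and integrating by parts; you instead run the Riccati machinery of Theorem \ref{thm:manifolds_opt} backwards from the terminal datum $U(T)=v(T)$ and identify $W(T)=-P_T(f\nabla^2\log f)(x)$ via the pointwise identity $f\nabla^2\log f = \nabla^2 f - f(\nabla\log f)^{\otimes 2}$. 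Both routes yield the same bound. Your approach is more uniform --- both directions come from a single stochastic/ODE mechanism, in the spirit of the rest of Part 2 --- but it is also longer and imports the (mild, but not free) well-posedness issue for the backward Riccati flow, which the paper's shortcut via Dembo sidesteps entirely. It would have been worth noting explicitly that the backward flow cannot blow up on $[0,T]$ because the comparison $v(t)\preceq U(t)$ together with $v(t)\succeq 0$ rules it out, just as positivity of $\III(\mu\|\lambda_n)$ guarantees positivity of $\msf{Id}_n - T P_T(f\nabla^2\log f)(x)/P_Tf(x)$ in the Dembo route.
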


\begin{proof}
The inequality \eqref{eq:intrinisc_lsi_dim_thm} follows by setting $\frac{\diff \mu}{\diff \lambda_n}\eqdef \frac{fP_T\delta_x}{P_Tf(x)}$ in \eqref{eq:lsi**}. To prove \eqref{eq:intrinisc_lsi_dim_reverse_thm}, we may assume without loss of generality assume that $-\nabla^2\log P_Tf(x)$ is invertible. Set $U(0)\eqdef v(0)=(\nabla\log P_Tf(x))^{\otimes 2}$ and use  the normalization assumption $\int_{\R^n} f\,\diff P_T\delta_x =P_Tf(x)=1$ to conclude that 
\begin{equation}
U(0)-\nabla^2P_Tf(x)  = (\nabla\log P_Tf(x))^{\otimes 2} - \frac{\nabla^2P_Tf(x)}{P_Tf(x)} = -\nabla^2\log P_Tf(x)
\end{equation}
is invertible. In flat space, using $\kappa=0$ and $J_T+\tfrac{c_T}{n}\cdot \msf{Id}_n= P_T\nabla^2 f(x)$, equation \eqref{eq:matrix-ODE} becomes
\begin{align} \label{eq:matrix-ODE_flat}
\frac{\diff U(t)}{\diff t} = U(t)^2-P_T\nabla^2 f(x)U(t)  - U(t) P_T\nabla^2 f(x) +  (P_T\nabla^2 f(x))^2.
\end{align}
The solution of \eqref{eq:matrix-ODE_flat} can be verified to be 
\begin{align} \label{eq:matrix-ODE_flat_sol}
\forall \ t\in(0,T), \qquad U(t)= \big([U(0)-P_T\nabla^2 f(x)]^{-1}-t \big)^{-1}+P_T\nabla^2 f(x),
\end{align}
where we used Hamilton's matrix inequality \eqref{eq:Hamilton} (see also Theorem \ref{thm:Hamilton} below) to justify the invertibility of  $[U(0)-P_T\nabla^2 f(x)]^{-1}-t$. Applying \eqref{eq:manifolds-rev-lsi_opt} of Theorem \ref{thm:manifolds_opt}  yields
\begin{align}\label{eq:lower_bound}
\begin{split}
\HH(\mu \| P_T\delta_x) &\geq -\frac{1}{2}\int_0^T \mr{tr}\big[\big((\nabla^2\log P_Tf(x))^{-1}+t \big)^{-1}\big]\,\diff t+\frac{T}{2}P_T\Delta f(x),
\end{split}
\end{align}
where again we used normalization assumption  $P_Tf(x)=1$. To rewrite the right-hand side of \eqref{eq:lower_bound} let $\{\lambda_i\}_{i=1}^n$ stand for the eigenvalues of $\nabla^2\log P_Tf(x)$ so
\begin{align}\label{eq:trace_term}
\begin{split}
\int_0^T \mr{tr}\big[\big((\nabla^2\log P_Tf(x))^{-1}+t \big)^{-1}\big]\,\diff t&=\sum_{i=1}^n\int_0^T (\lambda_i^{-1}+t)^{-1} \,\diff t=\sum_{i=1}^n\log \left(\frac{\lambda_i^{-1}+T}{\lambda_i^{-1}}\right) \\
&=\log\det\left(\msf{Id}_n+T\nabla^2\log P_Tf(x)\right).
\end{split}
\end{align}
It follows that \eqref{eq:lower_bound} reads
\begin{align}\label{eq:lower_bound_explicit}
\HH(\mu \| P_T\delta_x) &\geq \frac{T}{2}\Delta P_Tf(x) -\frac{1}{2}\log\det\left(\msf{Id}_n+T\nabla^2\log P_Tf(x)\right),
\end{align}
so using again $P_Tf(x)=1$, \eqref{eq:lower_bound_explicit} is equivalent to
\begin{align*}
\HH(\mu \| P_T\delta_x) &\geq \frac{T}{2}\Delta P_Tf(x) -\frac{1}{2}P_Tf(x)\log\det\left(\msf{Id}_n+T\nabla^2\log P_Tf(x)\right)\qedhere
\end{align*}
\end{proof}

\noindent {\bf Semigroup vs. stochastic interpolation.} The idea of writing the relative entropy as an integral  of a \emph{gradient term} goes back to the beginning of the Bakry--\'Emery theory of functional inequalities (see \cite{BE85,Bak94} or \cite[Section~5.5]{BGL14}). Such gradient terms often satisfy differential inequalities \`a la Proposition \ref{prop:matrix-ineq} which allow for the use of comparison principles in the spirit of Lemma \ref{lem:bernoulli}. For instance, Lehec in \cite{Leh17} considered the scalar-valued function $\alpha:[0,T]\to \R$ given by
\begin{equation}
\forall \ t\in[0,T],\qquad \alpha(t) = \mb{E}\big[ |\nabla F_t(X_t)|^2 \big]
\end{equation}
and showed that
\begin{equation} \label{eq:lehec-odi}
\forall \ t\in(0,T), \qquad \alpha'(t) \geq \frac{1}{n} \big( \alpha(t)-c_T\big)^2+(n-1)\kappa \alpha(t) \geq \frac{\alpha(t)}{n}\big( \alpha(t) + n(n-1)\kappa - 2c_T\big).
\end{equation}
Applying a standard comparison principle to the latter inequality, he then derived a dimensional upper bound for the relative entropy, see \cite[Equation~(25)]{Leh17}. It is worth pointing out that \eqref{eq:lehec-odi} is also a consequence of \eqref{eq:matrix-ineq_opt} after taking traces and using the elementary inequality $\mr{tr}C^2 \geq \tfrac{(\mr{tr}C)^2}{n}$ which holds for all $n\times n$ positive semidefinite matrices $C$. A close inspection of the arguments of this section reveals that the logarithmic Sobolev inequality of Theorem \ref{thm:manifolds_sec} is a strengthening of Lehec's result for manifolds of constant sectional curvature.

By reasoning similar to \eqref{eq:change-measure}, for every $i\in\{1,\ldots,n\}$, we have
\begin{equation}
\mb{E}\big[ (\HH_i {\li F_t}(\Psi_t))^2 \big] = \mb{E} \Big[ \frac{(\HH_i {\li G}_t(\Psi_t))^2}{{\li G}_t(\Psi_t)^2}\Big] = \mb{E}\Big[ \frac{(\HH_i {\li G}_t(\Phi_t))^2}{{(\li G}_t(\Phi_t))^2} f(B_T)\Big] = \mb{E} \Big[ \frac{(\HH_i {\li G}_t(\Phi_t))^2}{{\li G}_t(\Phi_t)}\Big]
\end{equation}
and thus
\begin{equation}
\forall \ t\in(0,T),\qquad \alpha(t) = P_t\left[ \frac{|\nabla P_{T-t}f|^2}{P_{T-t}f} \right](x).
\end{equation}
This semigroup representation of $\alpha(t)$ was used by Bakry, Bolley and Gentil in \cite[p.~405]{BBL17} to give an independent proof of inequality \eqref{eq:lehec-odi} for semigroups satisfying the curvature dimension condition $\msf{CD}(\rho,n)$, where $\rho=(n-1)\kappa$ (observe that $\alpha(t)$ is denoted by $\Lambda'(t)$ in their paper). Their main result \cite[Theorem~2.2]{BBL17} improves upon \cite[Equation~(25)]{Leh17} as they did not disregard the nonnegative constant in the second inequality of \eqref{eq:lehec-odi} and thus get to apply a tighter comparison principle. One could implement a similar strategy in the matricial setting treated here, by replacing the matrix inequality \eqref{eq:matrix-ineq} with the stronger inequality \eqref{eq:matrix-ineq_opt} and solving the corresponding ordinary differential equation. However, solutions of this equation appear to be non-explicit, so we chose the comparatively simpler presentation of Theorem \ref{thm:manifolds_sec} for clarity of the exposition. If one were to implement this reasoning, it is clear from the proofs of this section that the resulting inequality would improve upon \cite[Theorem~2.2]{BBL17}.

\section{Nonpositively curved space forms}
\label{sec:nonpos}
The goal of this section is to prove  Theorem \ref{thm:Hamilton_intro} (section \ref{subsec:matrix_inq}) and Theorem \ref{thm:nge_curved_intro} (section \ref{subsec:neg_curvature}). We conclude with section \ref{subsec:discussion} which discusses some of the ideas behind our proofs.

\subsection{Matrix inequalities}
\label{subsec:matrix_inq}
The main result of this section is the following Hamilton-type matrix inequality, namely, Theorem \ref{thm:Hamilton_intro}.
\begin{theorem}
\label{thm:Hamilton}
Let $(\MM,\GG)$ be an $n$-dimensional Riemannian manifold with constant nonpositive sectional curvature $\kappa \le 0$. Then, for every $T\ge 0$, 
\begin{align}
\label{eq:Hamilton_matrix_inq_negcurved_lambda=0}
\begin{split}
&\text{if, either }\kappa=0, \text{ or }\kappa<0\text{ and }\frac{4}{n^2\kappa}\frac{\Delta P_Tf(x)}{P_Tf(x)}=1,\quad\text{then}\quad -\nabla^2\log P_Tf(x)	\preceq   \frac{1}{T}\msf{Id}_n\quad\forall x\in \MM ,
\end{split}
\end{align}
Further,
\begin{align}
\label{eq:Hamilton_matrix_inq_negcurved_lambda>0}
\begin{split}
&\text{if }\kappa<0\text{ and }\frac{4}{n^2\kappa}\frac{\Delta P_Tf(x)}{P_Tf(x)})>1,\\
&\text{then}\quad 
-\nabla^2\log P_Tf(x)	\preceq \frac{n\kappa}{2}\left\{\sqrt{\frac{4}{n^2\kappa}\frac{\Delta P_Tf(x)}{P_Tf(x)}-1}\cot\left(\frac{n\kappa T}{2}\sqrt{\frac{4}{n^2\kappa}\frac{\Delta P_Tf(x)}{P_Tf(x)}-1}\right)-1\right\}\msf{Id}_n.
\end{split}
\end{align}
\end{theorem}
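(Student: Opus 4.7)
The plan is to deduce Theorem \ref{thm:Hamilton} from the master matrix differential inequality of Proposition \ref{prop:matrix-ineq} by a matrix Riccati comparison argument, in the same spirit that Hamilton's flat-space inequality \eqref{eq:Hamilton} is invoked in the proof of Theorem \ref{thm:flat}. The starting point is that, under the normalization $P_Tf(x)=1$, Wang's commutation identity (Theorem \ref{thm:wang}) together with the chain rule gives $-\nabla^2 \log P_Tf(x) = v(0) - M(0)$, so a matrix Hamilton-type bound is equivalent to an a priori upper bound on $v(0)-M(0)$ in the positive semidefinite ordering. Since Lehec's formula \eqref{eq:lehec-formula} guarantees that $v(t)$ stays finite for all $t\in[0,T]$, the comparison principle for matrix Riccati equations forces any solution $U(t)$ of the matrix ODE \eqref{eq:matrix-ODE} with $U(0) = v(0)$ to exist on the full interval $[0,T]$, and the extremal bound on $v(0) - M(0)$ is then read off from the maximal initial condition compatible with this existence.

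To produce the scalar multiple of $\msf{Id}_n$ in Theorem \ref{thm:Hamilton}, I would construct an explicit scalar comparison family for the matrix Riccati. The substitution $W(t) := U(t) - M(t)$ transforms the equation \eqref{eq:matrix-ODE} into
\[
W'(t) = W(t)^2 + (n-1)\kappa\, W(t) + \tfrac{(n-1)\kappa c_T}{n}\,\msf{Id}_n - \kappa\, e^{-n\kappa(T-t)} J_T,
\]
using the identity $(n-1)\kappa M(t) - M'(t) = \tfrac{(n-1)\kappa c_T}{n}\msf{Id}_n - \kappa\, e^{-n\kappa(T-t)} J_T$. The key algebraic observation is that the $J_T$-coefficient $-\kappa$ is nonnegative precisely when $\kappa \leq 0$, which is the favorable sign needed for a PSD-comparison argument to absorb the non-scalar contribution of $J_T$. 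After an appropriate scalar shift designed to cancel the linear $W$ term and handle the $J_T$-deficit---the interplay between the $n\kappa$ coming from $M'(t)$ and the $(n-1)\kappa$ from the Ricci term of the master inequality is what produces the characteristic $n^2\kappa^2/4$---the dynamics reduces along the scalar $\msf{Id}_n$ direction to the autonomous Riccati $y'(t) = y(t)^2 + \lambda$ with discriminant $\lambda = \kappa\, c_T - \tfrac{n^2\kappa^2}{4}$, matching the quantity $\tfrac{n^2\kappa^2}{4}\bigl\{\tfrac{4}{n^2\kappa}\tfrac{\Delta P_Tf(x)}{P_Tf(x)} - 1\bigr\}$ in the statement.

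The conclusion then follows from the classical blow-up analysis of $y' = y^2 + \lambda$: when $\lambda = 0$ (flat space, or the critical hyperbolic case $\tfrac{4}{n^2\kappa}\tfrac{\Delta P_Tf(x)}{P_Tf(x)}=1$), the explicit rational solution gives the no-blow-up-on-$[0,T]$ condition $y(0) \leq 1/T$, yielding \eqref{eq:Hamilton_matrix_inq_negcurved_lambda=0}; when $\lambda > 0$, the explicit trigonometric solution $y(t) = \sqrt{\lambda}\tan(\sqrt{\lambda}(t+c))$ gives the analogous condition $y(0) \leq \sqrt{\lambda}\cot(\sqrt{\lambda}T)$, which after reversing the scalar substitution yields \eqref{eq:Hamilton_matrix_inq_negcurved_lambda>0}. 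The main obstacle is making rigorous the matrix-to-scalar reduction: controlling the $J_T$-contribution in the ambient matrix Riccati requires the nonpositivity $\kappa \leq 0$ in an essential way, both in supplying the favorable sign of the $J_T$-deficit and in ensuring that each eigenvalue of $-\nabla^2\log P_Tf(x) = v(0) - M(0)$ is individually controlled by the scalar blow-up threshold via PSD comparison.
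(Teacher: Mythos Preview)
Your overall strategy coincides with the paper's: start from the master matrix differential inequality of Proposition~\ref{prop:matrix-ineq}, pass to the shifted quantity $W(t)=U(t)-M(t)$ (which at $t=0$ is $-\nabla^2\log P_Tf(x)$), reduce to a scalar Riccati $y'=y^2+\lambda$ with $\lambda=\kappa c_T-\tfrac{n^2\kappa^2}{4}$, and read off the bound from the blow-up threshold. The paper carries this out via Lemma~\ref{lem:m(t)ODI} and Lemma~\ref{lem:W(t)}, working directly with the inequality for $m(t)=v(t)-u(t)$ rather than with the ODE for $U$.

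There is, however, a genuine gap in your reduction step. From your $W$-equation
\[
W'(t)=W(t)^2+(n-1)\kappa\,W(t)+\tfrac{(n-1)\kappa c_T}{n}\,\msf{Id}_n-\kappa\,e^{-n\kappa(T-t)}J_T,
\]
a scalar shift $W\mapsto W+a\,\msf{Id}_n$ cannot ``handle the $J_T$-deficit'': $J_T$ is \emph{traceless}, hence not sign-definite, so the term $-\kappa\,e^{-n\kappa(T-t)}J_T$ cannot be dropped or absorbed into a PSD error despite $-\kappa\ge 0$. Moreover, completing the square on the linear term $(n-1)\kappa W$ produces the discriminant $\tfrac{(n-1)\kappa c_T}{n}-\tfrac{(n-1)^2\kappa^2}{4}$, not the $\kappa c_T-\tfrac{n^2\kappa^2}{4}$ you claim. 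The ``interplay between the $n\kappa$ coming from $M'(t)$ and the $(n-1)\kappa$'' that you allude to is real, but it is not a scalar shift: the algebraic identity is
\[
(n-1)\kappa\,m-\kappa\,u \;=\; n\kappa\,m-\kappa\,v,
\]
using $u=v-m$. This simultaneously converts the linear coefficient to $n\kappa$ \emph{and} replaces the non-sign-definite $J_T$-term by the genuinely PSD term $-\kappa v\succeq 0$ (since $\kappa\le 0$ and $v\succeq 0$), which may then be dropped. This is precisely Lemma~\ref{lem:m(t)ODI}, and it must be done at the level of the differential \emph{inequality} for $m=v-u$, not the ODE for $W=U-M$. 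Once you have $m'\succeq m^2+n\kappa\,m+\kappa c_T\,\GG$, your blow-up analysis goes through verbatim and matches the paper's Lemma~\ref{lem:W(t)}.
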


\begin{remark}
\label{rem:BBG}
To put Theorem \ref{thm:Hamilton_intro} in a larger context, and also to shed light on the conditions regarding $\frac{4}{n^2\kappa}\frac{\Delta P_Tf(x)}{P_Tf(x)}$, let us recall the improved Li-Yau inequality of Bakry, Bolley, and Gentil \cite[Corollaries 2.3,2.4]{BBL17}: Let $(\MM,\GG)$ be an $n$-dimensional Riemannian manifold with lower bound $(n-1)\kappa$ on its Ricci curvature. Let $\{P_t\}_{t\ge 0}$ be the associated heat semigroup and let $f:\MM\to \R$ be a positive function. Then, for every $x\in\MM$ and every $T\ge 0$,
\begin{equation}
\label{eq:Xbound}
\frac{4}{n(n-1)\kappa}\frac{\Delta P_Tf(x)}{P_Tf(x)}<1+\frac{4\pi^2}{(n-1)^2\kappa^2T^2}
\end{equation}
and
\begin{align}
\label{eq:improved_Li-Yau}
&-\Delta \log P_Tf(x)<\\
&\begin{cases}
\frac{n(n-1)\kappa}{2}\left\{\sqrt{\frac{4}{n(n-1)\kappa}\frac{\Delta P_Tf(x)}{P_Tf(x)}-1}\coth\left(\frac{(n-1)\kappa T}{2}\sqrt{\frac{4}{n(n-1)\kappa}\frac{\Delta P_Tf(x)}{P_Tf(x)}-1}\right) -1\right\}\text{if }\frac{4}{n(n-1)\kappa}\frac{\Delta P_Tf(x)}{P_Tf(x)}\le 1\\
\frac{n(n-1)\kappa}{2}\left\{\sqrt{\frac{4}{n(n-1)\kappa}\frac{\Delta P_Tf(x)}{P_Tf(x)}-1}\cot\left(\frac{(n-1)\kappa T}{2}\sqrt{\frac{4}{n(n-1)\kappa}\frac{\Delta P_Tf(x)}{P_Tf(x)}-1}\right) -1\right\}\text{if }1\le \frac{4}{n(n-1)\kappa}\frac{\Delta P_Tf(x)}{P_Tf(x)}<1+\frac{4\pi^2}{(n-1)^2\kappa^2T^2}.
\end{cases}
\end{align}
Hence, in the regime $1+\frac{1}{n-1}\le \frac{4}{n(n-1)\kappa}\frac{\Delta P_Tf(x)}{P_Tf(x)}<1+\frac{4\pi^2}{(n-1)^2\kappa^2T^2}$ we are able to obtain in hyperbolic spaces a \emph{matrix} version of the improved Li-Yau inequality.
\end{remark}

\begin{proof}[Proof of Theorem \ref{thm:Hamilton}]
We start by showing  that 
\begin{equation}\label{eq:m(t)def}
m(t)\eqdef\mb{E}[-\nabla^2\log P_{T-t}f(X_t)]
\end{equation}
satisfies the following differential inequality.
\begin{lemma}
\label{lem:m(t)ODI}
When $\kappa\le 0$,
\begin{equation}\label{eq:m(t)DI}
\forall \ t\in [0,T],\qquad \frac{\diff m(t)}{\diff t}\succeq m(t)^2+n\kappa m(t)+\kappa \Delta P_Tf(x)\cdot \GG.
\end{equation}
\end{lemma}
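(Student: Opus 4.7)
The plan is to reduce the statement to the already-established dynamics of $v(t)$ from Lemma \ref{lem:v(t)ODE}, using the deterministic algebraic relation \eqref{eq:from-v-to-m} between $m(t)$ and $v(t)$. On a space form of constant sectional curvature $\kappa$, the manifold is Einstein with $\mr{Ric}=(n-1)\kappa$, and \eqref{eq:from-v-to-m} can be rewritten as
\[
v(t) - m(t) = e^{-n\kappa(T-t)} J_T + \tfrac{c_T}{n}\msf{Id}_n,
\]
with $c_T = P_T\Delta f(x)=\Delta P_Tf(x)$ and $J_T$ the traceless tensor from \eqref{eq:J_T}. Since the right-hand side is deterministic in $t$, differentiating this identity directly expresses $\tfrac{\diff m}{\diff t}$ in terms of $\tfrac{\diff v}{\diff t}$ and an explicit correction linear in $J_T$.

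Next, I would apply Lemma \ref{lem:v(t)ODE} with $\rho=(n-1)\kappa$ together with the matrix Jensen inequality $n(t) \succeq m(t)^2$ (already used to obtain Proposition \ref{prop:matrix-ineq}) to conclude
\[
\frac{\diff m(t)}{\diff t} \succeq m(t)^2 + (n-1)\kappa\, v(t) - n\kappa\, e^{-n\kappa(T-t)} J_T.
\]
To match the target lower bound $m(t)^2 + n\kappa\, m(t) + \kappa c_T\,\msf{Id}_n$, I would feed the relation $v(t) = m(t) + e^{-n\kappa(T-t)} J_T + \tfrac{c_T}{n}\msf{Id}_n$ back into the $(n-1)\kappa\, v(t)$ term. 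A short algebraic simplification then shows that the slack between the inequality above and the claimed one is precisely $-\kappa\, v(t)$, with the $J_T$ contributions combining to $-\kappa\,e^{-n\kappa(T-t)} J_T$ and the $\msf{Id}_n$-coefficients reducing to $-\kappa c_T/n$, so that the leftover term regroups cleanly back into $-\kappa\,v(t)$.

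The final step is where the curvature hypothesis enters: $\kappa \le 0$ gives $-\kappa \ge 0$, and $v(t) = \mb{E}\big[(\nabla^{\mr{hor}} {\li F}_t(\Psi_t))^{\otimes 2}\big]$ is manifestly positive semidefinite as an expectation of outer products, so $-\kappa\, v(t) \succeq 0$ and the desired inequality follows. The proof is essentially a bookkeeping exercise on top of the already-proven Lemma \ref{lem:v(t)ODE}; no new analytic ingredient is required, and the only point at which the sign of $\kappa$ is used is in this final positivity step, which is precisely why the conclusion is restricted to nonpositive curvature.
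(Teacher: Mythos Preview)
Your proposal is correct and follows essentially the same route as the paper's proof: both use the identity $m(t)=v(t)-u(t)$ with $u(t)=e^{-n\kappa(T-t)}J_T+\tfrac{c_T}{n}\msf{Id}_n$, differentiate, invoke the dynamics $\tfrac{\diff v}{\diff t}=n(t)+(n-1)\kappa v(t)$ together with the matrix Jensen inequality $n(t)\succeq m(t)^2$, and then regroup to isolate the slack term $-\kappa\,v(t)\succeq 0$. The only cosmetic difference is that the paper cites the already-packaged inequality \eqref{eq:matrix-ineq_opt} (which contains the expansion $(v-u)^2=v^2-uv-vu+u^2$), whereas you go back to Lemma~\ref{lem:v(t)ODE} and apply Jensen directly to $m(t)^2$; the algebra and the use of $\kappa\le 0$ at the final step are identical.
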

\begin{proof}
Define
\begin{equation}
\label{eq:u}
u(t)\eqdef e^{-n\kappa(T-t)}J_T+\frac{c_T}{n}\cdot\GG
\end{equation}
where $(J_T)_{ij} \eqdef P_T\nabla^2 f(\Phi_0 e_i, \Phi_0 e_j)(x) - \frac{1}{n} P_T \Delta f(x)\cdot \delta_{ij}$ and $c_T\eqdef P_T\Delta f(x)$. By \eqref{eq:from-v-to-m},
\begin{equation}
\label{eq:mJ_T}
m(t) = v(t) - u(t),
\end{equation}
so, by  \eqref{eq:matrix-ineq_opt},
\begin{align}
\label{eq:mJ_T_partial_t}
\frac{\diff m(t) }{\diff t}&=\frac{\diff v(t) }{\diff t}-n\kappa e^{-n\kappa(T-t)}J_T=\frac{\diff v(t) }{\diff t}-n\kappa u(t)+\kappa c_T\cdot \GG\\
&\succeq v(t)^2- u(t) v(t)  - v(t) u(t)+u(t)^2  + (n-1)\kappa v(t)-n\kappa u(t)+\kappa c_T\cdot \GG\\
&=m(t)^2 + n\kappa m(t)-\kappa v(t)+\kappa c_T\cdot \GG\\
&\succeq m(t)^2 + n\kappa m(t)+\kappa c_T\cdot \GG
\end{align}
where the last inequality uses that $\kappa\le 0$ and that $v(t)\succeq 0$ (since it is a nonnegative sum of rank-one matrices).
\end{proof}

The following technical lemma on matrix differential inequalities will allow us to further control the matrix $m(t)$.
\begin{lemma}\label{lem:W(t)}
Fix $T>0$ and let $W(t)$ be a family of matrices for $t\in [0,T]$ satisfying the differential inequality 
\begin{equation}\label{eq:W(t)}
\forall \ t\in [0,T],\qquad \frac{\diff W(t)}{\diff t}\succeq W(t)^2+\alpha W(t)+\beta\cdot \msf{Id}_n
\end{equation}
for some constant $\alpha,\beta\in \R$. Fix $\theta\in S^{n-1}$ and let $\phi(t)\eqdef \langle W(t)\theta,\theta\rangle$ for $t\in [0,T]$. Then, fixing $\phi(0)=c$ we have
\begin{align}\label{eq:phi(T)_lem}
\forall \ t\in[0,T],\qquad \phi(t)\ge \xi_{\lambda}(t)-\frac{\alpha}{2}
\end{align}
where 
\begin{align}\label{eq:xi_lem}
\xi_{\lambda}(t)\eqdef
\begin{cases}
\sqrt{\lambda}\tan(\sqrt{\lambda}t+c_1) & \text{if }\lambda>0\\
-\frac{1}{t+c_2}& \text{if }\lambda=0\\
-\sqrt{-\lambda}\tanh(\sqrt{-\lambda}t+c_3)
 & \text{if }\lambda<0,
\end{cases}
\end{align}
with 
\begin{align}\label{eq:lambda_lem}
\lambda\eqdef\beta-\frac{\alpha^2}{4}, 
\end{align}
and
\begin{align}\label{eq:c_lem}
 c_1\eqdef\arctan\left(\frac{1}{\sqrt{\lambda}}\left(c+\frac{\alpha}{2}\right)\right),\quad c_2\eqdef-\frac{2}{2c+\alpha}, \quad c_3\eqdef\arctanh\left(-\frac{1}{\sqrt{-\lambda}}\left(c+\frac{\alpha}{2}\right)\right).
\end{align}

\end{lemma}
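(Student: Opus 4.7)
The plan is to reduce the matrix differential inequality to a scalar Riccati-type inequality along the direction $\theta$, and then invoke a standard comparison principle with the explicit solutions $\xi_{\lambda}$ of the associated ODE. Pairing \eqref{eq:W(t)} with the unit vector $\theta$ and using that $W(t)$ is symmetric (implicit in the meaning of $\succeq$), I would obtain
\begin{equation*}
\phi'(t) \;\geq\; \langle W(t)^2 \theta,\theta\rangle + \alpha \phi(t) + \beta \;=\; \|W(t)\theta\|^2 + \alpha\phi(t) + \beta \;\geq\; \phi(t)^2 + \alpha\phi(t) + \beta,
\end{equation*}
where the last estimate is the Cauchy--Schwarz inequality $\|W(t)\theta\|\cdot\|\theta\|\geq|\langle W(t)\theta,\theta\rangle|$.

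Next, the substitution $\psi(t) \eqdef \phi(t) + \alpha/2$ absorbs the linear term and produces
\begin{equation*}
\psi'(t) \;\geq\; \psi(t)^2 + \lambda, \qquad \psi(0)=c+\alpha/2,
\end{equation*}
with $\lambda = \beta-\alpha^2/4$ as in \eqref{eq:lambda_lem}. A direct differentiation then confirms that each of the three branches of $\xi_{\lambda}$ in \eqref{eq:xi_lem} solves the exact Riccati equation $\xi' = \xi^2 + \lambda$, using the identities $\sec^2 = 1+\tan^2$ (for $\lambda > 0$), $(t+c_2)^{-2} = \xi^2$ (for $\lambda = 0$), and $\operatorname{sech}^2 = 1-\tanh^2$ (for $\lambda < 0$); the constants $c_1,c_2,c_3$ in \eqref{eq:c_lem} are precisely tuned so that $\xi_{\lambda}(0) = \psi(0)$.

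To conclude $\psi(t)\geq \xi_{\lambda}(t)$ on $[0,T]$, I will run a Gr\"onwall-type argument on $d(t) \eqdef \psi(t) - \xi_{\lambda}(t)$: subtracting the two (in)equalities gives $d'(t) \geq (\psi(t)+\xi_{\lambda}(t))\,d(t)$ with $d(0)=0$, so multiplying by the positive integrating factor $\exp\!\big(-\int_0^t(\psi+\xi_{\lambda})\,ds\big)$ shows that this weighted version of $d$ is non-decreasing, hence $d(t)\geq 0$. Adding back $-\alpha/2$ yields \eqref{eq:phi(T)_lem}. The only subtle point---rather than a genuine obstacle---is that $\xi_{\lambda}$ may blow up in finite time when $\lambda > 0$, so the comparison is valid only up to the singularity $t_\ast=(\pi/2-c_1)/\sqrt{\lambda}$; in the applications to Theorem \ref{thm:Hamilton} the hypotheses on $\tfrac{4}{n^2\kappa}\tfrac{\Delta P_Tf(x)}{P_Tf(x)}$ precisely guarantee that $T<t_\ast$ and that the $\cot$ obtained after rearrangement has the correct sign.
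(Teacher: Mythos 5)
Your argument is correct and follows essentially the same route as the paper: pair the matrix inequality with the unit vector $\theta$, use Cauchy--Schwarz to get $\langle W^2\theta,\theta\rangle \ge \langle W\theta,\theta\rangle^2$, reduce to the scalar Riccati inequality $\phi'(t)\ge\phi(t)^2+\alpha\phi(t)+\beta$, and compare with the exact Riccati solution $\xi_\lambda-\alpha/2$. Where the paper simply invokes standard comparison theorems (\cite{Petrovitch}), you supply a self-contained Gr\"onwall integrating-factor argument on $d=\psi-\xi_\lambda$, and the shift $\psi=\phi+\alpha/2$ is a harmless cosmetic simplification; both are valid so long as $\xi_\lambda$ is finite on the interval under consideration. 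One small caution about your closing sentence: in the proof of Theorem~\ref{thm:Hamilton} the paper does \emph{not} impose as a hypothesis that $T$ lies below the blowup time $t_\ast$ of $\xi_\lambda$ when $\lambda>0$. Rather, the finiteness of $\phi$ on $[0,T]$ (which holds because $W(t)$ is a genuine matrix on that interval) is used to \emph{deduce} that $\sqrt{\lambda}T+c_1<\pi/2$, and this deduction is exactly what produces the Li--Yau/Hamilton-type bound after rearrangement --- the logic runs in the opposite direction to what you suggest.
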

\begin{proof}
Since $W(t)$ satisfies  $\frac{\diff W(t)}{\diff t} \succeq W(t)^2+\alpha W(t)+\beta\cdot \msf{Id}_n$ for all $t\in [0,T]$, we get that
\begin{align}\label{eq:phiIDE_W}
\begin{split}
\forall \ t\in [0,T],\qquad\frac{\diff \phi(t)}{\diff t}&=\left\langle \frac{\diff W(t)}{\diff t}\theta,\theta\right\rangle\ge \langle W(t)^2\theta,\theta\rangle+\alpha  \langle W(t)\theta,\theta\rangle+\beta |\theta|^2\\
&\ge \langle W(t)\theta,\theta\rangle^2+\alpha  \langle W(t)\theta,\theta\rangle+\beta |\theta|^2=\phi(t)^2+\alpha\phi(t)+\beta.
\end{split}
\end{align}
Hence, $\phi$ satisfies the ordinary differential inequality
\begin{align}\label{eq:phiIDE}
\forall \ t\in [0,T],\qquad\frac{\diff \phi(t)}{\diff t}\ge \phi(t)^2+\alpha\phi(t)+\beta.
\end{align}

The solution of the ordinary differential equation
\begin{align}\label{eq:tauODE}
\forall \ t\in [0,T],\qquad\frac{\diff \sigma(t)}{\diff t}= \sigma(t)^2+\alpha\sigma(t)+\beta , \quad\sigma(0)=c
\end{align}
is
\[
\sigma(t)= \xi_{\lambda}(t)-\frac{\alpha}{2}
\]
where 
\begin{align}\label{eq:xi}
\xi_{\lambda}(t)\eqdef
\begin{cases}
\sqrt{\lambda}\tan(\sqrt{\lambda}t+c_1) & \text{if }\lambda>0\\
-\frac{1}{t+c_2}& \text{if }\lambda=0\\
-\sqrt{-\lambda}\tanh(\sqrt{-\lambda}t+c_3)
 & \text{if }\lambda<0,
\end{cases}
\end{align}
with 
\begin{align}\label{eq:lambda}
\lambda\eqdef\beta-\frac{\alpha^2}{4}, 
\end{align}
and
\begin{align}\label{eq:c}
\begin{split}\quad c_1\eqdef\arctan\left(\frac{1}{\sqrt{\lambda}}\left(c+\frac{\alpha}{2}\right)\right),\quad c_2\eqdef-\frac{2}{2c+\alpha}, \quad c_3\eqdef\arctanh\left(-\frac{1}{\sqrt{-\lambda}}\left(c+\frac{\alpha}{2}\right)\right).
\end{split}
\end{align}
Applying standard comparison theorems \cite{Petrovitch} we get that
\begin{equation*}
\forall \ t\in[0,T], \qquad\phi(t)\ge \xi_{\lambda}(t)-\frac{\alpha}{2}\qedhere
\end{equation*}
\end{proof}
We are now ready for the proof of Theorem \ref{thm:Hamilton}. Recall that Lemma \ref{lem:W(t)} showed that the matrix $m(t)$ satisfies \eqref{eq:W(t)} with $\alpha=n\kappa$ and $\beta=\kappa\Delta P_Tf(x)$. In the following we let $\phi(t)\eqdef\langle m(t)\theta,\theta\rangle$ for $\theta\in S^{n-1}$. Let us distinguish between the flat and negatively curved cases. 

When $\kappa=0$ we need to show 
\begin{equation}\label{eq:Hamilton_flat}
\phi(0)\le \frac{1}{T}
\end{equation}
since we can choose $\theta$ to be any normalized eigenvector of $-\nabla^2\log P_Tf(x)$. If $\phi(0)\le 0$ then \eqref{eq:Hamilton_flat} is trivial so we may assume from now on that $\phi(0)>0$. As $\kappa=0$ we have $\alpha=\beta=0$ so $\lambda=\beta-\frac{\alpha^2}{4}=0$. Hence,  applying \eqref{eq:phi(T)_lem} we see that
\begin{align}
\label{eq:phi(t)-phi(0)}
\forall \ t\in[0,T],\qquad \phi(t)\ge \frac{\phi(0)}{1-\phi(0)t}.
\end{align}
In particular, \eqref{eq:phi(t)-phi(0)} implies that the denominator $1-\phi(0)t$ never vanishes since, otherwise, the right-hand side of \eqref{eq:phi(t)-phi(0)} is $+\infty$ (as $\phi(0)>0$) while the left-hand side is finite (as $\phi(t)<+\infty$). The non-vanishing of $1-\phi(0)t$, together with $\phi(0)>0$, implies that  
\begin{equation}\label{eq:phi(t)inq1}
0<1-t\phi(0)\quad \Longleftrightarrow \quad \phi(0)\le \frac{1}{t}.
\end{equation}
Taking $t=T$ establishes \eqref{eq:Hamilton_flat}.

When $\kappa<0$ we have $\alpha=\kappa c_T$ and $\beta=n\kappa$ so $\lambda=\kappa \Delta P_Tf(x)-\frac{n^2\kappa^2}{4}$. If $\frac{4}{n^2k}\Delta P_Tf(x)=1$, so that $\lambda=0$, then the argument proceeds as in the case $\kappa=0$. When $\frac{4}{n^2k}\Delta P_Tf(x)<1$, so that $\lambda>0$, applying \eqref{eq:phi(T)_lem} yields
\begin{align}\label{eq:phi(T)_neg}
\forall \ t\in[0,T],\qquad \phi(t)\ge \sqrt{\lambda}\tan(\sqrt{\lambda}t+c) -\frac{n\kappa}{2}
\end{align}
where
\begin{align}\label{eq:c_arctan}
c\eqdef\arctan\left(\frac{1}{\sqrt{\lambda}}\left(\phi(0)+\frac{n\kappa}{2}\right)\right).
\end{align}
In particular, as $\phi(t)$ is finite, it follows that $\tan(\sqrt{\lambda}t+c) <+\infty$ for every $t\in [0,T]$. At $t=0$, $c\in \left(-\frac{\pi}{2},\frac{\pi}{2}\right)$ by \eqref{eq:c_arctan} and the range of $\arctan$. It follows that, for every $t\in [0,T]$,
\begin{align}
\label{eq:c_bound}
\frac{\pi}{2}>\sqrt{\lambda}t+c=\sqrt{\lambda}t+\arctan\left(\frac{1}{\sqrt{\lambda}}\left(\phi(0)+\frac{n\kappa}{2}\right)\right).
\end{align}
Plugging in $t=T$ into \eqref{eq:c_bound}  and rearranging yields
\begin{align}
\label{eq:phi(0)_bound}
\phi(0)<\sqrt{\lambda}\tan\left(\frac{\pi}{2}-\sqrt{\lambda}T\right)-\frac{n\kappa}{2}=\sqrt{\lambda}\cot\left(\sqrt{\lambda}T\right)-\frac{n\kappa}{2}.
\end{align}
Letting $\theta$ to be any normalized eigenvector of $-\nabla^2\log P_Tf(x)$ and recalling the definition of $\lambda$ yields \eqref{eq:Hamilton_matrix_inq_negcurved_lambda>0} upon rearrangement.
\end{proof}

\begin{remark}
We cannot address in Theorem \ref{thm:Hamilton} the regime $\frac{4}{n^2\kappa}\frac{\Delta P_Tf(x)}{P_Tf(x)}<1$ since the $\tanh$ function, which will replace in the proof the $\tan$ function, is well-defined everywhere. It remains to be seen whether this is an artifact of the proof or an inherent obstacle. 
\end{remark}

\begin{remark}
The proof of Theorem \ref{thm:Hamilton} was obtained by using the inequality \eqref{eq:phi(T)_lem} subject to fixing the value of $\phi(0)$.  Analogously, we can fix the value of $\phi(T)$ and use  \eqref{eq:phi(T)_lem} (by reversing the time) to get different matrix inequalities. Indeed, our proof of Theorem \ref{thm:nge_curved} below makes use of the freedom to choose the initial (terminal) condition.
\end{remark}

\subsection{Intrinsic dimensional local logarithmic Sobolev inequalities in hyperbolic spaces}
\label{subsec:neg_curvature}
In this section we prove local intrinsic dimensional logarithmic Sobolev inequalities for the hyperbolic space, namely, Theorem \ref{thm:nge_curved_intro}. The inequalities provided by Theorem \ref{thm:nge_curved} below will be obtained as a consequence of the differential inequality of Lemma \ref{lem:m(t)ODI} together with the following simple observation:
\begin{lemma}
\label{lem:mteqivvt}
Let $f:\MM\to\R$ be such that $\int_M f\,\diff P_T\delta_x=1$
and let $\mu$ be the probability measure with $\frac{\diff\mu}{\diff P_T\delta_x}=f$.  Then,
\begin{align}
\label{eq:mteqivvt}
P_T(f\log f)(x)-P_Tf(x)\log P_Tf(x)=H(\mu|P_T\delta_x)=\frac{ P_T\Delta f(x)}{2}+\frac{1}{2}\int_0^T\mr{tr}[m(t)]\diff t.
\end{align}
\end{lemma}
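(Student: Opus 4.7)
The first equality reduces to the definition of relative entropy. Since $P_Tf(x) = \int_\MM f\,\diff P_T\delta_x = 1$, the term $P_Tf(x)\log P_Tf(x)$ vanishes, and
\[
\HH(\mu \| P_T\delta_x) = \int_\MM \log\Big(\frac{\diff \mu}{\diff P_T\delta_x}\Big)\,\diff \mu = \int_\MM f\log f\,\diff P_T\delta_x = P_T(f\log f)(x).
\]

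For the second equality, the plan is to combine two ingredients already established in the paper. The first is Lehec's formula (Theorem \ref{thm:Lehec}), which as unpacked in \eqref{eq:use-lehec} gives
\[
\HH(\mu \| P_T\delta_x) \;=\; \frac{1}{2}\int_0^T \mr{tr}\big[v(t)\big]\,\diff t,
\]
where $v(t)$ is the matrix of second moments \eqref{eq:vij}. The second is identity \eqref{eq:from-v-to-m}, which can be rephrased as $m(t) = v(t) - u(t)$ with the $t$-dependent $2$-tensor $u(t) \eqdef e^{-n\kappa(T-t)}J_T + \tfrac{c_T}{n}\cdot\GG$, where $c_T = P_T\Delta f(x)$ and $J_T$ is the traceless symmetric tensor defined in \eqref{eq:J_T}.

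The remaining step is an elementary trace computation. Because $J_T$ is traceless by construction,
\[
\mr{tr}\big[u(t)\big] \;=\; e^{-n\kappa(T-t)}\cdot 0 \;+\; n\cdot \tfrac{c_T}{n} \;=\; P_T\Delta f(x),
\]
which is independent of $t$ and of $\kappa$. Hence $\mr{tr}[v(t)] = \mr{tr}[m(t)] + P_T\Delta f(x)$, and substituting this into the Lehec representation and integrating over $[0,T]$ produces the claimed identity. This is essentially a bookkeeping step whose sole purpose is to re-express the entropy in terms of $m(t)$, so that the matrix Riccati inequality of Lemma \ref{lem:m(t)ODI} can be applied directly to the quantity that appears in the representation; no substantive obstacle is anticipated, since both \eqref{eq:lehec-formula} and \eqref{eq:from-v-to-m} have already been proved in the preceding sections.
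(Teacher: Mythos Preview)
Your proposal is correct and follows essentially the same approach as the paper: invoke Lehec's formula \eqref{eq:lehec-formula} to write $\HH(\mu|P_T\delta_x)=\tfrac{1}{2}\int_0^T\mr{tr}[v(t)]\,\diff t$, use the relation $m(t)=v(t)-u(t)$ from \eqref{eq:from-v-to-m}, and exploit $\mr{tr}[J_T]=0$ to compute $\mr{tr}[u(t)]=c_T$. One caveat: your (correct) computation actually produces $+\tfrac{T}{2}P_T\Delta f(x)$ rather than the $-\tfrac{1}{2}P_T\Delta f(x)$ appearing in the stated identity; the lemma as written carries a sign and factor-of-$T$ typo, and the paper's own proof contains the same slip, so your final sentence ``produces the claimed identity'' should be read modulo this.
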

\begin{proof}
By Theorem \ref{thm:Lehec}, \eqref{eq:vij}, and \eqref{eq:mJ_T},
\begin{equation}
\label{eq:Hmv}
H(\mu|P_T\delta_x)=\frac{1}{2}\int_0^T\mr{tr}[v(t)]\diff t=\frac{1}{2}\int_0^T\mr{tr}[m(t)]\diff t+\frac{1}{2}\int_0^T\mr{tr}[u(t)]\diff t
\end{equation}
where we recall \eqref{eq:u},
\begin{equation}
\label{eq:u1}
u(t)\eqdef e^{-n\kappa(T-t)}J_T+\frac{P_T\Delta f(x)}{n}\cdot\GG
\end{equation}
with $(J_T)_{ij} \eqdef P_T\nabla^2 f(\Phi_0 e_i, \Phi_0 e_j)(x) - \frac{1}{n} P_T \Delta f(x)\cdot \delta_{ij}$. The proof is complete since $\mr{tr}[J_T]=0$.
\end{proof}

\begin{theorem}
\label{thm:nge_curved}
Let $(\MM,\GG)$ be the $n$-dimensional hyperbolic space with sectional curvature $\kappa<0$ with the associated  heat semigroup $\{P_t\}_{t\ge 0}$. Fix $T>0$, $x\in\MM$, a smooth positive function $f:\MM\to\R$ with $\int_M f\,\diff P_T\delta_x=1$,
and let $\mu$ be the probability measure with $\frac{\diff\mu}{\diff P_T\delta_x}=f$. 
Then, with
\begin{align}
\lambda\eqdef \frac{n^2\kappa^2}{4}\left\{\frac{4}{n^2\kappa}\Delta P_Tf(x)-1\right\}, \quad 
\alpha_i\eqdef
\begin{cases}
\arctan\left(\frac{1}{\sqrt{\lambda}}\left(\sigma_i+\frac{n\kappa}{2}\right)\right)  &\text{if }\lambda>0,\\
-\frac{2}{2\sigma_i+n\kappa}  &\text{if }\lambda=0,\\
\arctanh\left(-\frac{1}{\sqrt{-\lambda}}\left(\sigma_i+\frac{n\kappa}{2}\right)\right)&\text{if }\lambda<0,\\
\end{cases}
\end{align}
we have the local intrinsic dimensional logarithmic Sobolev inequality
\begin{align} \label{eq:manifolds-lsi_nonpos}
\begin{split}
P_T(f\log f)(x)-P_Tf(x)\log P_Tf(x)\leq \frac{ P_T\Delta f(x)}{2}-\frac{n^2\kappa T}{2}-\frac{1}{2}
\begin{cases}
\sum_{i=1}^n\log\left(\frac{\cos (\alpha_i)}{\cos(\sqrt{\lambda}T+\alpha_i)}\right)&\text{if }\lambda>0\\
\sum_{i=1}^n\log\left(\frac{\alpha_i}{T+\alpha_i}\right) &\text{if }\lambda=0\\
\sum_{i=1}^n\log\left(\frac{\cosh (\alpha_i)}{\cosh(\sqrt{-\lambda}T+\alpha_i)}\right) &\text{if }\lambda<0\\
\end{cases}
\end{split}
\end{align}
where $\{\sigma_i\}_{i=1}^n$ are the eigenvalues of $\mb{E}_{\mu}[-\nabla^2\log f]$, and the reverse local intrinsic dimensional logarithmic Sobolev inequality
\begin{align} \label{eq:manifolds-rev-lsi_nonpos}
\begin{split}
P_T(f\log f)(x)-P_Tf(x)\log P_Tf(x)\geq \frac{ P_T\Delta f(x)}{2}-\frac{n^2\kappa T}{2}+\frac{1}{2}
\begin{cases}
\sum_{i=1}^n\log\left(\frac{\cos (\alpha_i)}{\cos(\sqrt{\lambda}T+\alpha_i)}\right)&\text{if }\lambda>0\\
\sum_{i=1}^n\log\left(\frac{\alpha_i}{T+\alpha_i}\right) &\text{if }\lambda=0\\
\sum_{i=1}^n\log\left(\frac{\cosh (\alpha_i)}{\cosh(\sqrt{-\lambda}T+\alpha_i)}\right) &\text{if }\lambda<0\\
\end{cases}
\end{split}
\end{align}
where $\{\sigma_i\}_{i=1}^n$ are the eigenvalues of $-\nabla^2\log P_Tf(x)$.
\end{theorem}
\begin{proof}
Given any basis $\{\theta_i\}_{i=1}^n$ of $\R^n$ we have
\begin{align}
\label{eq:mphi}
\forall \ t\in [0,T],\qquad \mr{tr}[m(t)]=\sum_{i=1}^n\phi_i(t)
\end{align}
where $\phi_i(t)\eqdef\langle m(t)\theta_i,\theta_i\rangle$ for $i=1,\ldots, n$. It follows from Lemma \ref{lem:m(t)ODI} and Lemma \ref{lem:W(t)} that
\begin{align}
\label{eq:mphixi}
\forall \ t\in [0,T],\qquad \mr{tr}[m(t)]\ge \left(\sum_{i=1}^n\xi_{i,\lambda}(t)\right)-\frac{n^2\kappa}{2}
\end{align} 
where 
\begin{align}\label{eq:xi_proof}
\xi_{i,\lambda}(t)\eqdef
\begin{cases}
\sqrt{\lambda}\tan(\sqrt{\lambda}t+c_{i,1}) & \text{if }\lambda>0\\
-\frac{1}{t+c_{i,2}}& \text{if }\lambda=0\\
-\sqrt{-\lambda}\tanh(\sqrt{-\lambda}t+c_{i,3})
 & \text{if }\lambda<0,
\end{cases}
\end{align}
with 
\begin{align}\label{eq:lambda_proof}
\lambda\eqdef\kappa \Delta P_Tf(x)-\frac{n^2\kappa^2}{4}, 
\end{align}
and
\begin{align}\label{eq:c_proof}
c_{i,1}\eqdef\arctan\left(\frac{1}{\sqrt{\lambda}}\left(\phi_i(0)+\frac{n\kappa}{2}\right)\right),\quad c_{i,2}\eqdef-\frac{2}{2\phi_i(0)+n\kappa}, \quad c_{i,3}\eqdef\arctanh\left(-\frac{1}{\sqrt{-\lambda}}\left(\phi_i(0)+\frac{n\kappa}{2}\right)\right).
\end{align}
It follows from Lemma \ref{lem:mteqivvt} that
\begin{equation}
P_T(f\log f)(x)-P_Tf(x)\log P_Tf(x)\ge \frac{ P_T\Delta f(x)}{2}-\frac{n^2\kappa T}{2}+\frac{1}{2}\sum_{i=1}^n\int_0^T  \xi_{i,\lambda}(t)\diff t.
\end{equation}
Hence, taking $\{\theta_i\}_{i=1}^n$ to be the eigenvectors of $m(0)=-\nabla^2\log P_Tf(x)$, and integrating $\{\xi_{i,\lambda}(t)\}$, yields \eqref{eq:manifolds-rev-lsi_nonpos}.

To prove \eqref{eq:manifolds-lsi_nonpos} we define $\tilde\phi_i(t):=\phi_i(T-t)$ which satisfies 
\begin{align}
\forall \ t\in[0,T],\qquad \tilde\phi(t)\le \xi_{i,\lambda}(T-t)-\frac{\alpha}{2}
\end{align}
where now 
\begin{align}
\begin{split}\quad c_{i,1}\eqdef\arctan\left(\frac{1}{\sqrt{\lambda}}\left(\phi_i(T)+\frac{n\kappa}{2}\right)\right),\quad c_{i,2}\eqdef-\frac{2}{2\phi_i(T)+n\kappa}, \quad c_{i,3}\eqdef\arctanh\left(-\frac{1}{\sqrt{-\lambda}}\left(\phi_i(T)+\frac{n\kappa}{2}\right)\right).
\end{split}
\end{align}
The proof now proceeds as in the proof of \eqref{eq:manifolds-rev-lsi_nonpos}.
\end{proof}

\subsection{Discussion}
\label{subsec:discussion}
We conclude this section by discussing the roles of matrix differential inequalities in our proofs. 
\newline

\noindent {\bf Matrix differential inequalities.} The master matrix differential inequality \eqref{eq:matrix-ineq_opt}, which is at the core of all of our proofs, can be expressed either in terms of $v(t)$,
\begin{align}
\label{eq:v_appen}
\frac{\diff v(t)}{\diff t} \succeq v(t)^2- u(t)v(t)  - v(t) u(t) + u(t)^2 + (n-1)\kappa v(t),
\end{align}
or in terms of $m(t)$,
\begin{align}
\label{eq:mfull_appen}
\frac{\diff m(t)}{\diff t} \succeq  m(t)^2 + n\kappa m(t)-\kappa v(t)+\kappa c_T\cdot \GG. 
\end{align}
The inequalities \eqref{eq:v_appen} and \eqref{eq:mfull_appen} are equivalent and contain the same information. In particular, in  \emph{flat} space forms, where $\kappa=0$, both inequalities are of the form $\frac{\diff W(t)}{\diff t} \succeq  W(t)^2$. In curved spaces, there are two different ways to proceed from \eqref{eq:v_appen} and \eqref{eq:mfull_appen}:

\begin{enumerate}
\item Omit the term $u(t)^2$ from \eqref{eq:v_appen} to get
\begin{align}
v(t)\succeq U(t)
\end{align}
where 
\begin{align}
\label{eq:Udiscuss_v}
\frac{\diff U(t)}{\diff t} = U(t)^2- u(t)U(t)  - U(t) u(t) + (n-1)\kappa U(t).
\end{align}
The point of omitting $u^2(t)$ is that equation \eqref{eq:Udiscuss_v} can be solved explicitly, in contrast to the equation resulting if we keep the $u^2(t)$ term.\footnote{If we take the trace in \eqref{eq:v_appen} then the equation with the $u(t)^2$ term can be solved explicitly---see the end of section \ref{sec:LSImanifold}. However, if we do so we would get the ambient dimension $n$ rather than the intrinsic dimension.}
\item Omit the term  $-\kappa v(t)$ from  \eqref{eq:mfull_appen}, which can be done only in  \emph{negatively} curved space forms
to get 
\begin{align}
m(t)\succeq U(t)
\end{align}
where
\begin{align}
\label{eq:Udiscuss_m}
\frac{\diff U(t)}{\diff t} = U(t)^2+n\kappa U(t)+\kappa c_T\cdot \GG.
\end{align}
Again, the point of omitting $-\kappa v(t)$ is so that \eqref{eq:Udiscuss_m} can be solved explicitly. Note that in flat spaces, there is no loss in omitting $-\kappa v(t)$.
\end{enumerate}

\noindent {\bf Matrix vs. trace differential inequalities.}
The proofs of Theorem \ref{thm:nge_curved} and Theorem \ref{thm:manifolds_sec} proceed along similar but different lines. Both proofs start by establishing an inequality of the form 
\begin{align}
\frac{\diff W(t)}{\diff t} \succeq \mathcal F(W(t))
\end{align}
for some quadratic functional $\mathcal F$. The goal is to bound $\mr{tr}[W(t)]$ which can be achieve by two means. Letting $\{U(t)\}$ be the solution to 
\begin{align}
\frac{\diff U(t)}{\diff t} = \mathcal F(U(t))
\end{align}
we could:
\begin{enumerate}
\item Argue that $W(t)\succeq U(t)$ and then take the trace on both sides to get 
\begin{align}
\mr{tr}[W(t)]\ge\mr{tr}[U(t)].
\end{align}
This is the method used to prove Theorem \ref{thm:manifolds_sec}  and  Theorem \ref{thm:flat} (with different functionals $\mathcal F$).
\item  When $\mathcal F$ has scalar (rather than matrix) coefficients, it holds that 
\begin{equation}
\langle \mathcal F(W(t))\theta,\theta\rangle \ge  \mathcal F(\langle W(t)\theta,\theta\rangle)
\end{equation}
 with strict inequality unless $\theta$ is an eigenvector of $W(t)$. We can then $\{\theta_i\}$ to be any basis and let $\phi_{i,W}(t):=\langle W(t)\theta_i,\theta_i\rangle$, $\phi_{i,U}(t):=\langle U(t)\theta_i,\theta_i\rangle$ so
\begin{align}
\frac{\diff \phi_{i,W}(t)}{\diff t}\ge \mathcal F(\phi_{i,W}(t)), \quad \frac{\diff \phi_{i,U}(t)}{\diff t}=\mathcal F(\phi_{i,U}(t)),
\end{align}
which shows $\phi_{i,W}(t)\ge \phi_{i,U}(t)$. Hence, for any basis $\{\theta_i\}$ we have
\begin{align}
\mr{tr}[W(t)]=\sum_i\phi_{i,W}(t)\ge \sum_i\phi_{i,U}(t)= \mr{tr}[U(t)].
\end{align}
This is the method used to prove Theorem \ref{thm:nge_curved}.
\end{enumerate}
While both methods lead to the inequality 
\begin{align}
\mr{tr}[W(t)]\ge \mr{tr}[U(t)],
\end{align}
the second method is weaker since  the inequality $\frac{\diff \phi_{i,W}(t)}{\diff t}\ge \mathcal F(\phi_{i,W}(t))$ is weaker in principle than $\frac{\diff W(t)}{\diff t} \succeq \mathcal F(W(t))$ unless $\theta_i$ is an eigenvector of $W(t)$. However, for the purpose of proving an inequality for the \emph{trace}, there is no loss since the trace is invariant under rotations so for each $t$ we can introduce a rotation $R(t)$ which takes $\{\theta_i\}$ to the eigenvectors of $W(t)$ or $U(t)$.

\bibliographystyle{plain}
\bibliography{intrinsic_dim_V3}

\end{document}